\DeclareMathOperator{\vol}{vol}
\DeclareMathOperator{\sign}{sign}
\DeclareMathOperator{\aff}{aff}
\DeclareMathOperator{\ch}{conv}
\DeclareMathOperator*{\argmin}{arg\,min}
\newcommand{\df}{:=}
\newcommand{\fd}{=:}
\newcommand{\indicf}[1]{\mathbf{1}_{#1}}
\newcommand{\subst}[3]{\,\tensor*[^{#3}_{#2}]{{#1}}{}}
\newcommand{\scal}[2]{\left\langle{#1},{#2}\right\rangle}
\newcommand{\M}[2]{M({#1}\mid {#2})}
\newcommand{\detp}[1]{\det(#1)}
\newcommand{\dets}[3]{\det(\subst{#1}{#2}{#3})}
\newcommand{\blank}{{\,\cdot\,}}
\newcommand{\dnorm}[1]{\left\Vert#1\right\Vert}
\newcommand{\norm}[1]{\left|#1\right|}
\newcommand{\multichoose}[2]{
\left.\mathchoice
  {\left(\kern-0.48em\binom{#1}{#2}\kern-0.48em\right)}
  {\big(\kern-0.30em\binom{\smash{#1}}{\smash{#2}}\kern-0.30em\big)}
  {\left(\kern-0.30em\binom{\smash{#1}}{\smash{#2}}\kern-0.30em\right)}
  {\left(\kern-0.30em\binom{\smash{#1}}{\smash{#2}}\kern-0.30em\right)}
\right.}
  \thanks[inria]{Inria Bordeaux Sud-Ouest, Project-Team Makutu, Inria - E2S UPPA-CNRS, Pau, France}%
\thanks[total]{Total S.E., Centre Scientifique et Technique Jean Féger, Pau, France}%
\thanks{Corresponding author}}
\newtheorem{theorem}{Theorem}[section]
\newtheorem{lemma}[theorem]{Lemma}
\newtheorem{proposition}[theorem]{Proposition}
\newtheorem{corollary}[theorem]{Corollary}
\newtheorem{definition}[theorem]{Definition}
\newtheorem{remark}[theorem]{Remark}
\numberwithin{equation}{section}
\begin{document}

\makeRR   

\setcounter{tocdepth}{2}
\section{Introduction}
\label{sec:intro}
Curves and surfaces based on piecewise-polynomial Bézier and B-spline functions \cite{piegl2012nurbs,prautzsch2013bezier} have long been invaluable tools in computer-aided design, computer graphics, machining and fabrication and, more recently, numerical analysis of partial differential equations \cite{hughes2005isogeometric}. The feature of reproducing all the polynomials over an interval up to a given degree underpins their use as approximation and interpolation tools. In one dimension, many robust and efficient evaluation schemes have become available to efficiently construct and evaluate these families of functions. In two or more dimensions, spline functions can be constructed via tensor products of one-dimensional B-splines, but this structure can be too rigid in some applications. For this reason, much work has gone into the direct generalization of B-spline functions to a multivariate setting. While natural generalizations of single B-spline functions have been found \cite{neamtu2001natural}, current state-of-the-art approaches for unstructured splines are still somewhat lacking: the main construction algorithm \cite{liu2007quadratic,liu2008computations} is only proven to work in two dimensions, and has only recently \cite{schmitt2019bivariate} been shown to converge for all degrees. Moreover, the current formulations fall short of treating the case of repeated and affinely dependent knots, which is needed in many practical applications. No simple and general evaluation scheme is known for multivariate spline spaces.

In this work, we set out to improve on some of these shortcomings by showing how these bases can be recast in a more general combinatorial form, paving the way for their use in efficient numerical schemes. We base our formulation on a connection between simplex spline spaces and fine zonotopal tilings, whose combinatorial nature allows a unified treatment free of the degenerate configurations that are typical of a purely geometrical approach. Furthermore, these structures come equipped with a natural adjacency graph, which can be used to navigate between splines in a basis and extend some aspects of the classical one-dimensional evaluation scheme to higher dimensions. This removes, in our view, one important computational shortcoming that prevented a more widespread use of these functions. 

Finally, note that some (unrelated) connections between zonotopal tilings and box splines have been drawn in the past, see e.g. \cite{de2010topics}. 

\subsection{Notation}
We adopt some standard notation from combinatorial geometry. Specifically, given $n\in\mathbb{Z}^+$, we define the range $[n]\df\{1,\ldots,n\}$. 
The union between two disjoint sets $R$ and $S$ is denoted by $R\sqcup S$. Note that $\norm{R\sqcup S}=\norm{R}+\norm{S}$, where $\norm{\blank}$ denotes the cardinality of a set. We also borrow some convenient notation from \cite{neamtu2007delaunay}. In particular, given a configuration of $n\geq d+1$ points $A\df(a_1,\ldots,a_n)$ in $\mathbb{R}^d$ and a set of indices $I\subseteq [n]$ such that the points $(a_i)_{i\in I}$ are affinely independent, we denote by $\detp{I}$ the $(d+1)\times (d+1)$ determinant $\det((a_i,1)_{i\in I})$, with the rows ordered so that $\detp{I}>0$. Similarly, we denote by $\dets{I}{j}{k}$ the result of replacing the row corresponding to $(a_j,1)$ in $\detp{I}$ with $(a_k,1)$ in the same position. Notice that $\dets{I}{j}{k}$ is not necessarily positive. Similarly, for $x\in\mathbb{R}^d$, $\dets{I}{j}{x}$ is obtained by replacing the row $(a_j,1)$ in $\detp{I}$ with $(x,1)$. 

Let now $\mathcal{R}(A)\subset\mathbb{R}^d$ be any region obtained as the union of convex hulls of subsets of points in $A$. A {\em subdivision} $\mathcal{T}$ of $\mathcal{R}(A)$ is a collection of $d$-dimensional polytopes $\Delta$ with vertices in $A$ such that $\bigcup_{\Delta\in\mathcal{T}}\Delta=\mathcal{R}(A)$ and any two polytopes in $\mathcal{T}$ share a common face, possibly empty. If all the polytopes are simplices, then $\mathcal{T}$ is a {\em triangulation} of $\mathcal{R}(A)$.

\subsection{Multivariate splines}
\label{sec:back:splines}
Multivariate (unstructured) spline functions were introduced by Curry and Schoenberg \cite{curry1966polya}. The following useful recurrence formula was first derived by Micchelli \cite{micchelli1980constructive}. Given a configuration $A=(a_1,\ldots,a_n)$ of points in $\mathbb{R}^d$ and a subset $X\subseteq[n]$ of size $\norm{X}=k+d+1$, the normalized multivariate spline function $\M{x}{(a_i)_{i\in X}}$ can be defined for $x\in\mathbb{R}^d$ via the recursive expression
\begin{subequations}
\label{eqn:splinerec}
\begin{empheq}[left=\!\!\!\M{x}{(a_i)_{i\in X}}\df\empheqlbrace\,]{alignat=2}
&\dfrac{d!}{\detp{X}}\indicf{X}(x)&\;\mbox{ if }k=0,\label{eqn:splinerec-1}\\
&\frac{k+d}{k}\sum_{b\in B}\frac{\dets{B}{b}{x}}{\detp{B}}\M{x}{(a_i)_{i\in B\setminus\{b\}}}&\;\mbox{ otherwise},
\label{eqn:splinerec-2}
\end{empheq}
\end{subequations}
\noindent where $\indicf{X}(x)\df\indicf{\ch(\{a_i\}_{i\in X})}(x)$ is the indicator function of the convex hull of the points indexed by $X$, and $B$ is any subset $B\subseteq X$ with $\norm{B}=d+1$ such that the points $(a_b)_{b\in B}$ are affinely independent. If no such $B$ exists, then the affine rank of the points indexed by $X$ is less than $d+1$ and the spline, supported on a zero-measure set, is set to zero everywhere by continuity. The functions $\M{x}{\blank}$ are multivariate piecewise-polynomial functions of $x\in\mathbb{R}^d$ with regularity $C^{k-1}$ if all the points are affinely independent, and with reduced regularity otherwise.
Another useful expression, also derived in \cite{micchelli1980constructive}, is the {\em knot insertion} formula. If $\norm{X}\geq d+2$ (i.e., if $k\geq 1$), we can select another index $c\in X\setminus B$. We then have
\begin{equation}
\label{eqn:splinepar}
   \detp{B}\M{x}{(a_i)_{i\in X\setminus\{c\}}}= \sum_{b\in B}\dets{B}{b}{c}\M{x}{(a_i)_{i\in X\setminus\{b\}}}.
\end{equation}
Just like \eqref{eqn:splinerec-2} relates splines of order $k$ and $k-1$, allowing for a recurrent evaluation scheme, \eqref{eqn:splinepar} relates splines with the same order $k-1$. 

\subsection{Vector configurations and zonotopal tilings}
\label{sec:back:om}
We refer the reader to \cite{richter1994zonotopal} or \cite[Chapter~6]{ziegler2012lectures} for a thorough introduction to these combinatorial objects.

Let $A=(a_1,\ldots,a_n)$ be a configuration of points $a_i\in\mathbb{R}^d$, not necessarily affinely independent or even distinct, but which affinely span $\mathbb{R}^d$. For each point $a_i$, define its {\em projective lift} as $v_i\df (a_i,1)\in\mathbb{R}^{d+1}$, and let $V\df (v_1,\ldots,v_n)$ be the associated {\em vector configuration}.

Given two subsets $P$,~$Q\subset\mathbb{R}^d$, their {\em Minkowski sum} is defined as the set $P+Q\df \{x+y\in\mathbb{R}^d:\,x\in P,\,y\in Q\}$. The Minkowski sum of a set of segments is a special convex polytope known as a {\em zonotope}. There is a natural zonotope $Z(V)\subset\mathbb{R}^{d+1}$ associated to each point configuration $V$, defined as follows. For every index $i\in [n]$, define the segment $[0,v_i]\df\{\alpha_i v_i\in\mathbb{R}^{d+1}:0\leq\alpha_i\leq 1\}$. Then $Z(V)$ is given by the Minkowski sum
\begin{equation}
\label{eqn:zonodef}
    Z(V)\df\sum_{i=1}^n [0,v_i]
\end{equation}
Given two subsets of indices $I$,~$B\subseteq [n]$ with $I\cap B=\varnothing$, $\norm{B}=d+1$ and $\detp{B}>0$, the parallelepiped $\Pi_{I,B}\subset\mathbb{R}^{d+1}$ is defined as
\begin{equation}
\label{eqn:zonotile}
    \Pi_{I,B}\df\sum_{i\in I}v_i+\sum_{b\in B}[0,v_b]
\end{equation}
Notice that the $(d+1)$-dimensional volume of the tile $\vol^{d+1}(\Pi_{I,B})$ is equal to $\det(B)$, and that only $B$ determines the shape of $\Pi_{I,B}$, while $I$ simply shifts its position. A collection $\mathcal{P}$ of parallelepipeds $\Pi_{I,B}$ forming a polyhedral subdivision of $Z(V)$ is known as a {\em fine zonotopal tiling} of $Z(V)$ (see  \cite{bjorner1999oriented} or \cite[Chapter~6]{ziegler2012lectures}). An example is shown in Figure \ref{fig:zonotope}. Notice that the set $I$ of each tile $\Pi_{I,B}$ can be read off as the set of vectors in any shortest path connecting the origin to the base of the tile. In the present work, we call $\norm{I}$ the {\em order} of the tile $\Pi_{I,B}$, and we denote by $\mathcal{P}^{(k)}$ for any integer $k\geq 0$ the subset $\{\Pi_{I,B}\in\mathcal{P}:\norm{I}=k\}$.

The faces of a tile $\Pi_{I,B}$ are themselves parallelepipeds that are obtained by setting $\alpha_i$ equal to $0$ or $1$ in some of the segments $[0,v_b]$ of \eqref{eqn:zonotile}. Clearly, if $\Pi_{J,C}$ is a face of $\Pi_{I,B}$ then $C\subseteq B$ and $I\subseteq J\subseteq I\sqcup B$. If $\norm{C}=d$ then $\Pi_{J,C}$ is called a {\em facet} of $\Pi_{I,B}$. Since $\mathcal{P}$ is a subdivision, a facet is either shared between exactly two tiles of $\mathcal{P}$, or is an external facet of $Z(V)$. It is easily checked that two tiles $\Pi_{I,B}$ and $\Pi_{I^\prime,B^\prime}$ share a facet if and only if there are two indices $b\in B$, $b^\prime\in B^\prime$ such that $B\setminus\{b\}=B^\prime\setminus
\{b^\prime\}=B\cap B^\prime$ and either $I=I^\prime$, $I=I^\prime\sqcup\{b^\prime\}$, $I^\prime=I\sqcup\{b\}$ or $I\sqcup\{b\}=I^\prime\sqcup\{b^\prime\}$. The shared facet $\Pi_{J,C}$ then satisfies $C=B\cap B^\prime$ and $J=I\cup I^\prime$. 

Fine zonotopal tilings possess a number of remarkable properties. First, all such tilings of $Z(V)$ are simply different arrangements of the same set of tile shapes.
\begin{theorem}[Shephard \cite{shephard1974combinatorial}]
\label{thm:zonsub}
Every zonotope $Z(V)$ admits a fine zonotopal tiling, and all fine zonotopal tilings of $Z(V)$ have the same number of tiles, namely one full-dimensional tile for each maximal linearly independent subset of $V$.
\end{theorem}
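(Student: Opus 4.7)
The plan is to split the theorem into its two assertions: the \emph{existence} of a fine zonotopal tiling, and the \emph{invariance} of the tile count.

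For existence, I would use the classical \emph{generic lift} construction. Choose a generic height vector $\omega=(\omega_1,\ldots,\omega_n)\in\mathbb{R}^n$ and form the lifted vectors $\tilde v_i\df(v_i,\omega_i)\in\mathbb{R}^{d+2}$. The zonotope $Z(V)$ is the image of $Z(\tilde V)\subset\mathbb{R}^{d+2}$ under the projection $\pi$ that deletes the last coordinate. Those facets of $Z(\tilde V)$ whose outer normal has negative last coordinate (the lower envelope of $Z(\tilde V)$) project injectively onto $Z(V)$ and, by genericity of $\omega$, cover it without full-dimensional overlaps. Each such facet is a full-dimensional parallelepiped of $Z(\tilde V)$ generated by a basis of $\tilde V$; its image under $\pi$ is of the form $\Pi_{I,B}$ with $B\subset V$ a maximal linearly independent subset and $I$ determined by the combinatorics of the lift. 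Checking the subdivision condition along shared facets of the lower envelope then yields a fine zonotopal tiling of $Z(V)$, which moreover will be the prototypical example of a \emph{regular} tiling studied later in the paper.

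For the count I would combine a volume identity with a combinatorial uniqueness argument. On the one hand, the Minkowski decomposition $Z(V)=Z(V\setminus\{v_n\})+[0,v_n]$ yields by induction on $n$ the Shephard--McMullen formula $\vol^{d+1}(Z(V))=\sum_{B\text{ basis}}\detp{B}$. On the other hand, any fine zonotopal tiling $\mathcal{P}$ of $Z(V)$ gives $\vol^{d+1}(Z(V))=\sum_{\Pi_{I,B}\in\mathcal{P}}\detp{B}$, since the tiles have pairwise disjoint interiors and $\vol^{d+1}(\Pi_{I,B})=\detp{B}$. Equating yields $\sum_{\Pi_{I,B}\in\mathcal{P}}\detp{B}=\sum_{B}\detp{B}$. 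To upgrade this into a bijection between tiles and bases, I would argue that each basis $B$ appears \emph{at most} once as the base of a tile in $\mathcal{P}$, exploiting the adjacency relations recalled in the excerpt: the translation set $I$ of a tile can be read off from any shortest lattice path in the $1$-skeleton of $\mathcal{P}$ from $0$ to the base vertex of the tile, and such a path is globally determined by the oriented-matroid structure of the tiling, so two tiles sharing the same $B$ would be forced to occupy the same position. Combined with the volume equality this forces each basis to occur exactly once.

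The existence step is standard; the real obstacle is the uniqueness half of the counting argument, where one must rule out that two distinct tiles of $\mathcal{P}$ share the same base $B$. Controlling the shortest paths globally is where the Bohne--Dress theorem and related oriented-matroid machinery would usually intervene, and I expect this to be the most delicate part of the write-up. A safer fallback, should the combinatorial argument become cumbersome, is to first perturb $V$ to a configuration in sufficiently general position that the numbers $\{\detp{B}\}_B$ are $\mathbb{Q}$-linearly independent; the volume identity then forces each basis to contribute exactly one tile, and this count is transported back to the original $V$ by observing that the number of bases and the combinatorial type of a fine zonotopal tiling are invariant under sufficiently small deformations of $V$.
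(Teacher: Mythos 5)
The paper does not actually prove this statement: it is imported verbatim from Shephard's 1974 paper as a black box, so there is no internal proof to compare yours against. Judged on its own terms, your existence half is the standard generic-lift construction and is sound in outline; it is essentially the same mechanism the paper itself uses later to build \emph{regular} tilings (Theorem 3.3, citing Billera--Sturmfels), and your observation that vertical facets of $Z(\tilde V)$ are excluded from the lower envelope is what guarantees that each projected tile has a genuinely independent base $B$.

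The counting half, however, has a genuine gap, and it sits exactly where you suspect. The volume identity $\sum_{\Pi_{I,B}\in\mathcal{P}}\detp{B}=\sum_{B\ \mathrm{basis}}\detp{B}$ is fine, but it only yields the theorem once you know each basis occurs \emph{at most} once as the base of a tile, and neither of your two routes establishes this. The shortest-path remark is not an argument: two tiles $\Pi_{I_1,B}$ and $\Pi_{I_2,B}$ with the same base but $I_1\neq I_2$ would simply sit at different positions with different shortest paths, and nothing you have written rules them out; making this precise really does require the Bohne--Dress correspondence between fine tilings and single-element liftings of the oriented matroid, which is far heavier than the rest of your write-up and should be cited rather than gestured at. The perturbation fallback fails precisely in the degenerate situations this paper is designed to handle. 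If $v_1=v_2$ (repeated knots), then $\detp{\{1\}\sqcup C}=\detp{\{2\}\sqcup C}$ identically, so no perturbation preserving the combinatorics can render the basis determinants $\mathbb{Q}$-linearly independent; a perturbation breaking such identities, or turning a degenerate $(d{+}1)$-subset into a basis, changes the set of bases (which is \emph{not} invariant under small deformations --- it can only grow) and there is no reason the given, possibly non-regular, fine tiling of $Z(V)$ deforms into a fine tiling of the perturbed zonotope. Either cite Shephard or Bohne--Dress for the uniqueness of the count, as the paper does, or supply an actual combinatorial proof that each basis indexes exactly one tile --- for instance via a careful analysis of the deletion operation of Lemma 1.2 restricted to $Q=[n]\setminus B$, where one must still rule out pairs of tiles of the form $\Pi_{I,B}$ and $\Pi_{I\sqcup\{i\},B}$ collapsing to the same induced tile.
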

Moreover, one can remove a point $a_i$ corresponding to an index $i\in [n]$ from $A$ and consider the corresponding zonotope $Z(V\setminus\{v_i\})$. Then, any tiling $\mathcal{P}$ of $Z(V)$ induces a tiling $\mathcal{P}_{[n]\setminus\{i\}}$ of $Z(V\setminus\{v_i\})$, or indeed of any zonotope built on a subset of $V$, as follows. 
\begin{lemma}
\label{lem:inducedtil}Let $\mathcal{P}$ be a fine zonotopal tiling of $Z(V)$. Then:
\begin{equation}
\label{eqn:indtil1}
    \mathcal{P}_{[n]\setminus\{i\}}\df \left\{\Pi_{I,B}\in\mathcal{P}:i\not\in I\sqcup B\right\}\sqcup\left\{\Pi_{I\setminus\{i\},B}:\Pi_{I,B}\in\mathcal{P},\,i\in I\right\}
\end{equation}
is a fine zonotopal tiling of $Z(V\setminus \{v_i\})$. Similarly, for any $Q\subseteq [n]$,
\begin{equation}
\label{eqn:indtil2}
    \mathcal{P}_{[n]\setminus Q}\df \left\{\Pi_{I\setminus Q,B}:\Pi_{I,B}\in\mathcal{P},\,B\cap Q=\varnothing\right\},
\end{equation}
is a fine zonotopal tiling of $Z(V\setminus \{v_q\}_{q\in Q})$.
\end{lemma}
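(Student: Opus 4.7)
The plan is to prove \eqref{eqn:indtil1} first by verifying the three defining properties of a fine zonotopal tiling of $Z(V\setminus\{v_i\})$: every listed tile is a parallelepiped contained in $Z(V\setminus\{v_i\})$; the tiles have pairwise disjoint interiors meeting in common faces; and their union covers $Z(V\setminus\{v_i\})$. Formula \eqref{eqn:indtil2} then follows by induction on $\norm{Q}$. Containment is immediate: for $\Pi_{I,B}\in\mathcal{P}$ with $i\notin I\sqcup B$ the tile involves only vectors of $V\setminus\{v_i\}$, while for the translated tile $\Pi_{I\setminus\{i\},B}$ with $i\in I$, the condition $I\cap B=\varnothing$ forces $i\notin B$, so it too uses only vectors of $V\setminus\{v_i\}$. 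For the volume count, Theorem~\ref{thm:zonsub} puts the full-dimensional tiles of $\mathcal{P}$ in bijection with the bases of $V$; discarding those with $i\in B$ leaves exactly one tile per basis of $V\setminus\{v_i\}$, and summing $\det B$ recovers $\vol(Z(V\setminus\{v_i\}))$ by the same theorem applied to $V\setminus\{v_i\}$.

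The crux is non-overlap. Two tiles both of the first type are inherited tiles of $\mathcal{P}$; two tiles both of the second type are translates by $-v_i$ of distinct tiles of $\mathcal{P}$; disjointness of interiors is automatic in both cases. The nontrivial situation pits a type-1 tile $\Pi_{I,B}$ (with $i\notin I\sqcup B$) against a type-2 tile $\Pi_{J\setminus\{i\},C}$ (with $i\in J$). Assuming for contradiction a common interior point $x$, I perturb $x$ within the open intersection so that the segment $\sigma(\lambda)=x+\lambda v_i$, $\lambda\in[0,1]$, meets only facets (and not lower-dimensional faces) of $\mathcal{P}$; the bad set where this fails has measure zero. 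The endpoints satisfy $x\in\Pi_{I,B}^\circ$ and $x+v_i\in\Pi_{J,C}^\circ$, both tiles of $\mathcal{P}$. I then track the canonical coefficient $\alpha_i$ of $\sigma(\lambda)$ read off from the tile of $\mathcal{P}$ currently containing the point. Inside a tile $\Pi_{I',B'}$, the value of $\alpha_i$ is constantly $0$ if $i\notin I'\sqcup B'$, constantly $1$ if $i\in I'$, and linear with slope $1$ if $i\in B'$ (since $v_i$ is then itself a basis vector so moving by $v_i$ increments its coefficient by $1$). A case check on the four shared-facet configurations listed in the paragraph preceding the lemma shows that $\alpha_i$ is moreover continuous across every facet crossing. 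Hence $\alpha_i\colon[0,1]\to[0,1]$ is continuous and piecewise affine with slopes in $\{0,1\}$, so its total increase $\alpha_i(1)-\alpha_i(0)=1$ equals the total $\lambda$-length spent in tiles with $i\in B'$. However, the initial part of $\sigma$ sits inside $\Pi_{I,B}$ where $i\notin B$, contributing a positive length of constant-$\alpha_i$ time, forcing strict inequality---a contradiction.

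Combining disjoint interiors with the matching total volume yields coverage of $Z(V\setminus\{v_i\})$, and the face-matching property follows from the same facet analysis applied to the boundaries of the listed tiles. Finally, \eqref{eqn:indtil2} is obtained by induction on $\norm{Q}$: iterating \eqref{eqn:indtil1} once per element of $Q$ collapses, after $m$ steps with removed set $Q_m=\{i_1,\ldots,i_m\}$, the two described sets to the single set $\{\Pi_{I\setminus Q_m,B}:\Pi_{I,B}\in\mathcal{P},\,B\cap Q_m=\varnothing\}$, yielding the claimed formula when $Q_m=Q$. The main obstacle in this plan is the continuity of $\alpha_i$ across facet crossings, which requires unpacking the four adjacency cases from just before the lemma and carefully tracking the role of index $i$ in each; the remainder of the argument is bookkeeping.
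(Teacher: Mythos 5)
Your argument is necessarily a different route from the paper's, because the paper does not actually prove \eqref{eqn:indtil1}: it delegates it to Proposition~4.3 of the cited work of Galashin et al.\ (where the statement is handled via the deletion operation on the associated oriented-matroid lifting) and only observes that \eqref{eqn:indtil2} follows by iteration. Your self-contained core argument is correct and is the right kind of argument: containment and the volume count (granting the standard reading of Theorem~\ref{thm:zonsub}, that the map $\Pi_{I,B}\mapsto B$ is a bijection onto the bases of $V$) reduce everything to pairwise disjointness of interiors, the only nontrivial case being a retained tile against a translated one, and your tracking of the coefficient $\alpha_i$ along $\sigma(\lambda)=x+\lambda v_i$ does settle that case. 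I checked the continuity of $\alpha_i$ across the four shared-facet configurations and the slope dichotomy ($0$ if $i\notin B'$, $1$ if $i\in B'$); the contradiction $1=\alpha_i(1)-\alpha_i(0)<1$ is sound, and the generic-perturbation step is legitimate since the bad starting points form a null set inside the open intersection.

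There is, however, one genuine gap: the face-matching condition. In this paper a fine zonotopal tiling is by definition a \emph{polyhedral subdivision}, i.e.\ any two tiles must intersect in a common face, and this property is used downstream (Proposition~\ref{prop:faces} needs every internal facet of the induced tiling to be shared by exactly two tiles). Disjoint interiors plus exact coverage by the correct total volume does \emph{not} imply face-to-faceness --- a brick-wall pattern covers a rectangle with pairwise interior-disjoint translates of one tile shape and is not a subdivision --- so this is a separate condition to verify, not a corollary of what you proved. For two retained tiles or two translated tiles the property is inherited from $\mathcal{P}$ (translation by $-v_i$ preserves it), but for a retained tile $\Pi_{I,B}$ against a translated tile $\Pi_{J\setminus\{i\},C}$ the face relation they had in $\mathcal{P}$ is destroyed by the shift, and your one-clause appeal to ``the same facet analysis'' does not produce an argument; note also that since $A$ may contain repeated points, parallel facets with distinct index sets genuinely occur, so offset overlaps cannot be excluded purely on combinatorial grounds. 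You would need either to extend the $\alpha_i$-tracking to show that the intersection of such a pair is a face of each (e.g.\ by analyzing which coordinates $\alpha_j$ are pinned to $0$ or $1$ on the intersection), or to fall back on the oriented-matroid deletion argument that the paper cites.
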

\begin{proof}
For \eqref{eqn:indtil1}, see e.g. Proposition 4.3 of \cite{galashin2019higher}; \eqref{eqn:indtil2} follows from \eqref{eqn:indtil1} by repeated application.
\end{proof}

Since the tiles in $\mathcal{P}$ form a polyhedral subdivision of $Z(V)$, we can form its adjacency graph $\mathcal{G}$ by associating to each tile $\Pi_{I,B}$ a vertex in $\mathcal{G}$ and by connecting two tiles $\Pi_{I,B}$ and $\Pi_{I^\prime,B^\prime}$ with an edge if and only if the tiles share a facet.

\begin{figure}[htbp]
    \centering
    \subfloat{\includegraphics[width=0.42\linewidth]{./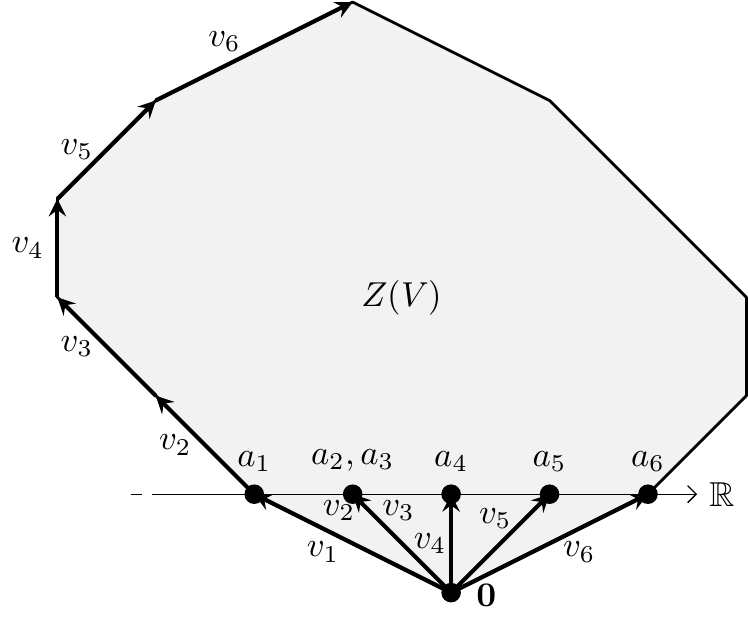}}\hspace{0.05\linewidth}%
    \subfloat{\includegraphics[width=0.50\linewidth]{./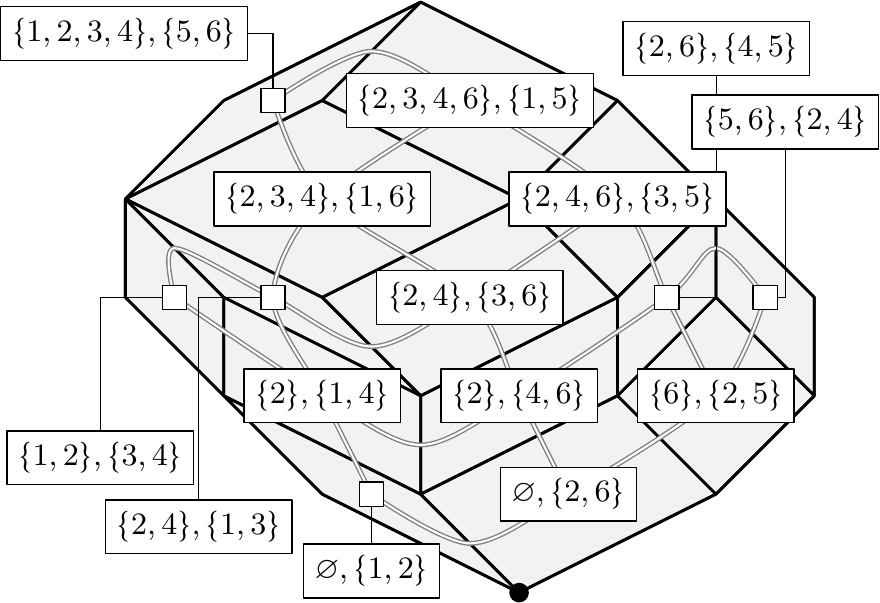}}
    \caption{{\em Left}: a point configuration $a_1,\ldots,a_6$ in $\mathbb{R}$, with $a_2=a_3$, their projective lifts $v_1,\ldots,v_6$ and the zonotope $Z(V)$. {\em Right}: a fine zonotopal tiling of $Z(V)$, the subsets $I,B$ associated to each tile $\Pi_{I,B}$, and the adjacency graph $\mathcal{G}$.}
    \label{fig:zonotope}
\end{figure}

\section{Polynomial-reproducing spline spaces}
\label{sec:sp}
The degree of approximation of a spline space is closely related to the maximal degree of polynomials it contains in its linear span \cite{de1990quasiinterpolants}. Such spaces are called {\em polynomial-reproducing}. Determining which spline spaces are polynomial-reproducing proved more challenging in $d>1$ than in the one-dimensional case. Many interesting spline spaces have been found on suitable triangulations and subdivisions (see e.g. \cite{lyche2017stable,bracco2016generalized}). We focus here on a recent approach by Neamtu \cite{neamtu2007delaunay} that is not based on a pre-existing subdivision. In his work, Neamtu showed that spline functions associated to Delaunay configurations of order $k$ form indeed a polynomial-reproducing spline space up to degree $k$. We introduce here briefly his results, before proposing a generalization.

First, let us recall the definition of the {\em polar form} of a polynomial (see e.g. \cite{ramshaw1989blossoms}):
\begin{definition}
\label{def:polform}
Let $k\geq 0$ and let $q(x)$, $x\in\mathbb{R}^d$, be a $d$-variate polynomial of degree at most $k$. Then there exists a unique function $Q(x_1,\ldots,x_k)$ of the $d$-dimensional variables $(x_1,\ldots,x_k)$ that is symmetric under permutation of its arguments, affine in each of them, and that agrees with $q$ on the diagonal, i.e., $Q(x,\ldots,x)=q(x)$. $Q$ is called the {\em polar form} of $q$.
\end{definition} 

Let $A$ be an infinite set of points in $\mathbb{R}^d$ in {\em general position}, i.e., where no subset of $d+1$ points is affinely dependent and no subset of $d+2$ points is co-spherical, and with no accumulation point. A {\em Delaunay configuration} $X_{I,B}$ of order $k\geq 0$ is any disjoint couple of sets $B$,~$I\subseteq [n]$ with $\norm{B}=d+1$, $\norm{I}=k$ such that the sphere circumscribed to the simplex $\Delta_B\df\ch(\{a_b\}_{b\in B})$ contains in its interior the points $\{a_i\}_{i\in I}$ and no other point of $A$. Notice that this definition depends crucially on the points being in general position. To each such configuration, we can associate through \eqref{eqn:splinerec} the $d$-variate spline function of order $k$
\begin{equation}
    \label{eqn:neamtuspline}
    \M{x}{X_{I,B}}\df\M{x}{\{a_i\}_{i\in I\sqcup B}}.
\end{equation}
Neamtu's result can be stated as follows:
\begin{theorem}[Neamtu \cite{neamtu2007delaunay}]
\label{thm:polyrepn}
Let $q(x)$ be a polynomial of degree at most $k$. Then, for all $x\in\mathbb{R}^d$,
\begin{equation}
    q(x)=\binom{k+d}{d}^{-1}\sum_{X_{I,B}\in D_k}Q((a_i)_{i\in I})\vol^d(\Delta_B)\M{x}{X_{I,B}},
\end{equation}
where $Q$ is the polar form associated to $q$ and the sum is extended to the set $D_k$ of Delaunay configurations of $A$ of order $k$.
\end{theorem}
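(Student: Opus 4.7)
The plan is to proceed by induction on the order $k$, using the multi-affine symmetric structure of the polar form $Q$ together with the recurrence \eqref{eqn:splinerec-2} and the knot insertion identity \eqref{eqn:splinepar}. For the base case $k=0$, a polynomial of degree $0$ is a constant $q\equiv c$ whose polar form (of no arguments) equals $c$, and a Delaunay configuration of order $0$ is a simplex $\Delta_B$ whose circumscribed sphere encloses no point of $A$, i.e., a simplex of the Delaunay triangulation of $\mathbb{R}^d$. From \eqref{eqn:splinerec-1} and $\detp{B}=d!\,\vol^d(\Delta_B)$, we obtain $\vol^d(\Delta_B)\,\M{x}{X_{\varnothing,B}}=\indicf{\Delta_B}(x)$, and the claim reduces to $c\sum_{\Delta_B}\indicf{\Delta_B}(x)=c$, which holds almost everywhere (and hence everywhere by continuity) since under the general-position hypothesis and the absence of accumulation points, the Delaunay simplices tile $\mathbb{R}^d$.

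For the inductive step, the key observation is that for any fixed $y\in\mathbb{R}^d$, the map $q_y(x):=Q(y,x,\ldots,x)$ is a polynomial in $x$ of degree at most $k-1$, whose polar form is $Q(y,\blank,\ldots,\blank)$. Applying the induction hypothesis to $q_y$ and then setting $y=x$ yields
\[
q(x)=\binom{k-1+d}{d}^{-1}\sum_{X_{I',B'}\in D_{k-1}} Q(x,(a_i)_{i\in I'})\,\vol^d(\Delta_{B'})\,\M{x}{X_{I',B'}}.
\]
I would then expand $Q(x,(a_i)_{i\in I'})$ using affinity of $Q$ in its first slot together with the barycentric identity $x=\sum_{b\in B'}(\dets{B'}{b}{x}/\detp{B'})\,a_b$, producing
\[
Q(x,(a_i)_{i\in I'})=\sum_{b\in B'}\frac{\dets{B'}{b}{x}}{\detp{B'}}\,Q((a_j)_{j\in I'\sqcup\{b\}}).
\]
After substitution, each summand carries a factor of the form $(\dets{B'}{b}{x}/\detp{B'})\,\M{x}{X_{I',B'}}$, which by \eqref{eqn:splinerec-2} is precisely the shape of a term contributing to an order-$k$ spline $\M{x}{X_{I,B}}$ with $I\sqcup B\supseteq I'\sqcup B'\sqcup\{b\}$. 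The knot insertion formula \eqref{eqn:splinepar} would then be used to reorganize these partial contributions into a clean sum indexed by Delaunay configurations of order $k$.

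The crux of the argument is the combinatorial bookkeeping in this last step. A single order-$k$ Delaunay configuration $X_{I,B}$ is reached from several order-$(k-1)$ sub-configurations obtained by promoting an element of $I$ into $B$ (or removing an element from $I\sqcup B$), and the coefficients produced by induction must reassemble correctly across this many-to-one correspondence. I expect this to hinge on the defining empty-sphere property of Delaunay configurations: when two order-$(k-1)$ configurations $X_{I',B'}$ and $X_{I'',B''}$ share a facet (i.e., $\norm{B'\cap B''}=d$), the common circumsphere forces matching determinantal ratios that enforce the required cancellation of boundary contributions, analogously to the classical flip argument for Delaunay triangulations. The normalization finally emerges from the identity $\binom{k-1+d}{d}^{-1}\cdot k/(k+d)=\binom{k+d}{d}^{-1}$, where the factor $k/(k+d)$ is the reciprocal of the prefactor appearing in \eqref{eqn:splinerec-2} when passing from order $k-1$ back up to order $k$.
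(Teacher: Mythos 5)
Your overall strategy---induction on $k$, applying the inductive hypothesis to $q_y(x)\df Q(y,x,\ldots,x)$ and then diagonalizing, followed by the barycentric expansion of $Q(x,(a_i)_{i\in I'})$ over $B'$---is exactly the skeleton of Neamtu's argument, which this paper reproduces in generalized form in the proof of Theorem \ref{thm:polyrep}: the identity you must close the induction with is precisely the equality of \eqref{eqn:polyrepbase} and \eqref{eqn:polyrep2}, and your normalization check is correct. The problem is that the step you explicitly defer (``I expect this to hinge on the defining empty-sphere property\ldots'') is not a detail to be filled in later; it is the entire content of the proof. What is needed is the edge-matching property (Proposition 2.1 of Neamtu, generalized here as Proposition \ref{prop:faces}): each pair $(I'\sqcup\{b'\},\,C)$ with $C=B'\setminus\{b'\}$ produced by your expansion is a ``facet'' shared with exactly one other configuration, which is either (a) another order-$(k-1)$ configuration $X_{I'',B''}$ with $I''\sqcup\{b''\}=I'\sqcup\{b'\}$, in which case $a_{b'}$ and $a_{b''}$ lie on \emph{opposite} sides of $\aff(\{a_c\}_{c\in C})$ and the two contributions cancel, or (b) an order-$k$ configuration $X_{I,B}$ with $I=I'\sqcup\{b'\}$ and $B\setminus\{b\}=C$, in which case $a_b$ and $a_{b'}$ lie on the \emph{same} side of $\aff(\{a_c\}_{c\in C})$---which is exactly what forces $\dets{B}{b}{x}=\dets{B'}{b'}{x}$ and lets the surviving term be identified with the term of the recurrence \eqref{eqn:splinerec-2} for $\M{x}{X_{I,B}}$ in which $b$ is dropped. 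Without this dichotomy and its sign statement, the ``reassembly'' is unjustified: the lower-order splines $\M{x}{(a_i)_{i\in I\sqcup B\setminus\{b\}}}$ appearing when one expands an order-$k$ configuration via \eqref{eqn:splinerec-2} are in general \emph{not} order-$(k-1)$ Delaunay configurations, and conversely not every term of your expansion feeds an order-$k$ configuration; the many-to-one correspondence you mention is precisely what the lemma controls.

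Two smaller points. First, the knot-insertion formula \eqref{eqn:splinepar} plays no role in this argument---the matching is done facet by facet using the recurrence \eqref{eqn:splinerec-2} alone---so invoking it suggests a reorganization that does not actually occur. Second, since $A$ is infinite, $D_k$ is infinite, and you should note that the absence of accumulation points makes the sum locally finite; relatedly, your base case identity $\sum_{\Delta_B}\indicf{\Delta_B}(x)=1$ holds only off the facets of the Delaunay triangulation, a convention issue that must be handled (as it must be at every order) rather than dismissed ``by continuity'', since indicator functions are not continuous.
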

Neamtu's result is based upon some strong assumptions on $A$, notably the infiniteness of the general position of points in $A$, which we are able to relax by using the combinatorial nature of zonotopal tilings to our advantage.

Let now $A=(a_1,\ldots,a_n)$ be any finite point configuration in $\mathbb{R}^d$. Assume that the affine span of the points in $A$ is the whole $\mathbb{R}^d$. Let $V$ be the associated vector configuration and $Z(V)$ its associated zonotope, as in Section \ref{sec:back:om}. Then the following, more general statement holds:

\begin{theorem}
\label{thm:polyrep}
Let $\mathcal{P}$ be a fine zonotopal tiling of $Z(V)$, let $0\leq k\leq n-d-1$ and let $\mathcal{P}^{(k)}\df \{\Pi_{I,B}\in\mathcal{P}:\,\norm{I}=k\}$. Each tile $\Pi_{I,B}\in\mathcal{P}^{(k)}$ can be associated via \eqref{eqn:splinerec} to the $d$-variate spline of degree $k=\norm{I}$
\begin{equation}
\label{eqn:splinedef}
   \M{\blank}{\Pi_{I,B}}\df \M{\blank}{(a_i)_{i\in I\sqcup B}}.
\end{equation}
Then, for any polynomial $q(x)$ of degree at most $k$,
\begin{equation}
\label{eqn:polyrep}
	q(x)=\frac{k!}{(k+d)!}\sum_{\Pi_{I,B}\in\mathcal{P}^{(k)}}\!\!\!Q((a_i)_{i\in I})\vol^{d+1}(\Pi_{I,B})\M{x}{\Pi_{I,B}}\mbox{ for } x\in\ch_k(A),
\end{equation}
where $Q$ is the polar form of $q(x)$ and
\begin{equation}
\label{eqn:chpa}
    \ch_k(A)=\bigcap_{\substack{S\subseteq [n]\\ \norm{S}=n-k}}\ch(\{a_i\}_{i\in S})
\end{equation}
is the intersection of the convex hulls of all subconfigurations of $A$ of size $n-k$.
\end{theorem}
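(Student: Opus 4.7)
I would proceed by induction on the order $k$, combining the Micchelli recursion \eqref{eqn:splinerec-2}, the multi-affinity of the polar form $Q$, and the combinatorial structure of $\mathcal{P}$ described in Section \ref{sec:back:om} together with Lemma \ref{lem:inducedtil}. For the base case $k = 0$, the polynomial $q$ is a constant, $Q = q$, and $\ch_0(A) = \ch(A)$. By \eqref{eqn:splinerec-1} one has $\M{x}{\Pi_{\varnothing,B}} = (d!/\detp{B})\,\indicf{B}(x)$ and $\vol^{d+1}(\Pi_{\varnothing,B}) = \detp{B}$, so the right-hand side of \eqref{eqn:polyrep} collapses to $q \sum_{\Pi_{\varnothing,B} \in \mathcal{P}^{(0)}} \indicf{B}(x)$. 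The identity then reduces to the classical fact that the simplices $\{\ch(\{a_b\}_{b \in B}) : \Pi_{\varnothing,B} \in \mathcal{P}^{(0)}\}$ form a triangulation of $\ch(A)$, a well-known consequence of the Bohne--Dress correspondence for fine zonotopal tilings.

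For the inductive step, assume \eqref{eqn:polyrep} has been established for all degrees strictly less than $k$. The plan is to rewrite both sides of the order-$k$ identity as matching double sums indexed by pairs of adjacent tiles. For any fixed $y \in \mathbb{R}^d$, the function $p_y(x) \df Q(x,\ldots,x,y)$ is a polynomial of degree at most $k-1$ in $x$ with polar form $(x_1,\ldots,x_{k-1}) \mapsto Q(x_1,\ldots,x_{k-1},y)$, so the inductive hypothesis, valid on $\ch_{k-1}(A) \supseteq \ch_k(A)$, expands $p_y(x)$ over $\mathcal{P}^{(k-1)}$. Specialising to $y=x$ and using the affinity of $Q$ in its last slot to expand $Q((a_i)_{i\in I'},x) = \sum_{b'\in B'}(\dets{B'}{b'}{x}/\detp{B'})\,Q((a_i)_{i\in I'\sqcup\{b'\}})$ produces a representation of $q(x)$ as a double sum indexed by pairs $(\Pi_{I',B'},b')\in\mathcal{P}^{(k-1)}\times B'$ involving the order-$(k-1)$ splines $\M{x}{(a_i)_{i\in I'\sqcup B'}}$. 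In parallel, applying \eqref{eqn:splinerec-2} to every $\M{x}{\Pi_{I,B}}$ on the right-hand side of \eqref{eqn:polyrep} rewrites it as a double sum over pairs $(\Pi_{I,B},b)\in\mathcal{P}^{(k)}\times B$ with the same prefactor $(k-1)!/(k+d-1)!$ and summand involving $\M{x}{(a_i)_{i\in I\sqcup(B\setminus\{b\})}}$.

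The central step is the identification of these two double sums through the facet adjacency of $\mathcal{P}$. Each summand of the $\mathcal{P}^{(k)}$ sum corresponds to a ``bottom'' facet $\Pi_{I,B\setminus\{b\}}$ of $\Pi_{I,B}$ (the face $\alpha_b=0$); when this facet is interior to $Z(V)$ and shared with an order-$(k-1)$ tile $\Pi_{I',B'}$, the relations $I = I'\sqcup\{b'\}$ and $B' = (B\setminus\{b\})\sqcup\{b'\}$ taken from the facet-sharing dichotomy of Section \ref{sec:back:om} pair it uniquely with the summand attached to the ``top'' facet of $\Pi_{I',B'}$ (the face $\alpha_{b'}=1$) in the other sum. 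The $Q$-factors and $M$-knot sets then agree, while a short computation shows $\dets{B}{b}{x} = \dets{B'}{b'}{x}$: indeed, the two tiles sit on opposite sides of their common facet while attached via opposite face types ($\alpha_b=0$ vs.\ $\alpha_{b'}=1$), which forces $a_b$ and $a_{b'}$ to lie on the \emph{same} side of the affine hull of $\{a_c : c \in B\setminus\{b\}\}$ and hence the two determinants to share their sign under the positivity convention on $\detp{B}$ and $\detp{B'}$.

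The principal technical obstacle is the treatment of the unpaired contributions, of three types: bottom facets of $\mathcal{P}^{(k)}$ tiles shared with another $\mathcal{P}^{(k)}$ tile, top facets of $\mathcal{P}^{(k-1)}$ tiles shared with another $\mathcal{P}^{(k-1)}$ tile, and facets lying on $\partial Z(V)$. In both same-order cases the two tiles share a facet of the \emph{same} type, so the opposite-side geometry now places $a_b$ and $a_{b'}$ on \emph{opposite} sides of the common hyperplane, the determinants $\dets{B}{b}{x}$ and $\dets{B'}{b'}{x}$ have opposite signs, and the paired summands cancel. For a boundary facet, say a bottom facet $\Pi_{I,B\setminus\{b\}}$ on $\partial Z(V)$, there is a supporting affine hyperplane $\{\psi=c\}$ with $\psi(a_i)>c$ for $i\in I$, $\psi(a_i)=c$ for $i\in B\setminus\{b\}$, and $\psi(a_i)<c$ for the remaining indices; the choice $S=[n]\setminus I$ (of size $n-k$) in \eqref{eqn:chpa} gives $\ch(\{a_i\}_{i\in S})\subseteq\{\psi\leq c\}$ and thus $\psi(x)\leq c$ for all $x\in\ch_k(A)$, placing $x$ outside the spline support $\ch(\{a_i\}_{i\in I\sqcup(B\setminus\{b\})})\subseteq\{\psi\geq c\}$ except on the zero-measure hyperplane itself; the attached spline therefore vanishes at every interior point of $\ch_k(A)$, and the identity extends to the closure by continuity, closing the induction.
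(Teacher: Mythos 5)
Your proof is correct and takes essentially the same approach as the paper's. The paper presents the argument as an iterative reduction --- it shows directly that the $\mathcal{P}^{(k)}$ sum can be rewritten as the $\mathcal{P}^{(k-1)}$ sum with one argument of $Q$ frozen to $x$, and then ``iterates until $k=0$'' --- whereas you formalize this as an induction by applying the inductive hypothesis to the auxiliary degree-$(k-1)$ polynomial $p_y(x) = Q(x,\ldots,x,y)$ and specializing $y=x$; these produce exactly the same intermediate double sum, and the facet-matching/cancellation and boundary-facet arguments you give mirror the paper's use of Proposition \ref{prop:faces} (items (i) and (ii)) verbatim, including the observation that for same-order cancellations the knot multisets $I\sqcup B$ and $Q$-arguments coincide so only the opposite-sign determinants differ, and that boundary facets contribute a spline that vanishes on the interior of $\ch_k(A)$ via the $S=[n]\setminus I$ choice in \eqref{eqn:chpa}.
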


The generalization with respect to Neamtu's result is twofold. First, for a given point configuration $A$, many different fine zonotopal tilings of $Z(V)$ can be constructed. Each tiling then yields a family of polynomial-reproducing spline spaces for all degrees up to $n-d-1$. In fact, Delaunay configurations can be seen as a special case of this construction, as discussed in the next section.

A second generalization is that the point configuration $A$ is allowed to contain affinely dependent subsets and repeated points. In this case, some of the spline functions have reduced regularity \cite{micchelli1980constructive}, and thus the spline spaces that can be constructed in this way are more generic. Observe that, if all the vertices of $\ch(A)$ are repeated at least $k+1$ times in $A$, then $\ch_k(A)=\ch(A)$. We obtain therefore a multivariate generalization of the behavior of {\em clamped} (also called {\em open}) knot vectors in one dimension:
\begin{corollary}
\label{cor:polyrepb}
Assume that each vertex of $\ch(A)$ is repeated at least $k+1$ times in $A$. Then, in the same conditions as Theorem \ref{thm:polyrep}, the splines $\M{\blank}{\Pi_{I,B}}$ for $\Pi_{I,B}\in\mathcal{P}^{(k)}$ reproduce polynomials up to order $k$ on the whole $\ch(A)$.
\end{corollary}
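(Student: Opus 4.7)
The plan is to deduce the corollary directly from Theorem \ref{thm:polyrep} by reducing it to the claim that under the stated hypothesis one has $\ch_k(A)=\ch(A)$. Once this equality is established, the corollary is an immediate specialization of \eqref{eqn:polyrep}, since the spline identity \eqref{eqn:polyrep} in Theorem \ref{thm:polyrep} already holds on all of $\ch_k(A)$.

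The inclusion $\ch_k(A)\subseteq\ch(A)$ is immediate from \eqref{eqn:chpa}, as each convex hull appearing in the intersection is a subset of $\ch(A)$. For the reverse inclusion, the key observation is the standard fact that $\ch(\{a_i\}_{i\in S})=\ch(A)$ whenever $\{a_i\}_{i\in S}$ contains every vertex of the polytope $\ch(A)$: a polytope is the convex hull of its vertices, and the remaining points of $A$ already lie inside $\ch(A)$ and therefore cannot enlarge the hull. Thus it suffices to prove that for every index set $S\subseteq[n]$ with $\norm{S}=n-k$, each vertex of $\ch(A)$ is represented in $(a_i)_{i\in S}$.

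This final step is a one-line pigeonhole argument. Fix a vertex $v$ of $\ch(A)$ and let $J_v\df\{i\in [n]:a_i=v\}$. The hypothesis gives $\norm{J_v}\geq k+1$, whereas $\norm{[n]\setminus S}=k$, so $J_v\not\subseteq[n]\setminus S$ and hence $J_v\cap S\neq\varnothing$: at least one copy of $v$ survives in $(a_i)_{i\in S}$. Running this over all vertices of $\ch(A)$ yields $\ch(\{a_i\}_{i\in S})=\ch(A)$ for every admissible $S$, so the intersection \eqref{eqn:chpa} collapses to $\ch(A)$, and Theorem \ref{thm:polyrep} transfers polynomial reproduction to the whole $\ch(A)$.

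There is no serious obstacle: the only substantive content beyond invoking Theorem \ref{thm:polyrep} is the counting inequality $\norm{J_v}>\norm{[n]\setminus S}$, which is exactly what the assumption ``each vertex is repeated at least $k+1$ times'' is tailored to ensure. The same pattern, incidentally, explains why in dimension $d=1$ it is enough to repeat the two endpoints of $\ch(A)$ to recover the familiar clamped/open-knot behaviour mentioned just before the corollary.
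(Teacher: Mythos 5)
Your proof is correct and follows exactly the route the paper intends: the paper observes immediately before the corollary that repeating each vertex at least $k+1$ times forces $\ch_k(A)=\ch(A)$, and the corollary then specializes Theorem~\ref{thm:polyrep}. You have simply supplied the (correct) pigeonhole justification for that observation, which the paper leaves implicit.
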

\noindent This is a highly desirable property in view of practical applications, as it allows the definition of boundary conditions.

\subsection{Proof of Theorem \ref{thm:polyrep}}
Neamtu's original proof of the fact that splines associated to Delaunay configurations are polynomial-reproducing (Theorem 4.1 of \cite{neamtu2007delaunay}) rests on a crucial structural property regarding neighbouring pairs of configurations, namely the {\em edge matching} property proved in Proposition 2.1 of \cite{neamtu2007delaunay}. This property underpins also other formulations such as the algorithmic generalization proposed by Liu and and Snoeyink \cite{liu2008computations} and the geometric description of Schmitt in terms of families of convex Jordan curves \cite{schmitt2019bivariate}. We prove hereafter that a similar property also holds for zonotopal tilings.

\begin{proposition}
\label{prop:faces}
Let $\Pi_{J,C}$ be a facet of a tile $\Pi_{I,B}\in\mathcal{P}$, with $\norm{J}=k$. Then $\norm{I}=k$ or $\norm{I}=k-1$, and exactly one of the following is true:
\begin{enumerate}[label=(\roman*)]
\item $\Pi_{J,C}$ is shared between $\Pi_{I,B}$ and exactly another tile $\Pi_{I^\prime,B^\prime}\in\mathcal{P}$, with either $\norm{I^\prime}=k$ or $\norm{I^\prime}=k-1$. Moreover, if $\{b\}=B\setminus B^\prime$ and $\{b^\prime\}=B^\prime\setminus B$, the two points $a_b$ and $a_{b^\prime}$ are separated by the hyperplane $H\df\aff(\{a_c\}_{c\in C})$ if and only if $\norm{I}=\norm{I^\prime}$;\label{item:faces1}
\item there exists an index $b\in B$ such that, for a suitable orientation of the hyperplane $H\df\aff(\{a_c\}_{c\in C})$, the points $\{a_i\}_{i\in I}$ are in the positive closed halfspace of $H$, the points $\{a_i\}_{i\in\overline{I\sqcup B}}$ are in the negative closed halfspace of $H$, and $a_b$ is in the positive open halfspace of $H$ if $b\in J$ and in the negative open halfspace of $H$ if $b\not\in J$.\label{item:faces2}
\end{enumerate}
\end{proposition}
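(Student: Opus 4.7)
The plan is to read off the structure of the facet $\Pi_{J,C}$ directly from the parallelepiped representation \eqref{eqn:zonotile}: it must be obtained from $\Pi_{I,B}$ by freezing $\alpha_{b_0}\in\{0,1\}$ for the unique index $b_0\in B\setminus C$, so either $J=I$ when $\alpha_{b_0}=0$ or $J=I\sqcup\{b_0\}$ when $\alpha_{b_0}=1$. This immediately yields $\norm{I}\in\{k,k-1\}$. Since $\mathcal{P}$ is a polyhedral subdivision of $Z(V)$, the facet is either shared with exactly one other tile (case~\ref{item:faces1}) or a boundary facet of $Z(V)$ (case~\ref{item:faces2}).

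For case~\ref{item:faces1}, I would apply the four sub-case characterization of facet-sharing pairs already stated in the excerpt. Setting $\{b\}=B\setminus B^\prime$ and $\{b^\prime\}=B^\prime\setminus B$, and letting $\sigma,\sigma^\prime\in\{\pm 1\}$ equal $+1$ when $b$ (resp.\ $b^\prime$) lies outside $J$ and $-1$ otherwise, the sub-cases $I=I^\prime$ and $I\sqcup\{b\}=I^\prime\sqcup\{b^\prime\}$ give $\sigma=\sigma^\prime$, while the two mixed sub-cases give $\sigma\sigma^\prime=-1$; hence $\norm{I}=\norm{I^\prime}$ if and only if $\sigma\sigma^\prime=+1$. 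The geometric bridge is the affine hyperplane $\tilde H\df\aff(\Pi_{J,C})\subset\mathbb{R}^{d+1}$ with some normal $\tilde n$: the tile $\Pi_{I,B}$ extends from $\Pi_{J,C}$ in direction $\sigma v_b$ and $\Pi_{I^\prime,B^\prime}$ in direction $\sigma^\prime v_{b^\prime}$, and since the two tiles lie on opposite sides of $\tilde H$, the quantities $\sigma\scal{\tilde n}{v_b}$ and $\sigma^\prime\scal{\tilde n}{v_{b^\prime}}$ must have opposite signs. Writing $\tilde n=(n,\tau)$ and using $v_i=(a_i,1)$ gives $\scal{\tilde n}{v_i}=\scal{n}{a_i}+\tau$, which translates sign conditions in $\mathbb{R}^{d+1}$ into sides of $H\subset\mathbb{R}^d$: one finds that $a_b$ and $a_{b^\prime}$ are separated by $H$ exactly when $\sigma\sigma^\prime=+1$, i.e.\ when $\norm{I}=\norm{I^\prime}$.

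For case~\ref{item:faces2}, $\tilde H$ is now a supporting hyperplane of $Z(V)$ with outward normal $\tilde n$, and the standard Minkowski-sum description of the face of a zonotope in direction $\tilde n$ forces $\alpha_i=1$ when $\scal{\tilde n}{v_i}>0$, $\alpha_i=0$ when $\scal{\tilde n}{v_i}<0$, and $\alpha_i$ free when $\scal{\tilde n}{v_i}=0$. Matching against the form $\Pi_{J,C}$ places $J$ in the closed halfspace $\scal{\tilde n}{v_i}\geq 0$, places $\overline{I\sqcup B}$ in the opposite closed halfspace, and puts $C$ on $\tilde H$. Under the affine identification $\tilde n=(n,\tau)$, $v_i=(a_i,1)$, this yields the required closed containments for $I\subseteq J$ and for $\overline{I\sqcup B}$. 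For the distinguished index $b\in B\setminus C$, the sign of $\scal{\tilde n}{v_b}$ is strict (as $a_b\notin H$ by affine independence of $(a_\beta)_{\beta\in B}$) and is determined by the direction $\sigma v_b$ in which $\Pi_{I,B}$ extends from the facet into $Z(V)$, yielding precisely the open-halfspace dichotomy based on whether $b\in J$ or $b\notin J$.

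The main obstacle is the sign bookkeeping in case~\ref{item:faces1}, where all four combinatorial sub-cases must be reconciled with the single geometric fact that the two sharing tiles lie on opposite sides of $\tilde H$; once this is in place, the rest of the argument is a direct translation of the standard zonotope-face description through the projective lift $v_i=(a_i,1)$.
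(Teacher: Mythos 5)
Your proposal is correct and follows essentially the same route as the paper's proof: read off $J\in\{I,I\sqcup\{b_0\}\}$ from the parallelepiped description, split into the shared-facet and boundary-facet cases via the subdivision property, and then in each case pick a normal to the common hyperplane and translate sign conditions on $\scal{\tilde n}{v_i}$ into sides of $H$ via the projective lift $v_i=(a_i,1)$. The only cosmetic differences are that the paper derives the sign conditions by plugging specific representative vertices into the separation/supporting-hyperplane inequalities, whereas you phrase the same facts as ``direction of extension'' and the Minkowski face-in-direction description; the bookkeeping with $\sigma,\sigma^\prime$ reproduces exactly the paper's four-subcase analysis.
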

\begin{proof}
A facet $\Pi_{J,C}$ of a tile $\Pi_{I,B}$ is obtained by choosing an index $b\in B$ and setting the corresponding coefficient $\alpha_b$ of segment $[0,v_b]$ in \eqref{eqn:zonotile} to either $0$, in which case $J=I$, or $1$, in which case $J=I\sqcup\{b\}$. Thus, $k\df\norm{J}=\norm{I}$ or $k\df\norm{J}=\norm{I}+1$. Since the tiles in $\mathcal{P}$ form a subdivision of $Z(V)$, $\Pi_{J,C}$ is either a shared facet between $\Pi_{I,B}$ and exactly one other tile $\Pi_{I^\prime,B^\prime}$, or is a boundary facet of $Z(V)$. 

In the first case, $C=B\cap B^\prime$, and the previous argument also implies that either $J=I^\prime$ or $J=I^\prime\sqcup\{b^\prime\}$, with $\{b^\prime\}=B^\prime\setminus B$ and $\{b\}=B\setminus B^\prime$, and thus $\norm{I^\prime}=k$ or $\norm{I^\prime}=k-1$. Since both parallelepipeds are convex polytopes, their interiors are separated by the hyperplane spanned by their common facet, and we can choose a nonzero vector $N\in\mathbb{R}^{d+1}$, normal to the facet, satisfying $\scal{v_c}{N}=0$ for all $c\in C= B\cap B^\prime$, and
\begin{equation}
    \label{eqn:sharedface}
    \scal{z-z^\prime}{N}\geq 0
\end{equation}
for all $z\in\Pi_{I,B}$ and $z^\prime\in\Pi_{I^\prime, B^\prime}$. Notice that necessarily $\scal{v_b}{N}\neq 0$ and $\scal{v_{b^\prime}}{N}\neq 0$, since the vectors in $B$ and $B^\prime$ must be linearly independent. The case $\norm{I}=\norm{I^\prime}$ corresponds to either $I=I^\prime$ or $I\sqcup\{b\}=I^\prime\sqcup\{b^\prime\}$. If $I=I^\prime$, then setting $(z,z^\prime)=(v_b+\sum_{i\in I}v_i,\sum_{i\in I^\prime} v_i)$ in \eqref{eqn:sharedface} yields $\scal{v_b}{N}>0$, while choosing $(z,z^\prime)=(\sum_{i\in I}v_i,v_{b^\prime}+\sum_{i\in I^\prime} v_i)$ yields $\scal{v_{b^\prime}}{N}<0$. Thus,
\begin{equation}
    \sign(\scal{v_b}{N})=-\sign(\scal{v_{b^\prime}}{N}).
\end{equation}
If $I\sqcup\{b\}=I^\prime\sqcup\{b^\prime\}$, the same choices of $(z,z^\prime)$ lead to the same conclusion. The case $\norm{I}\neq\norm{I^\prime}$ is very similar, since it implies either $I=I^\prime\sqcup\{b^\prime\}$ or $I\sqcup\{b\}=I^\prime$. In both cases, plugging the couples $(z,z^\prime)=(\sum_{i\in I} v_i,\sum_{i\in I^\prime} v_i)$ and $(z,z^\prime)=(v_b+\sum_{i\in I} v_i,v_{b^\prime}+\sum_{i\in I^\prime} v_i)$ 
in \eqref{eqn:sharedface} leads to 
\begin{equation}
    \sign(\scal{v_b}{N})=\sign(\scal{v_{b^\prime}}{N}).
\end{equation}
Thus, the hyperplane $H=\{x\in\mathbb{R}^d:\scal{N}{(x,1)}=0\}$ satisfies the first part of the proposition.
 
Suppose now that $\Pi_{J,C}$ is a boundary facet of $Z(V)$. Since $Z(V)$ is a convex polytope, all points $z\in Z(V)$ lie in the same closed halfspace of $\Pi_{J,C}$, and we can choose a nonzero vector $N\in\mathbb{R}^{d+1}$, normal to $\Pi_{J,C}$, so that $\scal{v_c}{N}=0$ for all $c\in C$ and
\begin{equation}
    \label{eqn:extface}
    \langle z-\sum_{j\in J} v_j,N\rangle\leq 0
\end{equation}
for all $z\in Z(V)$. Plugging into \eqref{eqn:extface}, respectively, $z=v_e+\sum_{j\in J}v_j$ with $e\not\in J$ and $z=\sum_{j\in J,j\neq f}v_j$ with $f\in J$ shows that
\begin{equation}
    \scal{v_c}{N}=0,\,\scal{v_e}{N}\leq 0,\scal{v_f}{N}\geq 0
\end{equation}
for all $c\in C$, $e\not\in J$ and $f\in J$. Moreover, as before, $\scal{v_b}{N}\neq 0$, otherwise the vectors in $B$ would be linearly dependent. Therefore, $\scal{v_b}{N}>0$ if $b\in J$, and $\scal{v_b}{N}<0$ if $b\not\in J$. Since $I\subseteq J\subseteq I\sqcup B$, the hyperplane $H=\{x\in\mathbb{R}^d:\scal{N}{(x,1)}=0\}$ satisfies the second part of the proposition. 
\end{proof}

Alternative \ref{item:faces1} of Proposition \ref{prop:faces} corresponds exactly to (a generalization of) essential and non-essential faces between Delaunay configurations that are described in Proposition 2.1 of \cite{neamtu2007delaunay}. However, in Proposition \ref{prop:faces} above, the underlying point set $A$ is finite, leading to the additional case \ref{item:faces2}. Notice that the points are not required to be in general position, and can even be repeated multiple times in $A$.

Armed with this result, we are ready to establish the polynomial reproduction property for spline functions associated to $\mathcal{P}$. The proof is similar to that of Theorem 4.1 of \cite{neamtu2007delaunay}; nonetheless, we give here the full derivation in order to point out the contribution of boundary facets. We start by proving the case $k=0$. 
\begin{proposition}
\label{prop:triangulation}
Let $\mathcal{P}^{(0)}\df\{\Pi_{\varnothing,B}\in\mathcal{P}\}$. Then the set of simplices $\mathcal{T}^{(0)}=\{\Delta_B\df\ch(\{a_b\}_{b\in B}):\,\Pi_{\varnothing,B}\in\mathcal{P}^{(0)}\}$ triangulates $\ch(A)$.
\end{proposition}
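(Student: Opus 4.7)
The natural approach is to slice the zonotope $Z(V)\subset\mathbb{R}^{d+1}$ by the hyperplane $H_1\df\{z\in\mathbb{R}^{d+1}:z_{d+1}=1\}$, exploiting the fact that every lifted vector $v_i=(a_i,1)$ has last coordinate equal to $1$. A point $\sum_i\alpha_i v_i\in Z(V)$ with $\alpha_i\in[0,1]$ lies in $H_1$ if and only if $\sum_i\alpha_i=1$, in which case its first $d$ coordinates form a convex combination of the $a_i$'s; hence $Z(V)\cap H_1=\ch(A)\times\{1\}$, which I identify with $\ch(A)$.

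The key computation I would then carry out is the intersection of each tile with $H_1$. A point of $\Pi_{I,B}$ has last coordinate $\norm{I}+\sum_{b\in B}\beta_b$ with $\beta_b\in[0,1]$, so membership in $H_1$ forces $\norm{I}\leq 1$. For $\norm{I}=0$ one obtains $\Pi_{\varnothing,B}\cap H_1=\Delta_B\times\{1\}$, a $d$-simplex since $\detp{B}>0$ guarantees affine independence of $(a_b)_{b\in B}$; for $\norm{I}=1$ with $I=\{j\}$ the slice degenerates to the single point $v_j=(a_j,1)$; and for $\norm{I}\geq 2$ the slice is empty. Since $\mathcal{P}$ covers $Z(V)$, any $x$ in the relative interior of $\ch(A)$ with $x\neq a_j$ for all $j$ lifts to a point $(x,1)$ that must lie in an order-zero tile, and therefore in some $\Delta_B$. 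Such $x$ form a dense subset of $\ch(A)$, so the closed set $\bigcup_{\Pi_{\varnothing,B}\in\mathcal{P}^{(0)}}\Delta_B$ fills $\ch(A)$ entirely.

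For face-to-face compatibility I would use that two distinct tiles $\Pi_{\varnothing,B},\Pi_{\varnothing,B^\prime}$ of the subdivision $\mathcal{P}$ meet in a common face $\Pi_{J,C}$, with $C\subseteq B\cap B^\prime$ and $J\subseteq(B\setminus C)\cap(B^\prime\setminus C)$. Intersecting with $H_1$ and reusing the slice computation, $\Delta_B\cap\Delta_{B^\prime}=(\Pi_{\varnothing,B}\cap\Pi_{\varnothing,B^\prime})\cap H_1$ equals $\ch(\{a_c\}_{c\in C})$ when $\norm{J}=0$, the single vertex $a_j$ (with $j\in B\cap B^\prime$) when $\norm{J}=1$, or the empty set when $\norm{J}\geq 2$. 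In every case the intersection is a common face of $\Delta_B$ and $\Delta_{B^\prime}$, completing the triangulation property.

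The main obstacle I anticipate is the careful handling of the degenerate behaviour of the slice: the hyperplane $H_1$ passes exactly through the vertices $v_j$ of the tiling, and without a general-position hypothesis the points $a_j$ may be repeated or affinely dependent, so one must verify that these coincidences do not leave uncovered regions in $\ch(A)$ nor produce non-facial overlaps between simplices. The density/closure argument above is the cleanest way around this, reducing the coverage question to the generic-interior case where the slice is non-degenerate and the tile-by-tile correspondence is direct.
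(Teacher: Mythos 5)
Your proof is correct, but it takes a genuinely different route from the paper's. You slice the zonotope $Z(V)$ by the hyperplane $H_1=\{z_{d+1}=1\}$, identify $Z(V)\cap H_1$ with $\ch(A)$, and observe that the slice of a tile $\Pi_{I,B}$ is a $d$-simplex, a point, or empty according as $\norm{I}=0$, $\norm{I}=1$, or $\norm{I}\geq 2$; coverage and face-to-face compatibility then fall out of the fact that $\mathcal{P}$ is already a polyhedral subdivision upstairs, with the density-and-closure step patching up the measure-zero set of slice-degenerate points. This is essentially the horizontal-slice viewpoint that the paper itself invokes later (the slice at height $r$ gives the $r$-set polytope and the centroid subdivisions), and which it alludes to in the proof when citing Santos and Björner et al.~as sources for ``equivalent statements.'' The paper's own direct proof instead works entirely in the base: non-degeneracy is immediate from $\detp{B}>0$; disjoint interiors of $\Delta_B$ and $\Delta_{B'}$ are obtained by ruling out a positive linear dependency between $\{v_b\}_{b\in B}$ and $\{-v_{b'}\}_{b'\in B'}$ (such a dependency would force the corresponding parallelepipeds to have overlapping interiors) and then invoking Stiemke's Lemma to manufacture a separating hyperplane; and coverage plus face-to-face come from Proposition~\ref{prop:faces}, which classifies facets as either internal (shared with a unique neighbour) or on $\partial Z(V)$ (hence on $\partial\ch(A)$). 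Your slicing argument is arguably shorter and more self-contained---it never needs Proposition~\ref{prop:faces} nor Stiemke---but the paper's version is deliberately aligned with the oriented-matroid/LP-duality toolkit (Stiemke, facet classification) that it reuses repeatedly, notably in the proof of Theorem~\ref{thm:constructiongen}. One small point you gloss over: you should note explicitly that the intersection $\Pi_{\varnothing,B}\cap\Pi_{\varnothing,B'}$ being a common face of both tiles (hence of the form $\Pi_{J,C}$ with $C\subseteq B\cap B'$ and $J\subseteq(B\cap B')\setminus C$) is exactly the defining property of a polyhedral subdivision of $Z(V)$; once that is stated the rest of the face-to-face computation is airtight.
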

\begin{proof}
The proof can be derived from equivalent statements in \cite{santos2001realizable,santos2002triangulations} or \cite[Chapter~9]{bjorner1999oriented}. We give here a short direct proof for convenience. First, for any tile $\Pi_{\varnothing,B}\in\mathcal{P}^{(0)}$, the points $\{a_b\}_{b\in B}$ are affinely independent, and thus all the simplices in $\mathcal{T}^{(0)}$ are non-degenerate. Let $\Pi_{\varnothing,B^\prime}$ be a distinct tile in $\mathcal{P}^{(0)}$, and assume that there is a positive linear dependency
\begin{equation}
\label{eqn:posdep}
    \sum_{b\in B} \beta_b v_b + \sum_{b^\prime\in B^\prime}\gamma_{b^\prime} (-v_{b^\prime}) = 0
\end{equation}
with $\beta_b,\gamma_{b^\prime}>0$. If we define $C\df\max\left(\{\beta_b\}_{b\in B}\cup\{\gamma_{b^\prime}\}_{b^\prime\in B^\prime}\right)$, then the point $z\df\sum_{b\in B} \beta_b/C\, v_b= \sum_{b^\prime\in B^\prime}\gamma_{b^\prime}/C\,  v_{b^\prime}$ lies in the interior of both $\Pi_{\varnothing, B}$ and $\Pi_{\varnothing, B^\prime}$, which is impossible since $\mathcal{P}$ is a polyhedral subdivision. Therefore, there cannot exist any positive linear dependency \eqref{eqn:posdep} and, by Stiemke's Lemma \cite{stiemke1915positive}, there must be a vector $N\in\mathbb{R}^{d+1}$ with $\scal{N}{v_b}\geq 0$ for all $b\in B$ and $\scal{N}{v_{b^\prime}}\leq 0$ for all $b^\prime\in B^\prime$. The corresponding hyperplane $\{x\in\mathbb{R}^d:\scal{N}{(x,1)}=0\}$ separates $\Delta_B$ and $\Delta_{B^\prime}$, proving that they have disjoint interiors.

Finally, let $\Pi_{\varnothing,C}$ be the facet of $\Pi_{\varnothing,B}$ obtained by setting, for a single $b\in B$, the coefficient $\alpha_b$ of the segment $[0,v_b]$ in \eqref{eqn:zonotile} equal to zero. Then $F_C\df\ch(\{a_c\}_{c\in C})$ is a $(d-1)$-dimensional face of $\Delta_B$. By Proposition \ref{prop:faces}, either there is a unique tile $\Pi_{\varnothing,B^\prime}$ with $\norm{B\cap B^\prime}=d$, i.e., there is exactly one distinct simplex $\Delta_{B^\prime}$ in $\mathcal{T}^{(0)}$ sharing $F_C$ with $\Delta_B$, or $F_C$ lies on a hyperplane that does not contain any point of $A$ on its positive side, and therefore belongs to the boundary of $\ch(A)$. This completes the proof.
\end{proof}
The indicator functions of simplices in $\mathcal{T}^{(0)}$ correspond exactly to degree-zero splines via \eqref{eqn:splinerec-1}. Proposition \ref{prop:triangulation} then provides the root of the recurrence in the following proof.
\begin{proof}[Proof of Theorem \ref{thm:polyrep}]
Similarly to the proof of Theorem 4.1 in \cite{neamtu2007delaunay}, we simply have to prove that the expression
\begin{equation}
    \label{eqn:polyrepbase}
     \sum_{\Pi_{I,B}\in\mathcal{P}^{(k)}}\!\!Q((a_i)_{i\in I})\vol^{d+1}(\Pi_{I,B})\M{x}{\Pi_{I,B}}
\end{equation}
can be rewritten in terms of the tiles in $\mathcal{P}^{(k-1)}$ as
\begin{equation}
\label{eqn:polyrep2}
   \frac{k+d}{k}\sum_{\Pi_{I^\prime,B^\prime}\in\mathcal{P}^{(k-1)}}\!\!Q((a_i)_{i\in I^\prime},x)\vol^{d+1}(\Pi_{I^\prime,B^\prime})\M{x}{\Pi_{I^\prime,B^\prime}}.
\end{equation}
In fact, iterating until $k=0$ directly leads to the expression
\begin{equation}
\label{eqn:polyreprec2}
    \binom{k+d}{k}\sum_{\Pi_{\varnothing,B}\in\mathcal{P}^{(0)}}\!\!Q(x,\ldots,x)\vol^{d+1}(\Pi_{\varnothing,B^\prime})\M{x}{\Pi_{\varnothing,B^\prime}},
\end{equation}
which is simply equal to $(k+d)!/k!\,q(x)$ thanks to \eqref{eqn:splinerec-1}, the definition of polar form (Definition \ref{def:polform}), and the fact that the simplices defined by splines in $\mathcal{P}^{(0)}$ triangulate $\ch(A)$ (Proposition \ref{prop:triangulation}).

In order to prove that \eqref{eqn:polyrepbase} is equal to \eqref{eqn:polyrep2}, similarly to \cite{neamtu2007delaunay}, we first apply the spline recurrence formula \eqref{eqn:splinerec-2} to \eqref{eqn:polyrepbase}, obtaining
\begin{equation}
\label{eqn:sump}
    \frac{k+d}{k}\!\!\!\sum_{\Pi_{I,B}\in\mathcal{P}^{(k)}}\!\!\!Q((a_i)_{i\in I})\!\!\sum_{b\in B}\!\!\dets{B}{b}{x}\M{x}{\Pi_{I,B\setminus\{b\}}},
\end{equation}
since $\vol^{d+1}(\Pi_{I,B})=\det(B)$. We can associate every term in \eqref{eqn:sump} with a facet $\Pi_{I,B\setminus\{b\}}$ of $\mathcal{P}$. Following Proposition \ref{prop:faces}, there are three possibilities:
\begin{enumerate}[label=(\roman*)]
\item The facet is shared with exactly one other tile $\Pi_{I^\prime,B^\prime}\in\mathcal{P}^{(k)}$, with $I^\prime=I$, $B^\prime\setminus \{b^\prime\}=B\setminus \{b\}=B\cap B^\prime$ for some $b^\prime\in B^\prime$, and with $a_b$ and $a_{b^\prime}$ lying on opposite sides of $H\df\aff(\{v_i\}_{i\in B\cap B^\prime})$. Therefore $\dets{B}{b}{x}=-\dets{B^\prime}{b^\prime}{x}$, and the two corresponding terms in the sum cancel each other;
\item The facet is shared with exactly one other tile $\Pi_{I^\prime,B^\prime}\in\mathcal{P}^{(k-1)}$, with $I^\prime\sqcup\{b^\prime\}=I$, $B^\prime\setminus \{b^\prime\}=B\setminus\{b\}=B\cap B^\prime$ for some $b^\prime\in B^\prime$, and with $a_b$ and $a_{b^\prime}$ lying on the same side of $H\df\aff(\{a_i\}_{i\in B\cap B^\prime})$. After noticing that $I\sqcup B\setminus\{b\}=I^\prime\sqcup B^\prime$, the corresponding term in \eqref{eqn:sump} can be rewritten as
\begin{equation}
\label{eqn:efaces}
    \frac{k+d}{k}Q((a_i)_{i\in I^\prime\sqcup\{b^\prime\}})\dets{B^\prime}{b^\prime}{x}\M{x}{(a_i)_{i\in I^\prime\sqcup B^\prime}}.
\end{equation}
\item The facet lies on the boundary of $Z(V)$. In this case the hyperplane $H\df\aff(\{a_i\}_{i\in B\setminus\{b\}})$ contains all the points $\{a_i\}_{i\in I\sqcup B\setminus\{b\}}$ in its positive closed halfspace, out of which at most $\norm{I}=k$ are in its positive open halfspace. All other points of $A$ lie in its negative closed halfspace. Consequently, if $x$ is in the interior of $\ch_k(A)$, then necessarily $x\not\in\ch(\{a_i\}_{i\in I\sqcup B\setminus\{b\}})$ and therefore
\begin{equation}
    \M{x}{\Pi_{I,B\setminus\{b\}}}=\M{x}{(a_i)_{i\in I\sqcup B\setminus\{b\}}}=0.
\end{equation}
\end{enumerate}

Focusing now on \eqref{eqn:polyrep2}, and again similarly to \cite{neamtu2007delaunay}, we rewrite $x$ in barycentric coordinates with respect to the simplex $\ch(\{a_{b^\prime}\}_{b^\prime\in B^\prime})$ as 
\begin{equation}
\label{eqn:xbary}
    x=\sum_{b^\prime\in B^\prime}\frac{\dets{B^\prime}{b^\prime}{x}}{\detp{B^\prime}}a_{b^\prime},
\end{equation}
and since $Q$ is multiaffine and $\vol^{d+1}(\Pi_{I^\prime,B^\prime})=\detp{B^\prime}$, using \eqref{eqn:xbary}, we can rewrite \eqref{eqn:polyrep2} as
\begin{equation}
\label{eqn:sump2}
    \frac{k+d}{k}\sum_{\Pi_{I^\prime,B^\prime}\in\mathcal{P}^{(k-1)}}\!\!\!\!\!\M{x}{\Pi_{I^\prime,B^\prime}}\sum_{b^\prime\in B^\prime}Q((a_i)_{i\in I^\prime\sqcup\{b^\prime\}})\dets{B^\prime}{b^\prime}{x}.
\end{equation}
Similarly as before, by Proposition \ref{prop:faces}, we can associate each term in \eqref{eqn:sump2} with a facet $\Pi_{I^\prime\sqcup\{b^\prime\},B^\prime\setminus\{b^\prime\}}$ of $\mathcal{P}$. If such a facet is shared with exactly one other tile $\Pi_{I,B}\in\mathcal{P}^{(k-1)}$, then it appears twice in the sum, and the two contributions cancel each other since $I^\prime\sqcup\{b^\prime\}=I\sqcup\{b\}$, $I\sqcup B=I^\prime\sqcup B^\prime$ and $a_b$, $a_{b^\prime}$ are separated by $H\df\aff(\{a_i\}_{i\in B\cap B^\prime})$. Terms corresponding to facets on the boundary of $Z(V)$ again do not contribute to the sum, since the corresponding hyperplane $H\df\aff(\{a_i\}_{i\in B^\prime\setminus\{b^\prime\}})$ separates at most the $k$ points in $I^\prime\sqcup\{b^\prime\}$ from the other $n-k$ points of $A$, and since $b^\prime\not\in I^\prime$, the points $\{a_i\}_{i\in I^\prime\sqcup B^\prime}$ either lie on $H$ or on the positive side of $H$. Thus, if $x\in\ch_k(A)$, we have once more
\begin{equation}
    \M{x}{\Pi_{I^\prime,B^\prime}}=\M{x}{(a_i)_{i\in I^\prime\sqcup B^\prime}}=0.
\end{equation}
The remaining terms correspond to facets shared with exactly one other tile $\Pi_{I,B}\in\mathcal{P}^{(k)}$, and they are equal to the terms \eqref{eqn:efaces}, completing the proof.
\end{proof}

Two examples of families of spline spaces associated to fine zonotopal tilings are shown in Figure \ref{fig:zonobase}.

\begin{figure}[htbp]
    \centering
    \subfloat{\includegraphics[width=0.40\linewidth]{./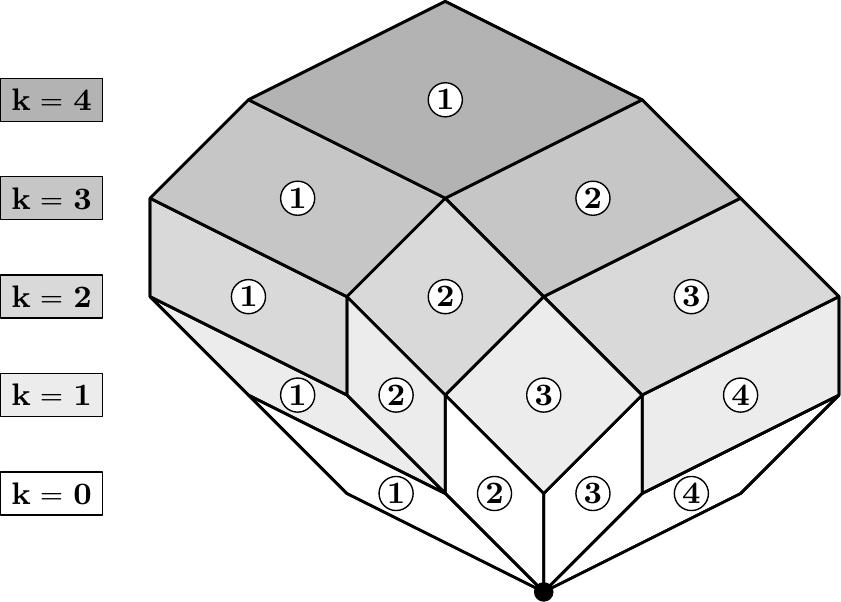}}\hspace{0.05\textwidth}%
    \subfloat{\includegraphics[width=0.40\linewidth]{./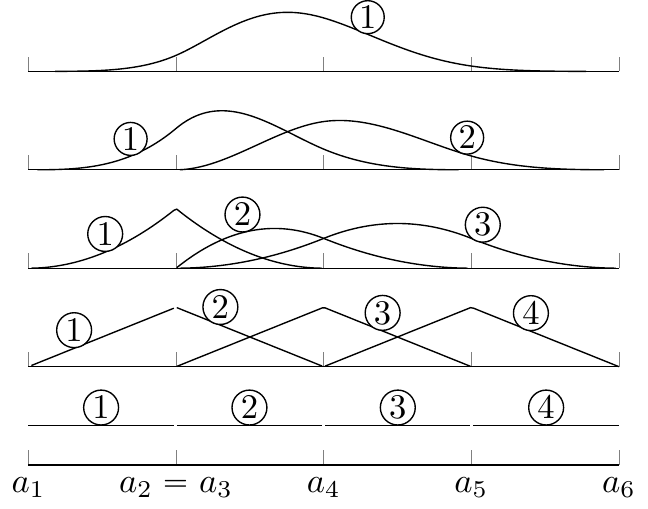}}
    \newline
    \subfloat{\includegraphics[width=0.40\linewidth]{./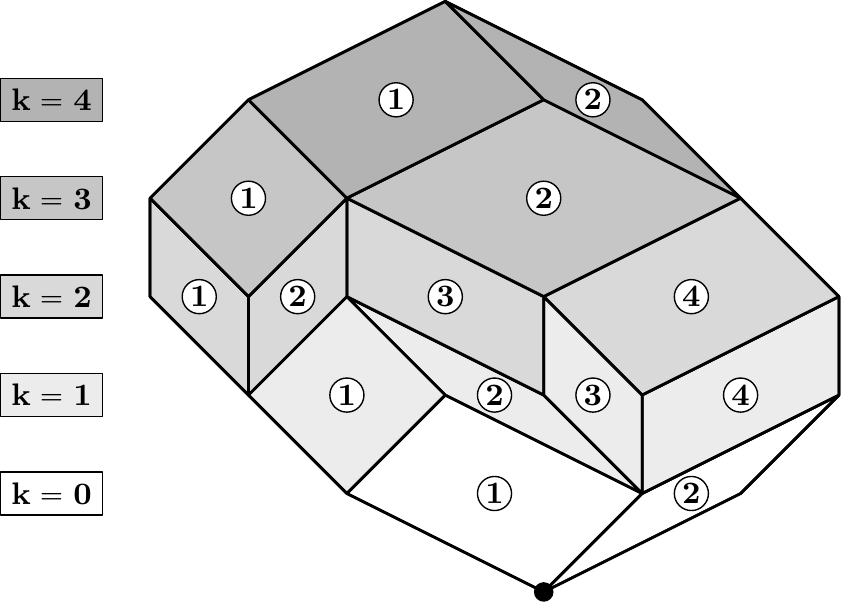}}\hspace{0.05\textwidth}%
    \subfloat{\includegraphics[width=0.40\linewidth]{./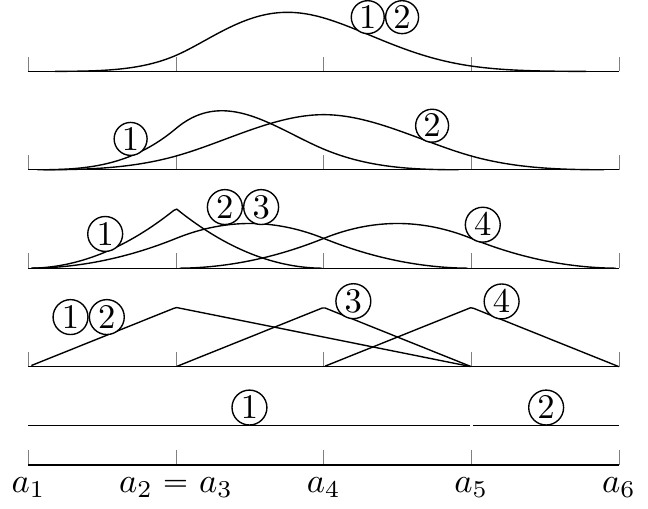}}
    \caption{Two possible fine zonotopal tilings of $Z(V)$ for the point configuration of Figure \ref{fig:zonotope} and their associated spline spaces of degrees $k=0,\ldots,4$ for the standard one-dimensional B-spline basis ({\em top}) and an alternative tiling ({\em bottom}).}
    \label{fig:zonobase}
\end{figure}

\subsection{Spline space construction}

Algorithms for the construction of Delaunay configurations (or, rather, their dual higher-order Voronoi diagrams) have been known for some time \cite{lee1982k}. In the two-dimensional case, Liu and Snoeyink \cite{liu2007quadratic,liu2008computations} have leveraged these results to propose an algorithm capable of iteratively constructing a large family of generalized Delaunay configurations of $A$ with any order $k\geq 0$, each yielding a set of polynomial-reproducing spline spaces. Their algorithm is based on the concept of the order-$k$ {\em centroid triangulation} \cite{schmitt1991delaunay,schmitt1998order,liu2008computations,el2011centroid}, which is a triangulation of the point set $A^{(k)}$ whose elements are the averages of $k$-element subsets of $A$. The order-$1$ centroid triangulation is simply an (arbitrary) triangulation of $A$, and an order-$k$ centroid triangulation is obtained from an order-$(k-1)$ centroid triangulation by a subdivision of the polygonal neighborhood of every vertex (its {\em link region}), with complete freedom in the choice of triangulation for each polygon. Every triangle obtained in this way is then assocated to a spline function of degree $k$.

In the two-dimensional case, this algorithm has been proven to converge for degrees $k\leq 3$ \cite{liu2008computations} and later for all degrees $k\geq 0$ by Schmitt \cite{schmitt2019bivariate}. However, one major hurdle for the extension to dimensions $d>2$ lies in the existence of non-convex regions that do not admit any triangulation, such as Schönhardt's polyhedron \cite{schonhardt1928zerlegung}. If such a region is encountered, the algorithm cannot continue, and there is no known condition under which the link regions are all guaranteed to be triangulable. Moreover, the case of affinely dependent and/or repeated points is excluded from the proofs and treated with symbolic perturbation, which creates ambiguous cases and does not allow to extend the proofs of convergence easily. This problem becomes even harder to address as the number of space dimensions grows. 

Given a fine zonotopal tiling $\mathcal{P}$ of $Z(V)$, we prove in this section that there exists a construction algorithm similar to Liu and Snoeyink's, with a suitable choice of triangulations, that is able to iteratively construct $\mathcal{P}$. This result rests on a natural definition of the {\em link region} $\mathcal{R}(I)$ associated to each subset $I\subset[n]$ (Definition \ref{def:reglink}), which generalizes naturally Liu and Snoeyink's notion of vertex link. 

\subsection{Relationship with centroid triangulations} 
Denoting by $H_r$ the hyperplane $H_r\df \{x\in\mathbb{R}^{d+1}:\,x_{d+1}=r\}$, the intersection
\begin{equation}
\label{eqn:ksetpoly}
Q^{(r)}\df Z(V)\cap H_r  
\end{equation}
corresponds to the set $Q^{(r)}\df\{\sum_{v\in B} [0,v_b]:\,B\subseteq [n],\,\norm{B}=r\}$, which is just the convex hull of the points $V^{(r)}\df\{\sum_{a_i\in B} (a_i,1),\,B\subseteq A,\,\norm{B}=r\}$. The region $Q^{(r)}$ is also known as (a multiple of) the {\em r-set polytope} of $A$ \cite{edelsbrunner1997cutting,schmitt2006k}. Just as vectors in $V$ can be interpreted projectively as points in $A$, vectors in $V^{(r)}$ can be projectively reduced to the set $A^{(r)}$ of all possible averages of $r$ points in $A$. The intersection $\mathcal{P}\cap H_r$ of a zonotopal tiling of $Z(V)$ with $H_r$ then produces a subdivision of $V^{(r)}$ \cite{olarte2019hypersimplicial,galashin2019higher} with (projective) vertices in $A^{(r)}$, which corresponds to a {\em centroid subdivision} in the sense of \cite{schmitt1991delaunay,schmitt1998order,liu2008computations,el2011centroid}. 

According to \eqref{eqn:zonotile}, the intersection of a tile $\Pi_{I,B}$, $\norm{I}=k$ with the hyperplane $H_r$ is an affine transformation of the hypersimplex $\Delta_{d+1,r-k}$, which has a positive dimension if and only if  $k<r<k+d+1$. Translated in the language of spline spaces, this means that the cells in the $r$-th centroid subdivision induced by $\mathcal{P}$ are slices of tiles associated via \eqref{eqn:splinedef} to the basis splines
\begin{equation}
\label{eqn:splinesk}
    \mathcal{SP}^{(r)}\df\{\M{\blank}{\Pi_{I,B}},\,r-d-1<k\df\norm{I}<r\}.
\end{equation}
For $d=2$, only two types of cells appear in each $r$-th centroid triangulation for $r>1$, corresponding to splines of degree $k=r-1$ and $k=r-2$. The corresponding hypersimplices $\Delta_{3,1}$ and $\Delta_{3,2}$ are just triangles, and therefore the subdivision is a so-called bicolored triangulation. This fact is widely known in the context of centroid triangulations \cite{lee1982k,schmitt1991delaunay,schmitt1998order,liu2008computations,el2011centroid}, where the corresponding triangles are called type-I and type-II triangles, respectively. In dimension $d>2$, the induced subdivision is no longer a triangulation, and the splines of all orders $r-d+1\leq k\leq r-2$ appear in the $r$-th centroid subdivision as hypersimplices, e.g., octahedra for $d=3$, $k=r-2$.

\subsection{Link regions} 
We define the {\em link region} of a subset $I\subseteq [n]$ as follows:
\begin{definition}
\label{def:reglink}
Given a fine zonotopal tiling $\mathcal{P}$ of $Z(V)$ and a subset $Q\subseteq [n]$, $\norm{Q}=k$, the regions $E^{(r)}(Q)$, $r\geq 0$, are defined as the union of simplices
\begin{equation}
\label{eqn:reglink2}
    E^{(r)}(Q)\df \bigcup_{\Pi_{I,B}\in\mathcal{E}^{(r)}(Q)}\ch(\{a_b\}_{b\in B}),    \end{equation}
with
\begin{equation}
    \mathcal{E}^{(r)}(Q)\df \left\{\Pi_{I,B}\in\mathcal{P}^{(r)}:\, B\cap Q=\varnothing,\, I\subseteq Q\right\}. \label{eqn:reglink3}
\end{equation}
The {\em link region} $\mathcal{R}(Q)$ of $Q$ is defined as $\mathcal{R}(Q)\df E^{(k)}(Q)$.
\end{definition}
An example of link region, and its relation to the regions \eqref{eqn:reglink2}, is shown in Figure \ref{fig:reglink}. Notice that $\mathcal{E}^{(k)}(Q)=\{\Pi_{I,B}\in\mathcal{P}:\,I=Q\}$ and that $\mathcal{E}^{(r)}(Q)=\varnothing$ for $r>k$.  It can be easily checked, though we will not do it explicitly here, that in two dimensions the above defined link region coincides with the interior of a vertex link as used in \cite{liu2007quadratic,liu2008computations,schmitt2019bivariate}.
However, Definition \ref{def:reglink} is more straightforward, more general, and can be applied to all point configurations in any dimension, allowing to easily prove some important properties, as we do presently.

\begin{proposition}
\label{prop:linktiling}
For any subset $Q\subseteq [n]$, define 
\begin{equation}
    \label{eqn:convq}
       \ch_Q(A)\df \ch(\{a_i\}_{i\not\in Q})
    \end{equation}
and let $r\geq 0$. Then, the following holds:
\begin{enumerate}[label=(\roman*)]
    \item The set of simplices $\mathcal{T}^{(r)}(Q)\df\{\ch(\{a_b\}_{b\in B}):\Pi_{I,B}\in\mathcal{E}^{(r)}(Q)\}$ forms a triangulation of $E^{(r)}(Q)$;\label{enum:linktiltri}
    \item The regions $E^{(r)}(Q)$ form a subdivision of $\ch_Q(A)$;\label{enum:linktilcover}
    \item The union of all simplices $\bigcup_{r\geq 0} \mathcal{T}^{(r)}(Q)$ triangulates $\ch_Q(A)$;\label{enum:linktiltot}
    \item The simplices $\mathcal{T}^{(k)}(Q)$ triangulate the link region $\mathcal{R}(Q)$.\label{enum:linktilreg}
    \end{enumerate}
\end{proposition}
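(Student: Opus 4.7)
The strategy is to reduce all four claims to Proposition \ref{prop:triangulation} applied to an appropriate induced tiling. The key observation is that, by Lemma \ref{lem:inducedtil}, the induced tiling $\mathcal{P}_{[n]\setminus Q}$ of $Z(V\setminus\{v_q\}_{q\in Q})$ consists of the tiles $\Pi_{I\setminus Q, B}$ for all $\Pi_{I,B}\in\mathcal{P}$ with $B\cap Q=\varnothing$. In particular, the order-zero tiles of $\mathcal{P}_{[n]\setminus Q}$, namely those with $I\setminus Q=\varnothing$, i.e. $I\subseteq Q$, are in bijection with the tiles in $\bigcup_{r\geq 0}\mathcal{E}^{(r)}(Q)$, and their base simplices coincide with $\bigcup_{r\geq 0}\mathcal{T}^{(r)}(Q)$.

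I would first establish \ref{enum:linktiltot}: applying Proposition \ref{prop:triangulation} to the tiling $\mathcal{P}_{[n]\setminus Q}$ shows that its order-zero simplices triangulate $\ch(\{a_i\}_{i\not\in Q})=\ch_Q(A)$. Through the correspondence above, these are exactly the simplices in $\bigcup_{r\geq 0}\mathcal{T}^{(r)}(Q)$, which gives \ref{enum:linktiltot}.

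Statements \ref{enum:linktiltri} and \ref{enum:linktilreg} then follow immediately: for each $r\geq 0$, the collection $\mathcal{T}^{(r)}(Q)$ is a subfamily of the triangulation of $\ch_Q(A)$ produced in \ref{enum:linktiltot}, so its simplices have pairwise disjoint interiors and meet along common faces; since by definition their union is $E^{(r)}(Q)$, they triangulate $E^{(r)}(Q)$. Taking $r=k=\norm{Q}$ specializes this to \ref{enum:linktilreg}.

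For \ref{enum:linktilcover}, Theorem \ref{thm:zonsub} implies that each maximal linearly independent subset $B\subset[n]$ is the base of a single tile $\Pi_{I,B}\in\mathcal{P}$ with a uniquely determined $\norm{I}$, so the families $\mathcal{T}^{(r)}(Q)$ for distinct values of $r$ are pairwise disjoint. Combined with \ref{enum:linktiltot}, this shows that the regions $E^{(r)}(Q)$ cover $\ch_Q(A)$ with pairwise disjoint interiors and meet along common faces of the underlying triangulation, which is the asserted subdivision. The only genuine content of the proof is unpacking the definitions to identify $\bigcup_r\mathcal{E}^{(r)}(Q)$ with the order-zero stratum of $\mathcal{P}_{[n]\setminus Q}$; once this is observed, no significant obstacle remains.
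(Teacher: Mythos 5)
Your proof is correct and follows essentially the same route as the paper's: both identify the order-zero tiles of the induced tiling $\mathcal{P}_{[n]\setminus Q}$ from Lemma \ref{lem:inducedtil} with $\bigsqcup_{r\geq 0}\mathcal{E}^{(r)}(Q)$, apply Proposition \ref{prop:triangulation} to obtain item \ref{enum:linktiltot}, and then deduce the remaining items from the disjointness of the families $\mathcal{E}^{(r)}(Q)$ across distinct $r$. No gaps.
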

\begin{proof}
Obviously, \ref{enum:linktiltri} implies \ref{enum:linktilreg} via Definition \ref{def:reglink}. Notice also that \ref{enum:linktiltot} implies both \ref{enum:linktilcover} and \ref{enum:linktiltri}, since it is clear from \eqref{eqn:reglink3} that $\mathcal{E}^{(r)}(Q)\cap\mathcal{E}^{(s)}(Q)=\varnothing$ if $r\neq s$. Therefore, the triangulation of $\ch_Q(A)$ decomposes into disjoint triangulations of the subregions $E^{(r)}(Q)$, $r=1,\ldots,k$.

Let now $\mathcal{P}(Q)$ be the induced tiling of $Z(V\setminus\{v_q\}_{q\in Q})$ via \eqref{eqn:indtil2}. Comparing \eqref{eqn:reglink3} with \eqref{eqn:indtil2} shows that the tiles $\{\Pi_{I,B}\in\bigsqcup_{r\geq 0}\mathcal{E}^{(r)}(Q)\}$ are in bijection with the tiles $\{\Pi_{\varnothing,B}\in\mathcal{P}(Q)\}\fd\mathcal{P}^{(0)}(Q)$. Therefore, by Proposition \ref{prop:triangulation}, the simplices $\{\ch(\{a_b\}_{b\in B}):\Pi_{\varnothing,B}\in\mathcal{P}^{(0)}(Q)\}$  form a triangulation of $\ch_Q(A)$, proving \ref{enum:linktiltot}.
\end{proof}

Based on these facts, we can replace Definition \ref{def:reglink} of the link region of $Q$, $\norm{Q}=k$, with
\begin{equation}
    \label{eqn:reglinkalt}
        \mathcal{R}(Q)\df \ch_Q(A)\setminus\left(\bigcup_{r=0}^{k-1}E^{(r)}(Q)\right),
\end{equation}
which is preferred from an algorithmic standpoint because it expresses $\mathcal{R}(Q)$ only in terms of the tiles $\Pi_{I,B}\in\mathcal{P}_r$ with $r<k$. Given that the simplex $\ch(\{a_b\}_{b\in B})$ is non-degenerate for any tile $\Pi_{I,B}$, Proposition \ref{prop:linktiling} implies that the region $\mathcal{R}(Q)\df E^{(k)}(Q)$ is empty if and only if its triangulation contains no simplices, i.e., if and only if $\mathcal{E}^{(k)}(Q)$ is empty. We have therefore the following corollary: 
\begin{corollary}
\label{cor:emptylink}
$\mathcal{R}(Q)$ is nonempty if and only if there is a tile $\Pi_{I,B}\in\mathcal{P}$ with $I=Q$. 
\end{corollary}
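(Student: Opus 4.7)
The plan is to extract the corollary directly from Proposition \ref{prop:linktiling}\ref{enum:linktilreg}, exploiting the extremal case $r=k=\norm{Q}$. The first step is to pin down what $\mathcal{E}^{(k)}(Q)$ looks like: by definition, a tile $\Pi_{I,B}\in\mathcal{E}^{(k)}(Q)$ satisfies $I\subseteq Q$ and $\norm{I}=k=\norm{Q}$, which forces $I=Q$. The condition $B\cap Q=\varnothing$ is then automatic since $I\cap B=\varnothing$ is built into the definition of a tile. Hence
\[
\mathcal{E}^{(k)}(Q)=\{\Pi_{I,B}\in\mathcal{P}:I=Q\},
\]
so the statement ``there is a tile $\Pi_{I,B}\in\mathcal{P}$ with $I=Q$'' is literally the same as ``$\mathcal{E}^{(k)}(Q)\neq\varnothing$''.

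Next, I would invoke Proposition \ref{prop:linktiling}\ref{enum:linktilreg}, which asserts that the simplices $\mathcal{T}^{(k)}(Q)=\{\ch(\{a_b\}_{b\in B}):\Pi_{Q,B}\in\mathcal{E}^{(k)}(Q)\}$ triangulate $\mathcal{R}(Q)$. If $\mathcal{E}^{(k)}(Q)=\varnothing$, this triangulation is empty, and $\mathcal{R}(Q)$, being a union over an empty index set, is empty as well. Conversely, if there exists a tile $\Pi_{Q,B}\in\mathcal{P}$, then $B$ is linearly independent (since $\detp{B}>0$ is required in \eqref{eqn:zonotile}), so the points $(a_b)_{b\in B}$ are affinely independent and $\ch(\{a_b\}_{b\in B})$ is a non-degenerate $d$-simplex. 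This simplex is contained in $\mathcal{R}(Q)$ by construction, whence $\mathcal{R}(Q)$ is nonempty (in fact has positive $d$-dimensional volume).

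There is no real obstacle here: the work has already been done in Proposition \ref{prop:linktiling}, and the corollary is essentially an unpacking of definitions once one observes that non-degeneracy of each tile's base simplex promotes the combinatorial statement ``$\mathcal{E}^{(k)}(Q)\neq\varnothing$'' to the geometric statement ``$\mathcal{R}(Q)\neq\varnothing$''. The only point worth being careful about is that the union in \eqref{eqn:reglink2} could, a priori, be nonempty even with ``thin'' contributions; the non-degeneracy of each $\ch(\{a_b\}_{b\in B})$ rules this out and makes the equivalence sharp in both directions.
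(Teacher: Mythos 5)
Your proof is correct and follows essentially the same route as the paper: both arguments reduce the corollary to the observation that $\mathcal{E}^{(k)}(Q)=\{\Pi_{I,B}\in\mathcal{P}:I=Q\}$ and then use the non-degeneracy of each simplex $\ch(\{a_b\}_{b\in B})$ together with Proposition \ref{prop:linktiling} to conclude that $\mathcal{R}(Q)=E^{(k)}(Q)$ is empty exactly when its triangulation has no simplices. Your explicit unpacking of why $I\subseteq Q$ with $\norm{I}=\norm{Q}$ forces $I=Q$ is a detail the paper states without proof, but the substance is identical.
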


Proposition \ref{prop:linktiling} and Corollary \ref{cor:emptylink} together imply that any fine zonotopal tiling $\mathcal{P}$ of $Z(V)$, and therefore the associated family of spline spaces, can be obtained iteratively by triangulating the link region associated to each set $I$ for every tile $\Pi_{I,B}$ through some choice of triangulation, similarly to Liu and Snoeyink's algorithm in two dimensions. This statement can be made precise as follows:

\begin{theorem}
\label{thm:constructiongen}
For every fine zonotopal tiling $\mathcal{P}$ of $Z(V)$ there exists a choice of triangulations $\mathcal{T}_I$, one for every link region $\mathcal{R}(I)$ associated to each subset $\{I\subseteq [n]:\Pi_{I,B}\in\mathcal{P}\mbox{ for some }B\}$, such that $\mathcal{P}$ (and its associated spline spaces at all orders $0\leq k\leq n-d-1$) can be constructed as follows:
\begin{enumerate}[label=(\roman*)]
\item Let $\mathcal{I}^{(0)}=\{\varnothing\}$;
\item For every $0\leq k\leq n-d-1$ and for every $I\in\mathcal{I}^{(k)}$, let $\mathcal{R}(I)$ be the link region computed via \eqref{eqn:reglinkalt}, and let $\mathcal{T}_I$ be its triangulation. Denoting the simplex $\Delta_B\df\ch(\{a_b\}_{b\in B})$, the subset of tiles $\mathcal{P}^{(k)}\df\{\Pi_{I,B}\in\mathcal{P}:\norm{I}=k\}$ is given by
\begin{equation}
    \mathcal{P}^{(k)}=\{\Pi_{I,B}:I\in\mathcal{I}^{(k)},\,\Delta_B\in\mathcal{T}_I\};
\end{equation}\label{i:tri}
\item Let 
\begin{equation}
\label{eqn:intsetsp}
    \mathcal{I}^{(k+1)}=\{I\sqcup\{b\}:\Pi_{I,B}\in\mathcal{P}^{(k)},\, b\in B,\, \mathcal{R}(I\sqcup\{b\})\neq\varnothing\}
\end{equation}\label{i:next}
\item Repeat \ref{i:tri} and \ref{i:next} until $k=n-d-1$,  $\mathcal{I}^{(k+1)}=\varnothing$. Then $\mathcal{P}=\bigsqcup_{k=0}^{n-d-1}\mathcal{P}^{(k)}$.
\end{enumerate}
\end{theorem}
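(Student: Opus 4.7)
The plan is to proceed by induction on $k$ with a canonical choice of triangulation dictated by $\mathcal{P}$ itself: for each $I\subseteq[n]$ that appears as the index set of some tile of $\mathcal{P}$, set
\[
\mathcal{T}_I\df\{\Delta_B:\Pi_{I,B}\in\mathcal{P}\}.
\]
By Proposition \ref{prop:linktiling}\ref{enum:linktilreg} (applied with $Q=I$) this is indeed a triangulation of the link region $\mathcal{R}(I)$. The inductive claim is that
\[
\mathcal{I}^{(k)}=\bigl\{I\subseteq[n]:\norm{I}=k,\,\Pi_{I,B}\in\mathcal{P}\text{ for some }B\bigr\},
\]
and then $\mathcal{P}^{(k)}$ built in step \ref{i:tri} automatically coincides with the actual level-$k$ tiles of $\mathcal{P}$ by the definition of $\mathcal{T}_I$.

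The base case $k=0$ is immediate from Proposition \ref{prop:triangulation}, since $\mathcal{R}(\varnothing)=\ch(A)$ and $\mathcal{T}_\varnothing$ consists of exactly the simplices coming from the level-$0$ tiles. For the inductive step, the inclusion $\mathcal{I}^{(k+1)}\subseteq\{I':\norm{I'}=k+1,\,\Pi_{I',B'}\in\mathcal{P}\text{ for some }B'\}$ is the easy direction: every $I'$ added in \eqref{eqn:intsetsp} is required to satisfy $\mathcal{R}(I')\neq\varnothing$, so Corollary \ref{cor:emptylink} produces a tile $\Pi_{I',B'}\in\mathcal{P}$.

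The reverse inclusion is the crux and the main obstacle. Given any tile $\Pi_{I',B'}\in\mathcal{P}$ with $\norm{I'}=k+1$, I must exhibit $b\in I'$ and a basis $B\ni b$ with $\Pi_{I'\setminus\{b\},B}\in\mathcal{P}$. My approach is to study the local structure of $\mathcal{P}$ at the bottom vertex $v^*\df\sum_{i\in I'}v_i$ of $\Pi_{I',B'}$: a tile $\Pi_{I,B}\in\mathcal{P}$ contains $v^*$ as a vertex precisely when $I\sqcup T=I'$ for some $T\subseteq B$, and such a tile has level $k+1-\norm{T}$. Hence level-$k$ tiles incident to $v^*$ correspond bijectively to decompositions $(I'\setminus\{b\},B)$ with $b\in B$, which are exactly what is needed. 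Now $\Pi_{I',B'}$ covers only the subcone generated by $\{v_{b'}\}_{b'\in B'}$ in a neighborhood of $v^*$, while the star of $v^*$ must tile a full neighborhood inside $Z(V)$; consequently other tiles of strictly smaller level must appear in this star. Walking through facet-adjacencies in this local fan starting from $\Pi_{I',B'}$ and using Proposition \ref{prop:faces} (which forces $\norm{I}$ to change by at most one per facet crossing), the first strict descent produces a level-$k$ tile sharing a facet with some level-$(k+1)$ tile incident to $v^*$, which by the facet-sharing criterion recalled in Section~\ref{sec:back:om} has precisely the form $\Pi_{I'\setminus\{b\},B}$ with $b\in B$.

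This structural statement about star fans is what I expect to be the hardest step; an alternative route would apply Proposition \ref{prop:linktiling} with $Q=I'\setminus\{b\}$ for varying $b\in I'$ and chase the resulting triangulations of $\ch_Q(A)$, but this ultimately rests on the same oriented-matroid fact. Once it is in hand, the inductive identification of $\mathcal{I}^{(k+1)}$ is complete, the canonical choice of $\mathcal{T}_{I'}$ yields $\mathcal{P}^{(k+1)}$ correctly, and the iteration terminates at $k=n-d-1$ because $\norm{I}+\norm{B}\leq n$ together with $\norm{B}=d+1$ forces $\norm{I}\leq n-d-1$.
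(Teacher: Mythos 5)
Your overall architecture is the same as the paper's: you take the canonical triangulations $\mathcal{T}_I=\{\Delta_B:\Pi_{I,B}\in\mathcal{P}\}$ justified by Proposition \ref{prop:linktiling}\ref{enum:linktilreg}, root the induction in Proposition \ref{prop:triangulation}, and get the inclusion $\mathcal{I}^{(k+1)}\subseteq\{I':\Pi_{I',B'}\in\mathcal{P}\}$ from Corollary \ref{cor:emptylink}. The problem is the reverse inclusion, which you correctly identify as the crux and then only sketch, explicitly deferring ``the structural statement about star fans.'' That statement is exactly where the paper does its real work, so as written the proposal is incomplete at the decisive step. For comparison, the paper argues entirely in the projected picture: it takes the nonempty link region $\mathcal{R}(Q)$ for $Q=I'$, uses the Minkowski-type identity $\sum_f\norm{\Delta_f}N_f=0$ over its boundary facets together with Stiemke's lemma to show that not all of the corresponding zonotope facets $\Pi_{Q,B_f\setminus\{b_f\}}$ can lie on the boundary of $Z(V)$, and then applies Proposition \ref{prop:faces} to the resulting internal facet; the neighbouring tile cannot have level $k+1$ (else $\Delta_f$ would not be a boundary facet of $\mathcal{R}(Q)$), so it has level $k$ and gives $Q=I\sqcup\{b\}$. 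Your vertex-star argument is a genuinely different (and in principle viable) route to the same fact, but it is not carried out.

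Beyond being unfinished, your sketch has a concrete difficulty in exactly the degenerate configurations the theorem is meant to cover. When partial sums of the $v_i$ coincide (e.g.\ repeated points, so $v_i=v_j$ for $i\neq j$), the geometric point $v^*=\sum_{i\in I'}v_i$ can simultaneously be the bottom vertex of level-$(k+1)$ tiles with index sets $I''\neq I'$ satisfying $\sum_{i\in I''}v_i=v^*$; the correspondence between tiles of the star and decompositions $I\sqcup T=I'$ is therefore not well defined from the geometric vertex alone. Your ``first strict descent'' then certifies that \emph{some} $I''$ with $\sum_{I''}v_i=v^*$ lies in $\mathcal{I}^{(k+1)}$, not necessarily $I'$ itself, unless you restrict the walk to stay among tiles whose index set is literally $I'$ before descending --- and that restricted subfan's facet-connectivity and the existence of a descent out of it are precisely what would need proof (essentially that the star of $v^*$ induces a fine zonotopal tiling of the localized zonotope). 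You would also need to justify, rather than assert, that the star is facet-connected and that a tile of strictly smaller level must occur in it (the latter follows from the direction $-v_i$, $i\in I'$, having negative last coordinate while $\mathrm{cone}(\{v_{b'}\}_{b'\in B'})$ does not, but say so). The paper's route through $\mathcal{R}(I')$ avoids all of this because the set $I'$ is fixed combinatorially from the start.
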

\begin{proof}
Item \ref{enum:linktilreg} of Proposition \ref{prop:linktiling} directly states that the tiles $\Pi_{I,B}\in\mathcal{P}^{(k)}$ (i.e., splines of degree $k$) are in bijection with the simplices $\ch(\{a_b\}_{b\in B})$ of a triangulation of the link region $\mathcal{R}(I)$. Furthermore, due to Corollary \ref{cor:emptylink}, all the tiles $\Pi_{I,B}\in\mathcal{P}^{(k)}$ are associated with a nonempty link region, which is always triangulable since Proposition \ref{prop:linktiling} exhibits one such triangulation. The only thing left to determine is the set $\{I:\Pi_{I,B}\in\mathcal{P}\}$.

Notice that $I\in\mathcal{I}^{(0)}$ implies $I=\varnothing$, and by \eqref{eqn:reglinkalt}, $\mathcal{R}(\varnothing)=\ch(A)$. Therefore, the tiles $\Pi_{\varnothing,B}$ (i.e., splines of degree $0$) are in bijection with the simplices of a triangulation of $\ch(A)$, in accordance with Proposition \ref{prop:triangulation}. 

Assume now that we have obtained all the tiles $\Pi_{I,B}\in\mathcal{P}^{(r)}$ for $r=0,\ldots,k$, and we want to determine the set $\mathcal{I}^{(k+1)}\df\{I:\Pi_{I,B}\in\mathcal{P}^{(k+1)}\}$. 

Let $Q\subset [n]$, $\norm{Q}=k+1$ be a set of indices such that $\mathcal{R}(Q)\neq\varnothing$, let $\{\Delta_f,f=1,\ldots,F\}$ be the $F$ boundary facets of $\mathcal{R}(Q)$, and for every $f=1,\ldots,F$, let $\Pi_{Q,B_f}$ and $b_f\in B_f$ be a tile in $\mathcal{P}^{(k+1)}$ such that $\Delta_f=\ch(\{a_i\}_{i\in B_f\setminus\{b_f\}})$. By Proposition \ref{prop:triangulation}, this tile is unique. Suppose that all the facets $\{\Pi_{Q,B_f\setminus\{b_f\}},f=1,\ldots,F\}$ lie on the boundary of $Z(V)$, let $\norm{\Delta_f}$ be the volume of $\Delta_f$ and let $N_f\in\mathbb{R}^d$ be its normalized normal vector. Without loss of generality, we can choose either all inward or all outward normal vectors so that $\sum_{f=1}^{F}\norm{\Delta_f}\left\langle N_f,a_{b_f}\right\rangle\leq 0$. Since $\mathcal{R}(Q)$ is a nonempty, bounded polyhedral region, we know that $\sum_{f=1}^{F}\norm{\Delta_f}N_f=0$, and we can therefore write the following linear dependency with positive coefficients $\norm{\Delta_f}_{f=1,\ldots,F}$, and $1$:
\begin{equation}
\label{eqn:posdep2}
\sum_{f=1}^{F}\norm{\Delta_f}\left(N_f,-\left\langle N_f, a_{b_f}\right\rangle\right)+(0,\sum_{f=1}^{F}\norm{\Delta_f}\left\langle N_f, a_{b_f}\right\rangle)=0.
\end{equation}
Fix a point $a_q$ with $q\in Q$. If, for all $f=1,\ldots,F$, $a_q$ were separated from $a_{b_f}$ by the hyperplane $\ch(\{a_i\}_{i\in B_f\setminus\{b_f\}})$, then we would have
\begin{empheq}{alignat=1}
\label{eqn:nosep}
\begin{aligned}
    (a_q,1)\cdot\left(N_f,-\left\langle N_f, a_{b_f}\right\rangle\right)=\left\langle N_f,a_q-a_{b_f}\right\rangle &< 0,\\ (a_q,1)\cdot(0,\sum_{f=1}^{F}\norm{\Delta_f}\left\langle N_f,a_{b_f}\right\rangle)=\sum_{f=1}^{F}\norm{\Delta_f}\left\langle N_f,a_{b_f}\right\rangle&\leq 0.
\end{aligned}
\end{empheq}
By Stiemke's Lemma \cite{stiemke1915positive}, \eqref{eqn:posdep2} and \eqref{eqn:nosep} cannot both be true. Therefore, there must be an index $f$ such that the facet $\Pi_{Q,B_f\setminus\{b_f\}}$ does not lie on the boundary of $Z(V)$. Observe also that $\Pi_{Q,B_f\setminus\{b_f\}}$ cannot be shared with another tile $\Pi_{I^\prime,B^\prime}\in\mathcal{P}^{(k+1)}$, since otherwise $I^\prime=Q$ and $\Delta_f$ would not be a boundary facet of $\mathcal{R}(Q)$. Therefore, by Proposition \ref{prop:faces}, there must be a tile $\Pi_{I,B}\in\mathcal{P}^{(k)}$ with $B_f\setminus\{b_f\}=B\setminus\{b\}=B_f\cap B$ and $Q=I\sqcup\{b\}$ for some $b\in B$. We conclude that
\begin{equation}
    \mathcal{I}^{(k+1)}\subseteq\{I\sqcup\{b\}:\Pi_{I,B}\in\mathcal{P}^{(k)},\,b\in B\}.
\end{equation}
After filtering out the sets $\{I\sqcup\{b\}:\mathcal{R}(I\sqcup\{b\})=\varnothing\}$, we are left exactly with \eqref{eqn:intsetsp}. 

Finally, when $\norm{Q}=n-d$, the set $\ch_Q(A)$ only contains $d$ points, and therefore the link region $\mathcal{R}(Q)$ has an empty interior. Therefore, $\mathcal{I}^{(n-d)}=\varnothing$, and the process stops. 
\end{proof}
This theorem states essentially that any fine zonotopal tiling of $Z(V)$ can be built using  a version of Liu and Snoeyink's algorithm, provided that we know in advance which triangulation needs to be applied to each subset $\{I:\Pi_{I,B}\in\mathcal{P}\}$. In other words, it proves that their algorithm is a universal way of constructing fine zonotopal tiling over $Z(V)$ and their associated spline spaces. However, this result stops short of providing a fully-formed construction algorithm, as it does not guarantee that any given choice of triangulations leads to a valid construction, only that such a choice exists. In the next section, we show that {\em regular} fine zonotopal tilings can be obtained by choosing a weighted Delaunay triangulation at each step, providing a sufficient condition on the triangulations that guarantees the convergence of the construction process.

Finally, we give a couple of interesting results regarding the combinatorial structure of spline spaces built by Theorem \ref{thm:constructiongen}. First, as a direct consequence of Theorem \ref{thm:zonsub}, we obtain the following simple characterization of the total number of spline functions:
\begin{corollary}
The total number of spline functions built by the process described in Theorem \ref{thm:constructiongen} on a point set $A$ with $\norm{A}=n$, summed over all orders $k=0,\ldots,n-d-1$, is always equal to the number of affinely independent subsets of $A$.
\end{corollary}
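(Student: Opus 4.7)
The plan is to identify the total count of spline functions produced by the process with the cardinality $\norm{\mathcal{P}}$ of the fine zonotopal tiling, and then invoke Theorem~\ref{thm:zonsub} (Shephard). First I would record that step~(ii) of Theorem~\ref{thm:constructiongen} produces exactly the tiles of $\mathcal{P}^{(k)}=\{\Pi_{I,B}\in\mathcal{P}:\norm{I}=k\}$ for $k=0,\ldots,n-d-1$, with each tile $\Pi_{I,B}$ giving one spline of degree $k$ via~\eqref{eqn:splinedef}. Since $\mathcal{P}=\bigsqcup_{k=0}^{n-d-1}\mathcal{P}^{(k)}$, the total number of splines produced by the algorithm is $\sum_{k=0}^{n-d-1}\norm{\mathcal{P}^{(k)}}=\norm{\mathcal{P}}$.

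Next I would apply Shephard's theorem: $\norm{\mathcal{P}}$ is independent of the particular fine zonotopal tiling of $Z(V)$ and equals the number of (maximal) linearly independent subsets of $V$. The translation back to $A$ is immediate from the projective lift $a_i\mapsto v_i=(a_i,1)$, which sets up a bijection between subsets of $V$ and subsets of $A$ under which linear independence of $(v_i)_{i\in S}$ in $\mathbb{R}^{d+1}$ is equivalent to affine independence of $(a_i)_{i\in S}$ in $\mathbb{R}^d$. Since $A$ affinely spans $\mathbb{R}^d$ by hypothesis, this bijection also preserves maximality, and all maximal affinely independent subsets of $A$ have the same uniform size $d+1$; these are the affinely independent subsets of $A$ that the statement enumerates.

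There is no genuine technical obstacle: the combinatorial content is entirely packaged in Theorem~\ref{thm:zonsub}, and the remaining work is bookkeeping matching spline functions, tiles of $\mathcal{P}$, and affinely independent subsets of $A$ through~\eqref{eqn:splinedef} and the definition of $\mathcal{P}^{(k)}$. The only point that deserves being articulated carefully is that the process of Theorem~\ref{thm:constructiongen} enumerates every tile of some fine zonotopal tiling of $Z(V)$ exactly once, so that no spline is omitted or double-counted; this is exactly what is guaranteed by the theorem itself, which I would therefore simply cite.
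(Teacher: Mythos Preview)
Your proposal is correct and follows exactly the route the paper intends: the corollary is stated there as ``a direct consequence of Theorem~\ref{thm:zonsub}'' with no further argument, and you have simply spelled out the bookkeeping (splines $\leftrightarrow$ tiles of $\mathcal{P}$, linear independence in $V$ $\leftrightarrow$ affine independence in $A$) that makes this consequence explicit. Your clarification that the relevant ``affinely independent subsets'' are the maximal ones of size $d+1$ is appropriate and matches the paper's statement of Shephard's theorem.
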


Next, we provide a characterization of the set of simplices
\begin{equation}
	\mathcal{T}^{(k)}\df\{\ch(\{a_b\}_{b\in B}):\Pi_{I,B}\in\mathcal{P}^{(k)}\}
\end{equation}
The intersection of these simplices defines the zones where all the spline functions are pure polynomials, and their boundaries define the zones of reduced regularity of spline functions, i.e., knots in $d=1$, knot lines in $d=2$ and more generally knot hypersurfaces in $d>2$.
\begin{proposition}
\label{prop:cover}
For all $0\leq k\leq n-d-1$, the simplices in $\mathcal{T}^{(k)}$ cover $\binom{k+d}{d}$ times the set $\ch_k(A)$.
\end{proposition}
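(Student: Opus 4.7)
The plan is to compute the covering multiplicity
\begin{equation*}
c_k(x) \df \sum_{\Delta\in\mathcal{T}^{(k)}}\indicf{\Delta}(x)
\end{equation*}
at a generic $x$ in the interior of $\ch_k(A)$ by summing, over every $k$-subset $Q\subseteq [n]$, the triangulation identities supplied by Proposition \ref{prop:linktiling}(iii), and then solving the resulting triangular recurrence by induction on $k$.

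For every $Q\subseteq[n]$ with $\norm{Q}=k$, Proposition \ref{prop:linktiling}(iii) asserts that $\bigsqcup_{r=0}^{k}\mathcal{T}^{(r)}(Q)$ is a triangulation of $\ch_Q(A)$, so outside the Lebesgue-null union of simplex boundaries one has
\begin{equation*}
\indicf{\ch_Q(A)}(x) = \sum_{r=0}^{k}\sum_{\Pi_{I,B}\in\mathcal{E}^{(r)}(Q)}\indicf{\Delta_B}(x).
\end{equation*}
Summing over all $\binom{n}{k}$ such $Q$ and restricting to $x\in\ch_k(A) = \bigcap_{\norm{Q}=k}\ch_Q(A)$ collapses the left-hand side to the constant $\binom{n}{k}$. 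Exchanging the order of summation on the right, for each fixed tile $\Pi_{I,B}\in\mathcal{P}^{(r)}$ the number of admissible $Q$ (those with $\norm{Q}=k$, $I\subseteq Q$, $B\cap Q=\varnothing$) equals $\binom{n-r-d-1}{k-r}$, since $Q\setminus I$ ranges freely over the $(k-r)$-subsets of $[n]\setminus(I\sqcup B)$. This yields the recurrence
\begin{equation*}
\binom{n}{k} = \sum_{r=0}^{k}\binom{n-r-d-1}{k-r}\,c_r(x).
\end{equation*}

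The argument then concludes by induction on $k$. The base case $c_0(x)=1=\binom{d}{d}$ is exactly Proposition \ref{prop:triangulation}. For the inductive step, the leading coefficient $\binom{n-k-d-1}{0}=1$ makes the recurrence triangular and uniquely determines $c_k(x)$ from the hypothesized values $c_r(x)=\binom{r+d}{d}$, $r<k$; the claim $c_k(x)=\binom{k+d}{d}$ reduces then to the binomial identity
\begin{equation*}
\sum_{r=0}^{k}\binom{n-d-1-r}{k-r}\binom{r+d}{d} = \binom{n}{k},
\end{equation*}
which is just the coefficient of $x^k$ in the Cauchy product $(1-x)^{-(n-d-k)}(1-x)^{-(d+1)} = (1-x)^{-(n-k+1)}$, i.e., a standard Chu--Vandermonde convolution.

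The only technical subtlety is that the pointwise identities above hold only off a Lebesgue-null set of simplex boundaries; however, the statement that $\mathcal{T}^{(k)}$ covers $\ch_k(A)$ with multiplicity $\binom{k+d}{d}$ is naturally understood in this generic (equivalently, integrated) sense, and the boundary effects do not affect the conclusion. I expect no genuine obstacle beyond bookkeeping the binomial identity at the end of the induction.
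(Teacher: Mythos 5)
Your proposal is correct and follows essentially the same path as the paper's own proof: sum the triangulation identity from Proposition \ref{prop:linktiling}\ref{enum:linktiltot} over all $k$-subsets $Q$, swap the order of summation to obtain the coefficient $\binom{n-r-d-1}{k-r}$ for each tile of order $r$, invoke the inductive hypothesis (noting $\ch_k(A)\subseteq\ch_r(A)$ for $r<k$), and close with the same Vandermonde convolution. The only cosmetic difference is that the paper verifies the binomial identity via multiset coefficients while you use the generating-function form $(1-x)^{-(n-d-k)}(1-x)^{-(d+1)}=(1-x)^{-(n-k+1)}$; these are equivalent.
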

\begin{proof}
By induction over $k$. The simplices in $\mathcal{T}^{(0)}$ form a triangulation of $\ch(A)$ by Proposition \ref{prop:triangulation}, and therefore cover it exactly once. Assume now that the proposition is true for every $r<k$. By Property \ref{enum:linktiltot} of Proposition \ref{prop:linktiling}, for any subset $Q\subset [n]$ with $\norm{Q}=k$, the simplices $\{\ch(\{a_b\}_{b\in B}):\,\Pi_{I,B}\in\mathcal{E}^{(r)}(Q),\, r\leq k\}$ triangulate $\ch_Q(A)$, i.e.,
\begin{equation}
\label{eqn:sumq}
	\sum_{r=0}^k\sum_{\Pi_{I,B}\in\mathcal{E}^{(r)}(Q)}\indicf{B}=\indicf{\ch_Q(A)},
\end{equation}
where $\indicf{\ch_Q(A)}:\mathbb{R}^d\mapsto\mathbb{R}$ is the indicator function of the set $\ch_Q(A)\subset\mathbb{R}^d$ and $\indicf{B}$ is the indicator function of $\ch(\{a_b\}_{b\in B})$. We sum this expression over all subsets $Q\subset [n]$, $\norm{Q}=k$. Each tile $\Pi_{I,B}\in\mathcal{P}^{(r)}$ appears in the sum whenever $I\sqcup J=Q$ for some subset $J\subset [n]$, $\norm{J}=k-r$ with $J\cap B=\varnothing$. Therefore, the occurrences of a tile of $\mathcal{P}^{(r)}$ in the sum correspond to the possible choices of $\norm{Q\setminus I}=k-r$ indices among the $\norm{\overline{I\sqcup B}}=n-r-d-1$ which are available. We obtain
\begin{equation}
\label{eqn:sumq2}
\sum_{\Pi_{I,B}\in\mathcal{P}^{(k)}}\indicf{B}+\sum_{r=0}^{k-1}\binom{n-r-d-1}{k-r}\sum_{\Pi_{I,B}\in\mathcal{P}^{(r)}}\indicf{B}=\sum_{Q\subset [n],\norm{Q}=k}\indicf{\ch_Q(A)}.
\end{equation}
By induction, the simplices derived from the tiles in $\mathcal{P}^{(r)}$ cover the region $\ch_r(A)\supseteq\ch_k(A)$ exactly $\binom{r+d}{d}$ times, and the sum on the right covers $\ch_k(A)$ exactly $\binom{n}{k}$ times. Using multiset notation and the Vandermonde identity, we can derive
\begin{align}
\label{eqn:vandermonde}
    \sum_{r=0}^k\binom{n-r-d-1}{k-r}\binom{r+d}{r}&=\sum_{r=0}^k\multichoose{n-k-d}{k-r}\multichoose{d+1}{r}\\
    &=\multichoose{n-k+1}{k}=\binom{n}{k}.
\end{align}
Separating the term with $r=k$ in the first sum in \eqref{eqn:vandermonde}, we conclude that the first term in \eqref{eqn:sumq2}, i.e. the set of all simplices in $\mathcal{T}^{(k)}$, must cover the region $\ch_k(A)$ exactly $\binom{k+d}{d}$ times.
\end{proof}

Notice that in general it is not possible to extract from the set $\mathcal{T}^{(k)}$ a collection of $\binom{k+d}{d}$ independent triangulations, as these simplices form in general a {\em branched cover} of $\ch(A)$. In practice, $\mathcal{T}^{(k)}$ forms a complex web of overlapping simplices that contains many complex intersections, see e.g. Figure \ref{fig:reglink}. 
\begin{figure}[htbp]
    \centering
    \subfloat{\includegraphics[width=0.25\textwidth]{./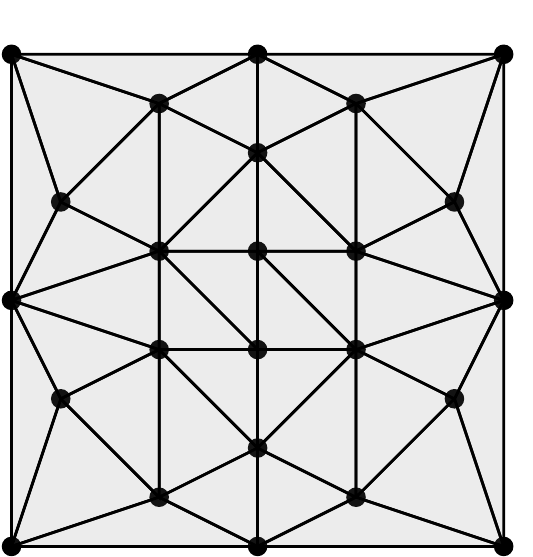}}
    \subfloat{\includegraphics[width=0.25\textwidth]{./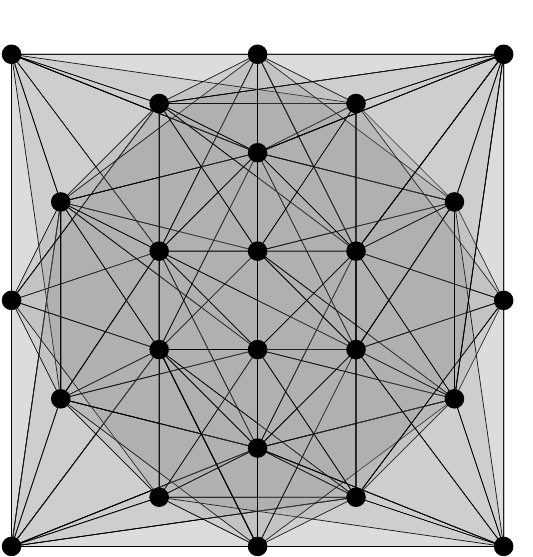}}
    \subfloat{\includegraphics[width=0.25\textwidth]{./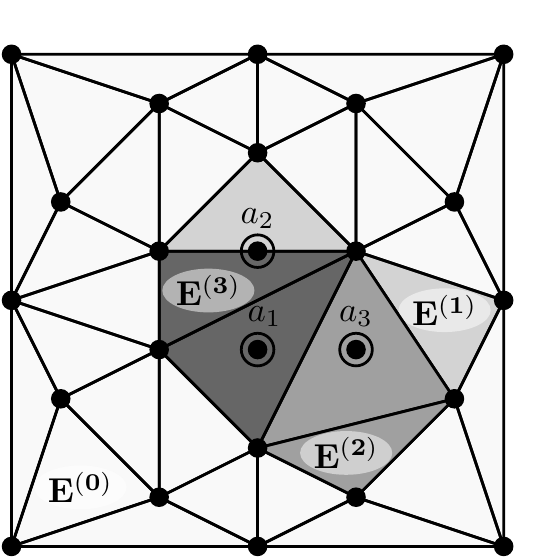}}
    \subfloat{\includegraphics[width=0.25\textwidth]{./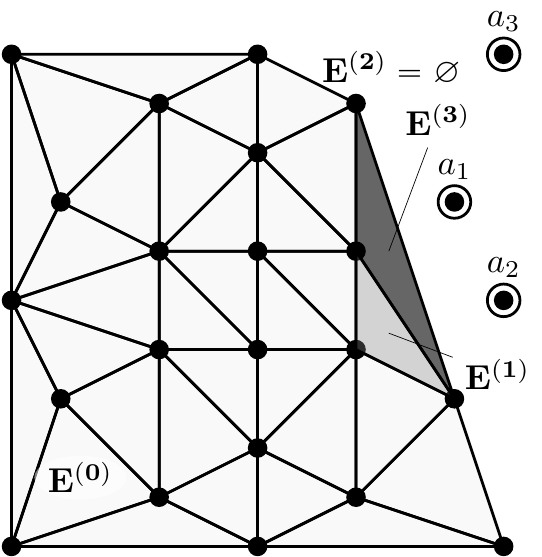}}
	\caption{\small For a point configuration $A\subset\mathbb{R}^2$ with collinear points, the sets $\mathcal{T}^{(k)}$ for $k=0$ and $k=2$, with the shading indicating the number of simplices covering each point, and the regions $E^{(r)}(Q)$ of \eqref{eqn:reglink2} for two possible choices of $Q\df(a_1,a_2,a_3)$.}
    \label{fig:reglink}
\end{figure}

\section{Spline spaces from regular fine zonotopal tilings}
\label{sec:del}
We specialize the results of the previous section to spline spaces derived from {\em regular} fine zonotopal tilings. Given a polytope $P\subset\mathbb{R}^{d+2}$, we define its {\em upper convex hull} as the set of faces of $P$ whose outward normal vector has a positive $(d+1)$-th component.
\begin{definition}
A zonotopal tiling $\mathcal{P}$ of $Z(V)\subset\mathbb{R}^{d+1}$ is {\em regular} if its tiles are precisely the projections along the $(d+1)$-th coordinate of the faces in the upper convex hull of another zonotope $\tilde{Z}\subset\mathbb{R}^{d+2}$.
\end{definition}
We show that this special case corresponds exactly to simplex splines associated to weighted Delaunay configurations. The special properties of these tilings then allow us to derive a set of practical algorithms for the construction of the spline spaces and the determination and evaluation of all spline functions that are supported on a given point $x\in\mathbb{R}^d$.

\subsection{Delaunay triangulations and regular zonotopal tilings}
\label{sec:del:const}
Let $h:A\mapsto\mathbb{R}$ be a {\em height function} over $A$. Let $\mathcal{T}$ be a set of simplices that triangulate $\ch(A)$ with vertices in $A$. For every subset $B\subseteq[n]$, $\norm{B}=d+1$ such that there is a simplex $\Delta\df\ch(\{a_b\}_{b\in B})\in\mathcal{T}$, let us order $B$ such that $\det((a_b,1)_{b\in B})>0$. If, for every $i\in A\setminus B$,
\begin{equation}
\label{eqn:delcond}
    \det((a_b,h(a_b),1)_{b\in B},(a_i,h(a_i),1))<0,
\end{equation}
then the triangulation $\mathcal{T}$ is called a {\em weighted Delaunay triangulation} with height function $h$. If the points of $A$ are in general position, plugging $h(a)=\dnorm{a}^2$ in \eqref{eqn:delcond} yields the usual Delaunay triangulation, see e.g. \cite{guibas1985primitives}.

In order for the Delaunay triangulation to exist and be unique, a bit of care is needed when choosing the height function $h$. 
\begin{definition}
\label{def:genericw}
A height function $h$ is {\em generic} if, given the lifted point cloud
\begin{equation}
\tilde{A}\df\{(a,h(a)),\,a\in A\}\subset\mathbb{R}^{d+1},
\end{equation}
the only affinely dependent subsets of $d+2$ points in $\tilde{A}$ lie on a vertical plane, i.e., a plane whose normal $N\in\mathbb{R}^{d+1}$ satisfies $N_{d+1}=0$.
\end{definition}
Notice that affinely dependent subsets are indeed allowed on vertical planes, and thus the points in $A$ can be repeated or affinely dependent. If $h$ is generic, then the determinant in \eqref{eqn:delcond} is always nonzero, and the weighted Delaunay triangulation is unique. Hereafter, we will only consider generic height functions. We can now use \eqref{eqn:delcond} to specialize Theorem \ref{thm:constructiongen} to weighted Delaunay triangulations.
\begin{theorem}
\label{thm:constructiondel}
Let $h$ be a generic height function on $A$, and for every set $Q\subseteq [n]$ let $\mathcal{T}_Q(h)$ be the weighted Delaunay triangulation of $\ch_Q(A)$ with height function $h$. Then the procedure outlined in Theorem \ref{thm:constructiongen} with the choice $\mathcal{T}_I=\mathcal{T}_I(h)$ always produces a regular fine zonotopal tiling $\mathcal{P}(h)$.
\end{theorem}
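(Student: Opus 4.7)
My plan is to first define $\mathcal{P}(h)$ directly via a lifting construction, then show that its tile decomposition by order coincides with the iterative weighted-Delaunay construction of Theorem \ref{thm:constructiongen}, so that the two procedures necessarily produce the same object.

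Concretely, I would set $\tilde v_i\df(v_i,-h(a_i))\in\mathbb{R}^{d+2}$ for each $i\in[n]$ (the sign of the added coordinate is chosen to match the paper's Delaunay convention \eqref{eqn:delcond}), form the lifted zonotope $\tilde Z(\tilde V)\df\sum_{i=1}^n[0,\tilde v_i]$, and let $\mathcal{P}(h)$ be the fine zonotopal tiling of $Z(V)$ obtained by projecting the upper faces of $\tilde Z(\tilde V)$ along the last coordinate. Genericity of $h$ (Definition \ref{def:genericw}) ensures that this projection is bijective on full-dimensional upper faces, so $\mathcal{P}(h)$ is well-defined and regular by construction.

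The heart of the argument is then the following characterization lemma: $\Pi_{I,B}\in\mathcal{P}(h)$ if and only if $B\cap I=\varnothing$ and $\ch(\{a_b\}_{b\in B})$ is a simplex of the weighted Delaunay triangulation $\mathcal{T}_I(h)$ of $\ch_I(A)$. I would prove this by a direct supporting-hyperplane analysis. The lifted parallelepiped $\tilde\Pi_{I,B}\df\sum_{i\in I}\tilde v_i+\sum_{b\in B}[0,\tilde v_b]$ is an upper facet of $\tilde Z(\tilde V)$ iff there exists $N=(N',N_{d+2})\in\mathbb{R}^{d+2}$ with $N_{d+2}>0$ that maximizes $\scal{\blank}{N}$ on $\tilde Z(\tilde V)$ exactly on $\tilde\Pi_{I,B}$, i.e.\ $\scal{\tilde v_b}{N}=0$ for $b\in B$, $\scal{\tilde v_i}{N}>0$ for $i\in I$, and $\scal{\tilde v_j}{N}<0$ for $j\in[n]\setminus(I\sqcup B)$. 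Introducing the affine function $\phi(a)\df\scal{(a,1)}{N'}/N_{d+2}$, these conditions translate into: $\phi$ interpolates $h$ on $B$, $h<\phi$ on $I$, and $h>\phi$ on $[n]\setminus(I\sqcup B)$. The first is just the defining condition of $\phi$; the third is exactly \eqref{eqn:delcond}, as one checks by observing that the left-hand side of \eqref{eqn:delcond} is a strictly decreasing affine function of $h(a_j)$ which vanishes at $h(a_j)=\phi(a_j)$.

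With this characterization, Theorem \ref{thm:constructiondel} follows at once. By Theorem \ref{thm:constructiongen}, $\mathcal{P}(h)$ admits a reconstruction by the iterative procedure with the unique choice of triangulations $\mathcal{T}_I=\{\ch(\{a_b\}_{b\in B}):\Pi_{I,B}\in\mathcal{P}(h)\}$; by the characterization lemma, this choice is precisely $\mathcal{T}_I(h)$. Hence running the procedure with $\mathcal{T}_I=\mathcal{T}_I(h)$ reproduces $\mathcal{P}(h)$ level by level, and the output is a regular fine zonotopal tiling. The main obstacle will be the careful bookkeeping of signs in the supporting-hyperplane analysis (to reconcile the orientation of the upper hull with the paper's Delaunay-determinant convention), and verifying that residual affine dependencies in $A$ not eliminated by genericity of $h$ (cf.\ Definition \ref{def:genericw}) do not spoil the argument; the latter follows because such dependencies lift only to vertical affine dependencies in $\tilde A$, which affect neither the selection of upper faces nor the sign of \eqref{eqn:delcond}.
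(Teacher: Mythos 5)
Your proposal is correct and follows essentially the same route as the paper: lift the vector configuration by the (generic) height function, define $\mathcal{P}(h)$ as the projection of the upper hull of the lifted zonotope (regular by construction), and use a supporting-hyperplane/normal-vector computation to show that the simplices $\ch(\{a_b\}_{b\in B})$ attached to the tiles $\Pi_{I,B}$ satisfy the weighted Delaunay condition \eqref{eqn:delcond} on each link region, so that Theorem \ref{thm:constructiongen} applied with $\mathcal{T}_I=\mathcal{T}_I(h)$ reproduces $\mathcal{P}(h)$. Your version merely makes explicit, as an if-and-only-if characterization, what the paper obtains from the one implication plus uniqueness of the weighted Delaunay triangulation for generic $h$.
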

\begin{proof}
It is easy to prove using the lifting property \eqref{eqn:delcond}. See also \cite{edelsbrunner2018,edelsbrunner2019poisson} and especially \cite{santos1996delaunay} for similar constructions and an interesting generalization.

Let $\tilde{A}=\{\tilde{a}_i\df(a_i,h(a_i)),\,i=1,\ldots,n\}\subset\mathbb{R}^{d+1}$ be the point cloud lifted by $h$, $\tilde{V}\df\{(a_i,h(a_i),1):i=1,\ldots,n\}$ be the associated vector configuration and $Z(\tilde{V})$ be the zonotope built on $\tilde{V}$. Denoting by $\pi:\mathbb{R}^{d+2}\mapsto\mathbb{R}^{d+1}$ the projection that removes the $(d+1)$-th coordinate, it is easy to check that $\pi(Z(\tilde{V}))=Z(V)$. We define $\mathcal{P}(h)$ as the regular zonotopal tiling
\begin{equation}
\label{eqn:regzt}
    \mathcal{P}(h)\df\{\pi(\tilde{\Pi}_{I,B}):\tilde{\Pi}_{I,B}\mbox{ is in the upper convex hull of }Z(\tilde{V})\}.
\end{equation}
The fact that \eqref{eqn:regzt} is indeed a regular zonotopal tiling of $Z(V)$ was proven e.g. in \cite[Lemma~2.2]{billera1992fiber}. Since $\tilde{\Pi}_{I,B}$ is a boundary facet of $Z(\tilde{V})$, we can follow the same reasoning as in the proof of item \ref{item:faces2} of Proposition \ref{prop:faces}. After selecting the face normal $N_B$ of $\tilde{\Pi}_{I,B}$ with $(N_B)_{d+1}>0$, given that $h$ is generic and the face is not vertical, we conclude that the determinant
\begin{equation}
\label{eqn:detdel}
    \det((a_b,h(a_b),1)_{b\in B},(a_i,h(a_i),1))
\end{equation}
is positive for all $i\in I$ and negative for all $i\in\overline{I\sqcup B}$, while the condition $(N_B)_{d+1}>0$ translates to $\det((a_b,1)_{b\in B})>0$. Since only the points $\{a_i\}_{i\in\overline{I\sqcup B}}$ appear in the link region $\mathcal{R}(I)$, the weighted Delaunay condition \eqref{eqn:delcond} is satisfied for all the points in $\mathcal{R}(I)$.
\end{proof}

Theorems \ref{thm:constructiongen} and  \ref{thm:constructiondel} together give a practical construction algorithm for all regular fine zonotopal tilings of $Z(V)$, and therefore for their associated spline spaces. Restricting the construction to the the special case $d=2$ and to points in generic position, this process reduces to a version of Liu and Snoeyink's construction algorithm \cite{liu2007quadratic,liu2008computations,schmitt2019bivariate}. 

\subsection{Splines supported on a point}
\label{sec:del:supp}
In this subsection we show that, in the case of spline spaces associated to regular fine zonotopal tilings, there exists an efficient process to determine all the spline functions up to a given degree $k\geq 0$ that are supported on a given point $x\in\mathbb{R}^d$. This is equivalent, by \eqref{eqn:splinedef}, to finding all the tiles $\Pi_{I,B}\in\mathcal{P}(h)$ such that $x\in\ch(\{a_i\}_{i\in I\sqcup B})$. In this case, by extension, we say that the tile $\Pi_{I,B}$ is supported on $x$.

For spline functions of degree $0$, the task is particularly simple. In fact, since the simplices $\mathcal{T}^{(0)}$ triangulate $\ch(A)$ (Proposition \ref{prop:triangulation}), whenever $x\in\ch(A)$ there is one and only one tile $\Pi_{\varnothing,Z}$ supported on $x$. Computationally, $\Pi_{\varnothing,Z}$ can be found efficiently via a point location query on a triangulation, for which many efficient algorithms exist, see e.g. \cite{guttman1984r,beckmann1990r}. We prove in the remainder of this section that all the other tiles $\Pi_{I,B}$ (and hence spline functions) supported on $x$ can be found from $\Pi_{\varnothing,Z}$ using a suitable orientation, induced by $x$, of the adjacency graph $\mathcal{G}$ of $\mathcal{P}(h)$, i.e., the simple, connected graph having the tiles of $\mathcal{P}(h)$ as vertices and their connecting internal facets as edges.

We assume hereafter that the test point $x\in\mathbb{R}^d$ is {\em generic}, i.e., it satisfies the following condition:
\begin{equation}
\label{eqn:xgeneral}
	x\not\in\aff(\{a_c\}_{c\in C})\mbox{ for all internal facets }\Pi_{J,C}\mbox{ of }\mathcal{P}.
\end{equation}
This excludes from the possible values of $x$ a zero-measure subset of $\mathbb{R}^d$, and as a consequence, all the following results must be understood to hold almost everywhere. This restriction can be easily lifted using some well-known techniques such as symbolic perturbation. We can define an orientation $o_x$, depending on $x$, on the adjacency graph $\mathcal{G}$ of $\mathcal{P}$ as follows. Let $\Pi_{J,C}$ be a facet shared by two tiles $\Pi_{I,B}$ and $\Pi_{I^\prime,B^\prime}$, with normal vector $N_C\in\mathbb{R}^{d+1}$. Then we define the orientation of the corresponding edge in $\mathcal{G}$ as $\Pi_{I,B}\rightarrow\Pi_{I^\prime,B^\prime}$ if and only if
\begin{equation}
\label{eqn:orient}
    \sign\left(\scal{N_C}{(x,1)}\right)=\sign\left(\scal{N_C}{z^\prime-z}\right)
\end{equation}
for any $z^\prime\in\Pi_{I^\prime,B^\prime}$, $z\in\Pi_{I,B}$. In other words, we pick the direction of $N_C$ that leads to a positive scalar product with $(x,1)$, and we use it to orient the corresponding edge.

The orientation $o_x$ defined by \eqref{eqn:orient} yields a directed graph $(\mathcal{G},o_x)$. In the case of regular tilings, this graph is acyclic.
\begin{lemma}
\label{lem:acyclic}
Let $\mathcal{P}(h)$ be a regular fine zonotopal tiling of $Z(V)$ with generic height function $h$. Then the directed graph $(\mathcal{G},o_x)$ is acyclic for every generic $x\in\mathbb{R}^d$. The same is true for any fine zonotopal tiling $\mathcal{P}$ of $Z(V)$, regular or not, when $d=1$.
\end{lemma}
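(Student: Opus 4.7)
The plan is to construct a real-valued potential function $f_x$ on the tiles of $\mathcal{P}$ (the vertices of $\mathcal{G}$) that strictly increases along every directed edge of $(\mathcal{G},o_x)$, which immediately forbids directed cycles.

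For the regular case I exploit the lift. Each tile $\Pi_{I,B}$ corresponds to an upper-hull face $\tilde{\Pi}_{I,B}$ of $Z(\tilde{V})\subset\mathbb{R}^{d+2}$, realized as the argmax over $Z(\tilde{V})$ of a linear functional $\tilde{N}_{I,B}\in\mathbb{R}^{d+2}$, characterized by $\scal{\tilde{N}_{I,B}}{\tilde{v}_i}>0$ for $i\in I$, $=0$ for $i\in B$, and $<0$ for $i\in\overline{I\sqcup B}$. The upper-hull hypothesis forces the $h$-component of $\tilde{N}_{I,B}$ to be strictly positive, and after normalizing $\tilde{N}_{I,B}=(p_{I,B},1,q_{I,B})\in\mathbb{R}^d\times\mathbb{R}\times\mathbb{R}$ I define
\begin{equation*}
    f_x(\Pi_{I,B})\df\scal{p_{I,B}}{x}+q_{I,B}.
\end{equation*}

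To verify the monotonicity of $f_x$ along an oriented edge $\Pi_{I,B}\to\Pi_{I^\prime,B^\prime}$ across a shared facet $\Pi_{J,C}$, I would observe that $\tilde{N}_{I,B}-\tilde{N}_{I^\prime,B^\prime}$ has vanishing $h$-component (both normals share the normalization) and annihilates every $\tilde{v}_c$, $c\in C$ (both lifted faces contain the corresponding sub-face). Its projection to $\mathbb{R}^{d+1}$, namely $(p_{I,B}-p_{I^\prime,B^\prime},q_{I,B}-q_{I^\prime,B^\prime})$, is therefore orthogonal to every $v_c=(a_c,1)$ with $c\in C$ and must equal $\lambda N_C$ for some scalar $\lambda$. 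This yields the clean identity
\begin{equation*}
    f_x(\Pi_{I,B})-f_x(\Pi_{I^\prime,B^\prime})=\lambda\scal{N_C}{(x,1)}.
\end{equation*}
I would pin down the sign of $\lambda$ by evaluating $\scal{\tilde{N}_{I,B}-\tilde{N}_{I^\prime,B^\prime}}{\tilde{v}_b}$ and $\scal{\cdot}{\tilde{v}_{b^\prime}}$ (with $\{b\}=B\setminus B^\prime$, $\{b^\prime\}=B^\prime\setminus B$) via the sign characterization, and enumerating the four adjacency configurations of the excerpt ($I=I^\prime$; $I=I^\prime\sqcup\{b^\prime\}$; $I^\prime=I\sqcup\{b\}$; $I\sqcup\{b\}=I^\prime\sqcup\{b^\prime\}$). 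The uniform outcome I expect is $\sign(\lambda)=-\sign(\scal{N_C}{z^\prime-z})$, and combined with the orientation convention \eqref{eqn:orient} together with the genericity of $x$, this gives $f_x(\Pi_{I,B})<f_x(\Pi_{I^\prime,B^\prime})$ along every directed edge, so $f_x$ is a strict potential function and $(\mathcal{G},o_x)$ is acyclic. I expect the four-case verification of the sign of $\lambda$ to be the main bookkeeping obstacle; it is computational rather than conceptual.

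For the $d=1$ case with a possibly non-regular tiling, the lifting approach may not apply directly and I would use the planar structure of the zonogon instead. Each edge of $\mathcal{G}$ lies on a unique pseudoline $\gamma_c$ of the underlying planar arrangement (the one corresponding to its facet type $C=\{c\}$), and all edges on $\gamma_c$ are oriented consistently along $\gamma_c$ in a common direction fixed by $\sign(x-a_c)$. A directed cycle in $(\mathcal{G},o_x)$ would thus induce a closed walk in the oriented planar pseudoline arrangement, whose topology, together with the strict monotonicity of the orientation on each individual pseudoline, should force a contradiction. Executing this combinatorial argument rigorously in the presence of potentially non-realizable pseudoline arrangements is the part of the plan that leaves the most room for elaboration, but the reasoning is purely combinatorial and does not rely on a global lift.
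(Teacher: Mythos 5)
Your argument for the regular case is essentially the paper's proof in a slightly cleaner guise. The vector $g_{I,B}\df(p_{I,B},q_{I,B})\in\mathbb{R}^{d+1}$ is precisely the point the paper constructs in \eqref{eqn:centers} (after normalizing $y_i$ to have $(d+1)$-th component $1$), and your potential $f_x(\Pi_{I,B})=\scal{g_{I,B}}{(x,1)}$ is exactly the quantity whose telescoping sum the paper takes along a putative cycle before pairing with $(x,1)$. Showing that $f_x$ increases strictly along each oriented edge is dual to the paper's step of writing $g_{i+1}-g_i=\mu_i N_i$ with $\mu_i>0$ and then reaching $0=\sum_i\mu_i\scal{N_i}{(x,1)}$; the four-case sign check you defer is the same computation the paper carries out around \eqref{eqn:centerscal}--\eqref{eqn:tilebary}. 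So the regular case is the same route, and the potential-function phrasing is arguably a neater way to present it. Where you do diverge is the $d=1$ coda: the paper stays with the same telescoping idea and constructs a closed polygonal path by a winding-angle argument (each $N_i\angle N_{i+1}<\pi$ but the total turning is a nonzero multiple of $2\pi$), recovering a positive linear relation $\sum_i\mu_i N_i=0$ without needing a lift. Your proposal to route through the dual pseudoline arrangement is a different (and plausible) combinatorial strategy, but as you note it is the least developed part of the plan; the paper's angular argument is more direct because it reuses the identity \eqref{eqn:combzero} already set up for the regular case.
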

\begin{proof}
Let $\Pi_i\df\Pi_{I_i,B_i}$, $i=1,\ldots,r$ be a family of $r$ tiles of $\mathcal{P}(h)$ and let $F_i\df\Pi_{J_i,C_i}$, $i=1,\ldots, r$ be a family of facets such that $F_i$ is shared between the tiles $\Pi_i$ and $\Pi_{i+1}$. Let us assume that the tiles form a cycle in $\mathcal{G}$, i.e., $\Pi_{r+1}=\Pi_1$. For each $1\leq i\leq r$, let $N_i\df N_{C_i}$ be a vector normal to the $i$-th facet and pointing from the tile $\Pi_i$ to the tile $\Pi_{i+1}$.

Since $\mathcal{P}(h)$ is regular, by Theorem \ref{thm:constructiondel}, for each tile $\Pi_i$ there is a vector $y_i\in\mathbb{R}^{d+2}$ with $(y_i)_{d+1}>0$ such that $\scal{y_i}{(a_s,h(a_s),1)}$ is positive if $s\in I_i$, zero if $s\in B_i$ and negative if $s\in\overline{I_i\sqcup B_i}$ . Define the point $g_i\in\mathbb{R}^{d+1}$ component-wise as 
\begin{equation}
\label{eqn:centers}
    (g_i)_j\df\frac{(y_i)_j}{(y_i)_{d+1}},\,j=1,\ldots,d,\;(g_i)_{d+1}\df\frac{(y_i)_{d+2}}{(y_i)_{d+1}},
\end{equation}
which is possible since $(y_i)_{d+1}>0$. For all $b\in B_i$, $\scal{y_i}{(a_b,h(a_b),1)}=0$ implies
\begin{equation}
\label{eqn:centerscal}
\scal{g_i}{v_b}=-h(a_b),
\end{equation}
and as a consequence, for all $c\in B_i\cap B_{i+1}=C_i$,
\begin{equation}
\label{eqn:centerscal2}
    \scal{g_{i+1}-g_i}{v_c}=0,
\end{equation}
i.e., the vector $(g_{i+1}-g_i)$ is parallel to $N_i$. Let now $z_i\in\Pi_i$ be the point
\begin{equation}
\label{eqn:tilebary}
    z_i\df\sum_{j\in I_i}v_j+\frac{1}{2}\sum_{b\in B_i}v_b,
\end{equation}
and let $b\in B_i$, $b^\prime\in B_{i+1}$ be the two indices such that $B_i\setminus\{b\}=B_{i+1}\setminus\{b^\prime\}$. Let $\sigma_1=+1$ or $-1$ if $b\in I^\prime$ or $b\not\in I^\prime$, respectively, and similarly $\sigma_2=+1$ or $-1$ if $b^\prime\in I$ or $b^\prime\not\in I$ respectively. Using \eqref{eqn:detdel}, \eqref{eqn:centers} and \eqref{eqn:tilebary}, it is easy to check that $\sign(\scal{g_i}{v_{b^\prime}}+h(a_{b^\prime}))=\sigma_2$, $\sign(\scal{g_{i+1}}{v_b}+h(a_b))=\sigma_1$ and $z_{i+1}-z_i=\sigma_1v_b-\sigma_2 v_{b^\prime}$. Therefore, according to \eqref{eqn:centerscal} and \eqref{eqn:centerscal2},
\begin{align}
\sign(\scal{g_{i+1}\!-\!g_i}{z_{i+1}\!-\!z_i})\!&=\sign(\scal{g_{i+1}\!-\!g_i}{\sigma_1 v_b\!-\!\sigma_2 v_{b^\prime}})\\
\!&=\sign(\sigma_1\scal{g_{i+1}}{v_b}\!+\!\sigma_2 h(a_{b^\prime})\!+\!\sigma_1 h(a_b)\!+\!\sigma_2\scal{g_i}{v_{b^\prime}})\\
\!&=\sigma_1^2+\sigma_2^2>0.
\end{align}
In other words, $(g_{i+1}-g_i)$ always points in the same direction as $N_i$, and thus $g_{i+1}-g_i=\mu_i N_i$ for some $\mu_i>0$. We can therefore write:
\begin{equation}
\label{eqn:combzero}
    0=\sum_{i=1}^r (g_{i+1}-g_i)=\sum_{i=1}^r\mu_i N_i\mbox{ with }\mu_1,\ldots,\mu_r>0.
\end{equation}
Taking the scalar product of \eqref{eqn:combzero} with $(x,1)$, $x\in\mathbb{R}^d$ shows that, for at least one facet $F_i$, we must have $\scal{N_i}{(x,1)}<0$ and therefore
\begin{equation}
    \sign\left(\scal{N_i}{(x,1)}\right)\neq \sign\left(\scal{N_i}{z_{i+1}-z_i}\right),
\end{equation}
i.e., \eqref{eqn:orient} fails. In other words, this orientation cannot be induced by any generic point $x\in\mathbb{R}^d$. All orientations $(\mathcal{G},o_x)$ are therefore acyclic.

In the one-dimensional case, we can obtain the positive linear combination of normals \eqref{eqn:combzero} without assuming the existence of the vectors $y_i$. We only give a sketch of the proof. First, there is at least one tile $\Pi_i$ such that $F_i\neq F_{i+1}$, else the tiles cannot form a loop. Furthermore, since each tile is convex, each angle $N_i\angle N_{i+1}$ can only be strictly less than $\pi$, but the total angle along the cycle must be equal to $2k\pi$, $k\in\mathbb{Z}\setminus\{0\}$. These conditions imply that there is a closed path in $\mathbb{R}^2$ whose $j$-th displacement vector is directed along $N_j$. Defining $g_i$ as the $i$-th vertex of the path then yields \eqref{eqn:combzero}.

\end{proof}
\begin{remark}
The construction used in the proof of Lemma \ref{lem:acyclic} is similar to the affinization of central hyperplane arrangements, see e.g. \cite[Chapter~7]{beck2018combinatorial}.
\end{remark}

As a directed acyclic graph, $(\mathcal{G},o_x)$ can be topologically sorted, and the (only) tile $\Pi_{\varnothing,Z}$ supported on $x$ can be used as the root of an oriented path that follows the topological sorting. We prove now that the other tiles $\Pi_{I^\prime,B^\prime}$ supported on $x$ are all reachable from $\Pi_{\varnothing,Z}$ using such a path. First, we need a small lemma in convex theory, very similar (although not equivalent) to Carathéodory's theorem.
\begin{lemma}
\label{lem:sephyper}
Let $A=(a_1,\ldots,a_n)$ be a configuration of $n>d+1$ points in $\mathbb{R}^d$, and let $B\subset [n]$ be a set of $\norm{B}=d+1$ indices such that the points $(a_i)_{i\in B}$ are affinely independent. Then, for every $x\in\ch(A)$ there exists an index $b\in B$ such that $a_b$ and $x$ are on the same closed halfspace of $\aff(\{a_i\}_{i\in B\setminus\{b\}})$ and $x\in\ch(\{a_i\}_{i\in [n]\setminus \{b\}})$.
\end{lemma}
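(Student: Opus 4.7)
The plan is to use the barycentric coordinates of $x$ with respect to $B$ to identify the subset of indices satisfying the halfspace condition, and then extend a carefully chosen line through $x$ up to the boundary of $\ch(A)$ to locate one index among them that also satisfies the convex-hull condition.

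Let $(\beta_b)_{b\in B}$ be the barycentric coordinates of $x$ relative to $B$, so that $x=\sum_{b\in B}\beta_b a_b$ with $\sum_b\beta_b=1$, and set $K\df\{b\in B:\beta_b\geq 0\}$ and $J\df B\setminus K$. The halfspace condition on $a_b$ is precisely $b\in K$, and $K$ is nonempty since $\sum_b\beta_b=1>0$. If $J=\varnothing$, then $x\in\ch(B)$, and I would pick any $c\in[n]\setminus B$ (which exists because $n>d+1$) and extend the ray from $a_c$ through $x$; being bounded, it must exit $\ch(B)$ at a point $y$ lying in some face $\ch(\{a_i\}_{i\in B\setminus\{b\}})$. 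Writing $x$ as the resulting convex combination of $a_c$ and $y$ yields $x\in\ch(\{a_c\}\cup(B\setminus\{b\}))\subseteq\ch(A\setminus\{a_b\})$, with $b\in K=B$.

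For the main case $J\neq\varnothing$, let $\lambda\df\sum_{k\in K}\beta_k>1$ and introduce the convex combinations $p_K\df\lambda^{-1}\sum_{k\in K}\beta_k a_k\in\ch(\{a_k\}_{k\in K})$ and $q_J\df(\lambda-1)^{-1}\sum_{j\in J}(-\beta_j)a_j\in\ch(\{a_j\}_{j\in J})$, which jointly satisfy $x=\lambda p_K+(1-\lambda)q_J$. If $p_K=q_J$, then $x=p_K\in\ch(\{a_j\}_{j\in J})$, so any $k\in K$ fulfills both conditions. Otherwise, let $v\df p_K-q_J\neq 0$ and consider the line $\ell(t)\df t p_K+(1-t)q_J$, which passes through $x$ at $t=\lambda$. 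Since $\ch(A)$ is compact, there is a largest $T\geq\lambda>1$ with $\ell(T)\in\ch(A)$; by the standard description of faces of convex hulls as intersections with supporting hyperplanes, $\ell(T)$ then lies in the face $\ch(\{a_i\}_{i\in S})$ of $\ch(A)$, where $S\df\{i\in[n]:\scal{v}{a_i}=M\}$ and $M\df\max_i\scal{v}{a_i}$.

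The crux of the argument is to show that $K^+\df\{k\in K:\beta_k>0\}$ is not contained in $S$. Indeed, $K^+\subseteq S$ would imply $\scal{v}{p_K}=M$; combined with the elementary identities $\scal{v}{\ell(T)}=\scal{v}{q_J}+T\scal{v}{v}=M$ and $\scal{v}{p_K}=\scal{v}{q_J}+\scal{v}{v}$, this would force $T=1$, contradicting $T>1$. Picking any $k\in K^+\setminus S$ and rewriting $x=(1-\lambda/T)q_J+(\lambda/T)\ell(T)$ as a convex combination of points in $\ch(\{a_j\}_{j\in J})$ and $\ch(\{a_i\}_{i\in S})$ yields $x\in\ch(\{a_i\}_{i\in J\cup S})\subseteq\ch(A\setminus\{a_k\})$, while $\beta_k>0$ ensures the halfspace condition. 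The main obstacle I expect is this precise control of the face of $\ch(A)$ on which $\ell(T)$ lands, together with the sign analysis that forces one of the $a_k$ with $\beta_k>0$ to fail to be a maximizer of $\scal{v}{\cdot}$ once $T>1$.
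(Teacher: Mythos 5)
The overall geometric strategy—extend a ray through $x$ to the boundary of $\ch(A)$ and drop whichever vertex of $B$ on the positive side fails to be needed in a convex representation of the exit point—is sound and genuinely different from the paper's proof, which performs an algebraic pivot on barycentric coordinates (writing $x=\sum_i\mu_i a_i$ with $\mu_i\geq 0$ and $x=\sum_{b\in B}\lambda_b a_b$, then shifting by the minimal ratio $\alpha_c=\mu_c/(\lambda_c-\mu_c)$ to kill the coefficient of $a_c$). However, your argument has a concrete gap at the step where you assert that $\ell(T)$ lies in the face $\ch(\{a_i\}_{i\in S})$ with $S=\{i:\scal{v}{a_i}=M\}$ and $M=\max_i\scal{v}{a_i}$. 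That face is the one on which the linear functional $\scal{v}{\cdot}$ attains its maximum, whereas the exit point $\ell(T)$ of the ray lies on whatever boundary facet of $\ch(A)$ it happens to hit, and the supporting hyperplane there is \emph{not} normal to $v$ in general. The two faces coincide only in very special cases, so the claimed decomposition $x\in\ch(\{a_i\}_{i\in J\cup S})$ does not follow from $\ell(T)\in\ch(A)$.

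The repair is to work with the actual supporting hyperplane at $\ell(T)$. Since $T$ is maximal, there is a facet of $\ch(A)$ through $\ell(T)$ whose outward normal $N$ satisfies $\scal{N}{v}>0$; set $S^\prime\df\{i:\scal{N}{a_i}=\max_j\scal{N}{a_j}\}$, so that $\ell(T)\in\ch(\{a_i\}_{i\in S^\prime})$ by the standard characterization of faces of $\ch(A)$. The argument that $K^+\not\subseteq S^\prime$ then runs exactly as you had it, but with $N$ in place of $v$: if $K^+\subseteq S^\prime$ then $\scal{N}{p_K}=\scal{N}{\ell(T)}$, while $\ell(T)-p_K=(T-1)v$ with $T>1$ and $\scal{N}{v}>0$ forces $\scal{N}{\ell(T)}>\scal{N}{p_K}$, a contradiction. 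With $S^\prime$ in place of $S$, the rest of your proof goes through. It is worth noting that your $K^+\not\subseteq S$ deduction for the original $S$ was itself correct (it only uses $\ell(T)\in\ch(A)\Rightarrow\scal{v}{\ell(T)}\leq M$), but it is wasted because the convex-hull membership $\ell(T)\in\ch(\{a_i\}_{i\in S})$ that you pair it with is false.

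You should also check the two boundary cases in the $J=\varnothing$ branch: the ray from $a_c$ through $x$ is undefined when $a_c=x$ (then $x=a_c\in\ch(\{a_i\}_{i\in[n]\setminus\{b\}})$ trivially for any $b\in B$), and the exit point $y$ of that ray may coincide with $x$ itself when $x$ already lies on a facet of $\ch(\{a_b\}_{b\in B})$; both cases are harmless but deserve a word. These are minor compared to the face-identification gap above.
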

\begin{proof}
First, assume that $x\in\ch(\{a_i\}_{i\in B})$. In this case, for all $b\in B$, $x$ is on the same closed halfspace of $\aff(\{a_i\}_{i\in B\setminus\{b\}})$ as $a_b$. We can then pick any index $c\in [n] \setminus B$, and the (possibly degenerate) simplices $\ch(\{a_i\}_{i\in B\setminus\{b\}\sqcup\{c\}})$ for all $b\in B$ cover $\ch(\{a_i\}_{i\in B})$. Thus, for at least one index $b\in B$, $x\in\ch(\{a_i\}_{i\in B\setminus\{b\}\sqcup\{c\}})$, satisfying the lemma.

Assume now that $x\not\in\ch(\{a_i\}_{i\in B})$. Then $x\in\ch(A)$ if and only if
\begin{equation}
 \label{eqn:positivecomb}
 x=\sum_{i=1}^n\mu_i a_i
 \end{equation}
for some real numbers $\mu_a=i$ satisfying $\mu_i\geq 0$ and $\sum_{i=1}^n\mu_i=1$. Since the points indexed by $B$ are affinely independent, we can also express $x=\sum_{b\in B}\lambda_b a_b$, with $\sum_{b\in B}\lambda_b=1$. We extend this to a linear combination $x=\sum_{i=1}^n\lambda_i a_i$ by defining $\lambda_i\df 0$ for $i\not\in B$.
We have
\begin{equation}
    \sum_{i=1}^n\mu_i=1=\sum_{i=1}^n\lambda_i,
\end{equation}
and therefore $\sum_{i=1}^n(\mu_i-\lambda_i)=0$. The expression $\mu_i-\lambda_i$ cannot be identically zero for all $i\in [n]$, since otherwise $x\in\ch(\{a_j\}_{j\in B})$, which we have excluded. Thus, there must be at least one $b\in B$ with $\lambda_b>\mu_b\geq 0$. If we pick an index $c\in B$ such that \begin{equation}
    c\in\argmin_{b\in B}\left\{\alpha_b\df\frac{\mu_b}{\lambda_b-\mu_b}:\lambda_b>\mu_b\right\},
\end{equation}
we can write the nonnegative linear combination
\begin{equation}
\label{eqn:positivecomb3}
 \sum_{i=1}^n\left[\mu_i-(\lambda_i-\mu_i)\alpha_c\right]a_i=x,
\end{equation}
where clearly $\mu_i-(\lambda_i-\mu_i)\alpha_c\geq 0$ and $\mu_c-(\lambda_c-\mu_c)\alpha_c=0$. Thus, the point $a_c$ satisfies the lemma, since $\lambda_c>\mu_c\geq 0$ implies that $a_c$ and $x$ are on the same open halfspace of $\aff(\{a_i\}_{i\in B\setminus\{c\}})$, and $x$ can be expressed as the convex combination \eqref{eqn:positivecomb3} with the point $a_c$ having a zero coefficient.
\end{proof}

We can now prove that there is always a directed path in $(\mathcal{G},o_x)$ from $\Pi_{\varnothing,Z}$ to any tile $\Pi_{I^\prime,B^\prime}$ supported on $x$.
\begin{proposition}
\label{prop:support}
Let $\mathcal{P}(h)$ be a regular fine zonotopal tiling of $Z(V)$ with generic height function $h$, let $x\in\ch(A)$ be a generic point, and let $\Pi_{\varnothing,Z}$ be the only tile in $\mathcal{P}^{(0)}(h)$ supported on $x$. Then for every tile $\Pi_{I^\prime,B^\prime}\in\mathcal{P}(h)$ supported on $x$, there is a directed path in $(\mathcal{G},o_x)$ from $\Pi_{\varnothing,Z}$ to $\Pi_{I^\prime,B^\prime}$ with every tile $\Pi_{I,B}$ in the path satisfying $\norm{I}\leq\norm{I^\prime}$.
\end{proposition}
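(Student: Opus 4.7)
The plan is strong induction on $k\df\norm{I'}$. The base case $k=0$ is immediate: by Proposition \ref{prop:triangulation} the simplices in $\mathcal{T}^{(0)}$ triangulate $\ch(A)$, so the generic point $x$ lies in the interior of a unique such simplex and the tile $\Pi_{I',B'}$ supported on $x$ must coincide with $\Pi_{\varnothing,Z}$.

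For the inductive step I construct an immediate predecessor of $\Pi_{I',B'}$ in $(\mathcal{G},o_x)$. Applying Lemma \ref{lem:sephyper} to the configuration $(a_i)_{i\in I'\sqcup B'}$ with the affinely independent subset $B'$ produces an index $b'\in B'$ such that (a) $a_{b'}$ and $x$ lie in the same closed halfspace of $H\df\aff(\{a_c\}_{c\in C})$ with $C\df B'\setminus\{b'\}$, and (b) $x\in\ch(\{a_i\}_{i\in I'\sqcup C})$. The candidate predecessor sits across the facet $F\df\Pi_{I',C}$ of $\Pi_{I',B'}$ (the face of \eqref{eqn:zonotile} obtained by setting $\alpha_{b'}=0$). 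I first verify that $F$ is internal: if $F$ were a boundary facet, case (ii) of Proposition \ref{prop:faces} applied with $b'\notin J=I'$ would place $\{a_i\}_{i\in I'}$ in the closed positive halfspace of $H$ and force $a_{b'}$ together with $\{a_i\}_{i\in\overline{I'\sqcup B'}}$ into the closed negative halfspace, which combined with (a) and (b) pins $x\in H$, contradicting the genericity of $x$ (extended, at no cost, to the zero-measure union of affine hulls of all facets of $\mathcal{P}(h)$, internal or not).

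Being internal, $F$ is shared by $\Pi_{I',B'}$ with a unique tile $\Pi_{I,B}$. Case (i) of Proposition \ref{prop:faces} then gives $B=C\sqcup\{b\}$ with either $I=I'$ (when $b\notin I'$, hence $\norm{I}=\norm{I'}$) or $I=I'\setminus\{b\}$ (when $b\in I'$, hence $\norm{I}=\norm{I'}-1$); in both subcases $\norm{I}\leq\norm{I'}$. Moreover $I\sqcup B\supseteq I'\sqcup C$ in both subcases, so property (b) immediately yields $x\in\ch(\{a_i\}_{i\in I\sqcup B})$, i.e.\ $\Pi_{I,B}$ is also supported on $x$. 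For the orientation, the sign relations $\sign(\scal{N_C}{v_b})=\pm\sign(\scal{N_C}{v_{b'}})$ extracted from the proof of Proposition \ref{prop:faces} (opposite signs when $\norm{I}=\norm{I'}$, equal signs otherwise), together with the parametrization of interior points $z\in\Pi_{I,B}$ and $z'\in\Pi_{I',B'}$ obtained by perturbing a common base point on $F$ along $\pm v_b$ and $\pm v_{b'}$, combine to give $\sign(\scal{N_C}{z'-z})=\sign(\scal{N_C}{v_{b'}})$ in both subcases; by (a) this sign equals $\sign(\scal{N_C}{(x,1)})$, so the edge of $\mathcal{G}$ is oriented from $\Pi_{I,B}$ to $\Pi_{I',B'}$ under $o_x$. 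Applying the inductive hypothesis to $\Pi_{I,B}$ and appending this edge produces the required path. I anticipate that the main obstacle is precisely this orientation check, which demands careful sign bookkeeping across the two subcases of case (i) of Proposition \ref{prop:faces}; the boundary-versus-internal dichotomy for $F$ is secondary and is dispatched by the genericity of $x$.
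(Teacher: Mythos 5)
Your predecessor construction is correct and is essentially the paper's own argument: the same appeal to Lemma \ref{lem:sephyper} to pick $b'$, the same exclusion of the boundary-facet alternative of Proposition \ref{prop:faces}, the same verification that $I\sqcup B\supseteq I'\sqcup B'\setminus\{b'\}$ (hence $\Pi_{I,B}$ is supported on $x$ and $I\subseteq I'$), and the same sign bookkeeping for the orientation, which does check out in both subcases. The harmless extension of the genericity condition \eqref{eqn:xgeneral} to boundary facets is a reasonable way to dispatch the boundary case (the paper instead exhibits a point $a_j$, $j\in I'$, strictly on the same side of $H_{b'}$ as $a_{b'}$, which contradicts alternative \ref{item:faces2}; both routes implicitly use $x\notin H_{b'}$).

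However, there is a genuine gap in how you close the argument: you frame the proof as strong induction on $k=\norm{I'}$ and end by ``applying the inductive hypothesis to $\Pi_{I,B}$'', but in the subcase $I=I'$ the predecessor has $\norm{I}=\norm{I'}=k$, so the inductive hypothesis does not apply to it and the recursion, as stated, could in principle cycle forever among order-$k$ tiles. What is needed is a termination argument for the backward chain $\Pi_{I',B'}\leftarrow\Pi_{I,B}\leftarrow\cdots$: every edge you construct is a directed edge of $(\mathcal{G},o_x)$ pointing forward along the chain, so by Lemma \ref{lem:acyclic} (acyclicity of $(\mathcal{G},o_x)$ for regular tilings and generic $x$) and finiteness of $\mathcal{P}(h)$, the chain must terminate, and it can only terminate at a tile with $I=\varnothing$, i.e.\ at $\Pi_{\varnothing,Z}$, since your construction produces a predecessor for every supported tile with $I\neq\varnothing$. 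This is precisely the role Lemma \ref{lem:acyclic} plays in the paper's proof, and it is the one ingredient your write-up omits; note that regularity of the tiling enters the proposition exactly through this lemma, so it cannot be skipped.
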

\begin{proof}
If $I^\prime=\varnothing$, then necessarily $\Pi_{I^\prime,B^\prime}=\Pi_{\varnothing,Z}$, and we are done. Else, we complete the proof by finding another tile $\Pi_{I,B}$ and an oriented edge $\Pi_{I,B}\rightarrow\Pi_{I^\prime,B^\prime}$ in $(\mathcal{G},o_x)$ such that $\Pi_{I,B}$ is supported on $x$ and $I\subseteq I^\prime$. The same reasoning can then be applied to $\Pi_{I,B}$ and again repeatedly, yielding an oriented path of tiles supported on $x$ and with non-increasing $\norm{I}$. Since the graph is acyclic (Lemma \ref{lem:acyclic}) and the number of tiles is finite, the process must eventually end with $\Pi_{I,B}=\Pi_{\varnothing,Z}$ as the root of the path.

According to Lemma \ref{lem:sephyper}, and since $x$ is generic, there exists an index $b^\prime\in B^\prime$ such that
\begin{equation}
\label{eqn:ch}
    x\in\ch(\{a_i\}_{i\in I^\prime\sqcup B^\prime\setminus\{b^\prime\}})\mbox{ and }a_{b^\prime},\,x\mbox{ are on the same side of }H_{b^\prime},
\end{equation}
where $H_{b^\prime}\df\aff(\{a_i\}_{i\in B^\prime\setminus\{b^\prime\}})$. Necessarily, this means that there is an index $j\in I^\prime$ such that $a_j$ is on the same side of $H_{b^\prime}$ as $a_{b^\prime}$, otherwise $H_{b^\prime}$ would separate $x$ from the convex hull $\ch(\{a_i\}_{i\in I^\prime\sqcup B^\prime\setminus\{b^\prime\}})$ and \eqref{eqn:ch} would be false. Proposition \ref{prop:faces} then guarantees that there is a tile $\Pi_{I,B}$, connected to $\Pi_{I^\prime,B^\prime}$ with an edge in $\mathcal{G}$, such that $B\setminus\{b\}=B^\prime\setminus\{b^\prime\}$ for some $b\in B$ and either $I^\prime=I$ or $I^\prime=I\sqcup\{b\}$. The point $a_b$ is on the opposite side of $H_{b^\prime}$ as $a_{b^\prime}$ and $x$ in the first case, and on the same side in the second case. It is easy to check, using \eqref{eqn:orient} and taking the representative points $z\in\Pi_{I,B}$ and $z^\prime\in\Pi_{I^\prime,B^\prime}$ defined as in \eqref{eqn:tilebary}, that in both cases the edge associated to the tile $\Pi_{J,C}$ with $J=I^\prime$, $C=B\cap B^\prime$ is oriented from $\Pi_{I,B}$ to $\Pi_{I^\prime,B^\prime}$. Furthermore, in both cases, $B\sqcup I\supseteq I^\prime\sqcup B^\prime\setminus\{b^\prime\}$, implying that $\Pi_{I,B}$ is supported on $x$, and $I\subseteq I^\prime$. This completes the proof.
\end{proof}

Proposition \ref{prop:support} is important because it shows that every tile $\Pi_{I,B}$ of order $k$ can be connected to $\Pi_{\varnothing,Z}$ in $(\mathcal{G},o_x)$ using only tiles of order $k$ or less (see e.g. Figure \ref{fig:cocircgraphor}). In practical applications, this implies that all the spline functions of degree $k$ supported on any given point can be found efficiently using only the knowledge of spline functions of degree $r\leq k$. Therefore, when constructing a spline space using the process delineated in Theorems~\ref{thm:constructiongen} and  \ref{thm:constructiondel}, the iterations can be safely stopped at the desired degree, without any need to access higher-degree functions.

Furthermore, Theorem \ref{prop:support} suggests a simple and efficient algorithm to find all the spline functions supported on a point $x$. The first step, which requires finding the spline of degree $k=0$ having $x$ in its support, can be efficiently implemented via any search tree constructed on the simplices in $\mathcal{T}^{(0)}$ \cite{guttman1984r,beckmann1990r}. Such trees typically have a $O\left(n\log(n)\right)$ construction complexity and a $O\left(\log(n)\right)$ query complexity, $n$ being the number of degree-zero splines. After this first step, the complexity is simply linear in the number of spline functions (of all degrees $r\leq k$) which are nonzero on $x$, and does not depend on the total number of functions in the spline space.

Notice however that there is still a need to check explicitly if every visited spline function is actually supported on $x$, albeit only for a limited number of functions.

We show an example of the directed graph $(\mathcal{G},o_x)$ in Figure \ref{fig:cocircgraphor}.

\begin{figure}[htbp]
    \centering
    \subfloat{\includegraphics[width=0.40\linewidth]{./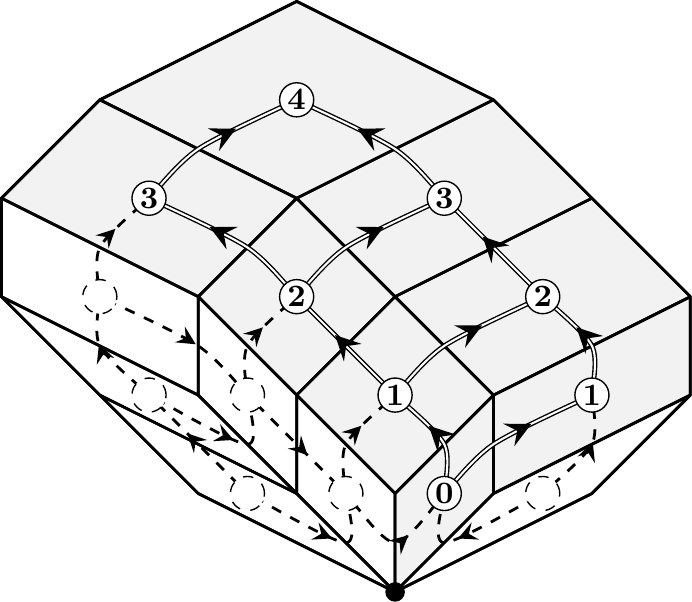}}\hspace{0.05\textwidth}%
    \subfloat{\includegraphics[width=0.40\linewidth]{./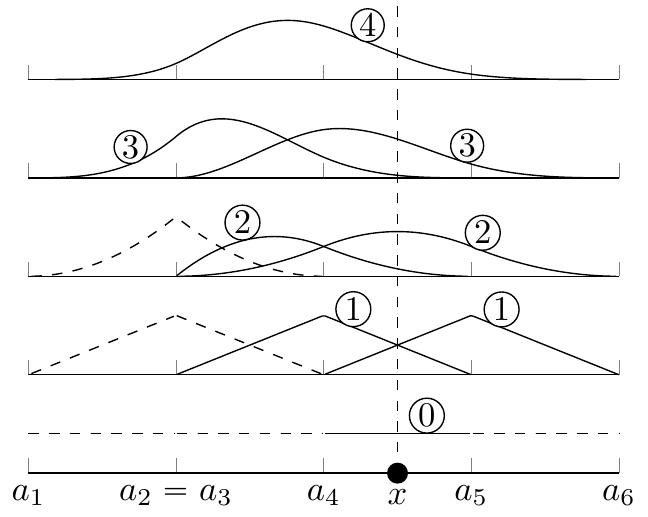}}\newline
    \subfloat{\includegraphics[width=0.40\linewidth]{./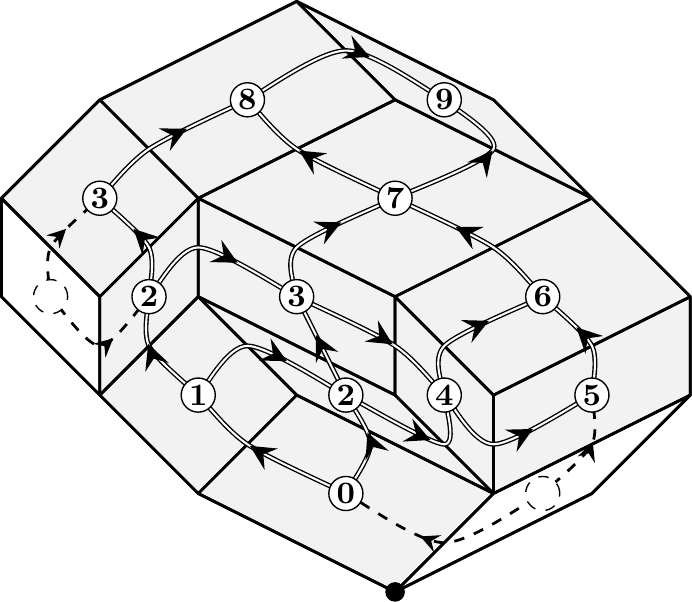}}\hspace{0.05\textwidth}l%
    \subfloat{\includegraphics[width=0.40\linewidth]{./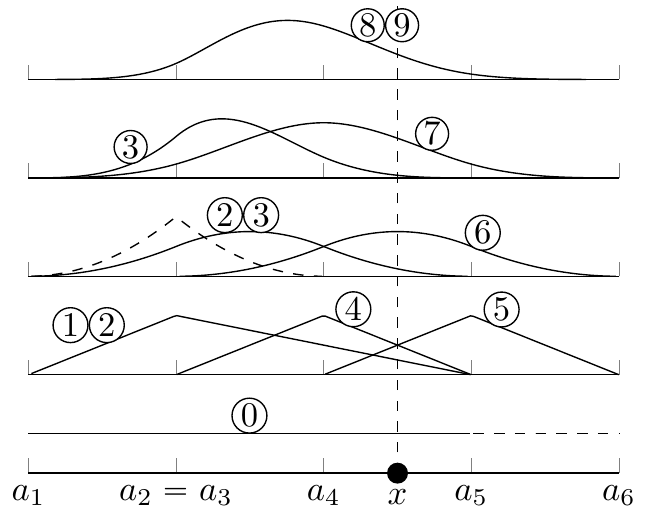}}
    \caption{{\em Left}: oriented adjacency graph $(\mathcal{G},o_x)$ for the tilings of Figure \ref{fig:zonobase}, with the orientation induced by a point $x\in(a_4,a_5)$. The subgraph determined by the tiles supported on $x$ is drawn with solid lines, and the tiles are numbered according to their position in a topological sorting of $(\mathcal{G},o_x)$, starting with $0$ for the tile $\Pi_{\varnothing,Z}$. {\em Right}: corresponding spline functions supported on $x$.}
    \label{fig:cocircgraphor}
\end{figure}

\subsection{Spline evaluation}
\label{sec:del:eval}
Once all the spline functions supported on a given point $x$ have been determined, one might be tempted to use the oriented graph $(\mathcal{G},o_x)$ and its topological sorting to compute the value of all the spline functions on $x$. 

Imagine that we want to compute, for some tile $\Pi_{I,B}$ supported on $x$, the value of $\overline{M}_b \df\M{x}{(a_i)_{i\in I\sqcup B\setminus\{b\}}}$ for all $b\in B$, which  can  in turn be used to compute the value of the spline itself $\overline{M}\df\M{x}{\Pi_{I,B}}$ using \eqref{eqn:splinerec-2}. For every $b\in B$ and every point $x\in\mathbb{R}^d$, if $\overline{M}_b(x)\neq 0$, then there is exactly one edge $\Pi_{I^\prime,B^\prime}\rightarrow\Pi_{I,B}$ with $B\setminus\{b\}=B^\prime\setminus\{b^\prime\}$ and either $I=I^\prime$, $I=I^\prime\sqcup\{b^\prime\}$,  $I^\prime=I\sqcup\{b\}$ or $I\sqcup\{b\}=I^\prime\sqcup\{b^\prime\}$. Suppose that the values of $\M{x}{(a_i)_{i\in I^\prime\sqcup B^\prime}}$ and $\M{x}{(a_i)_{i\in I^\prime\sqcup B^\prime\setminus\{b^\prime\}}}$ for all $b^\prime\in B^\prime$ are known. Are we able to compute the value of $\overline{M}_b$? The answer depends on which case is realized. In particular:
\begin{enumerate}[label=(\roman*)]
    \item If $I=I^\prime$, then $\overline{M}_b=\M{x}{(a_i)_{i\in I^\prime\sqcup B^\prime\setminus\{b^\prime\}}}$, which is known;
    \item if $I=I^\prime\sqcup\{b^\prime\}$, then $\overline{M}_b=\M{x}{\Pi_{I^\prime,B^\prime}}$, which is also known;
    \item if $I\sqcup\{b\}=I^\prime\sqcup\{b^\prime\}$, then $\overline{M}_b$ can be computed from the set of known values $\M{x}{(a_i)_{i\in I^\prime\sqcup B^\prime\setminus\{b^\prime\}}}$, $b^\prime\in B^\prime$ via a single application of \eqref{eqn:splinepar}.
\end{enumerate}
However, in the case $I^\prime=I\sqcup\{b\}$, there seems to be no obvious way to directly obtain $\overline{M}_b$. If this happens only for a single $b\in B$, then it is still possible to obtain $\overline{M}_b$ via \eqref{eqn:splinerec-2}, after noticing that $\overline{M}=\M{x}{(a_i)_{i\in I^\prime\sqcup B^\prime\setminus\{b^\prime\}}}$. In general, however, this case can happen more than once for a given point $x$ and a given spline $\M{x}{\Pi_{I,B}}$ if $d\geq 3$, making it essentially impossible to build an efficient recurrent evaluation scheme without the use of some auxiliary functions.

We propose here a slightly different construction, based on the following observation. First, notice that the problematic case $I^\prime=I\sqcup\{b\}$ cannot arise if $\M{x}{\Pi_{I,B}}$ is a spline of maximal degree for $\mathcal{P}$ (see Figure \ref{fig:zonobase}). However, if we consider a zonotopal tiling $\mathcal{P}_{I,B}$ of the zonotope $Z(V_{I,B})$ built on the reduced point configuration $A_{I,B}\df(a_i)_{i\in I\sqcup B}$, then $\M{x}{\Pi_{I,B}}$ can indeed be obtained from any maximal-degree tile of $\mathcal{P}_{I,B}$. Thus, if in the evaluation of each spline $\M{x}{\Pi_{I,B}}$ we use the reduced tiling $\mathcal{P}_{I,B}$, the problematic case $I^\prime=I\sqcup\{b\}$ cannot occur, and neither can the case  $I^\prime\sqcup\{b^\prime\}=I$. Notice that an induced tiling $\mathcal{P}_{I,B}$ of $Z(V_{I,B})$ can simply be obtained from $\mathcal{P}$ via Lemma \ref{lem:inducedtil}.

The reasoning of the previous paragraph suggests a simple procedure to build a set of auxiliary spline functions that are sufficient to compute, via recurrence, the value of any function $\M{x}{\Pi_{I,B}}$:
\begin{enumerate}[label=(\roman*)]
    \item Build the tiling $\mathcal{P}_{I,B}$ induced by $\mathcal{P}$ on the reduced point configuration $A_{I,B}\df(a_i)_{i\in I\sqcup B}$ via Lemma \ref{lem:inducedtil};\label{p:start}
    \item For each $b\in B$, find the unique tile $\Pi_{I^\prime,B^\prime}\in\mathcal{P}_{I,B}$, if any, such that $B\cap B^\prime=B\setminus\{b\}$. If the tile exists, the value of $\M{x}{(a_i)_{i\in I\sqcup B\setminus\{b\}}}$ can then be computed from the values of $\M{x}{\Pi_{I^\prime,B^\prime}}$ and $\M{x}{(a_i)_{i\in I^\prime\sqcup B^\prime\setminus\{b^\prime\}}}$, $b^\prime\in B^\prime$, either directly or through \eqref{eqn:splinepar}, otherwise the value is zero;\label{p:neig}
    \item Store the subsets $(I^\prime,B^\prime)$ found in step \ref{p:neig}, and repeat the same process from step \ref{p:start} starting from each corresponding tile $\Pi_{I^\prime,B^\prime}$. 
\end{enumerate}

The set of stored subsets $(I^\prime,B^\prime)$ obtained during this process corresponds to a set of auxiliary spline functions that are sufficient to compute the value of the spline $\M{x}{\Pi_{I,B}}$ for all $x$. Applying this process to all tiles $\Pi_{I,B}\in\mathcal{P}^{(k)}$ then yields a complete set of auxiliary functions sufficient for the evaluation of all the basis functions of order $k$ via \eqref{eqn:splinerec-2} and \eqref{eqn:splinepar}. Notice that the same couple $(I^\prime,B^\prime)$ can be obtained starting from multiple basis functions, in which case, it should obviously be stored only once.

So far, we have not detailed how the subsets corresponding to the tiles connected to $\Pi_{I,B}$ in the induced tiling $\mathcal{P}_{I,B}$ can be found efficiently in step \ref{p:neig}. Naively, one can start from the knowledge of the whole tiling $\mathcal{P}$ and apply Lemma \ref{lem:inducedtil}, but this is obviously computationally infeasible in most applications. Thankfully, in the case of regular tilings, there is a more efficient way to compute them.
\begin{lemma}
\label{lem:delwigo}
Let $\mathcal{P}(h)$ be a regular fine zonotopal tiling of $Z(V)$ with height function $h$, and let $\Pi_{I,B}$ and $\Pi_{I^\prime,B^\prime}$ be two of its tiles, sharing a facet $\Pi_{J,C}$ with normal vector $N_C$. Define for convenience:
\begin{empheq}{alignat=1}
\begin{aligned}
    \sigma_{ij}&\df\sign\left(\det((a_c,h(a_c),1)_{c\in C},(a_i,h(a_i),1),(a_j,h(a_j),1))\right),\\
    \sigma_i&\df\sign\left(\det((a_c,1)_{c\in C},(a_i,1))\right).
\end{aligned}
\end{empheq}
Then $b^\prime\in I$ if and only if $\sigma_{bb^\prime}\cdot \sigma_b>0$, $b\in I^\prime$ if and only if $\sigma_{bb^\prime}\cdot \sigma_{b^\prime}<0$, and, choosing the orientation of $N_C$ such that $\scal{N_C}{(x,1)}=\det((a_c,1)_{c\in C},(x,1))$, $\sign(\scal{N_C}{z-z^\prime})=\sigma_{bb^\prime}\cdot \sigma_b\cdot \sigma_{b^\prime}$ for all $z\in\Pi_{I,B}$, $z^\prime\in\Pi_{I^\prime,B^\prime}$.
\end{lemma}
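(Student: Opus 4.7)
The plan is to derive all three claims from the weighted Delaunay characterization of regular tiles established in the proof of Theorem~\ref{thm:constructiondel}: for any tile $\Pi_{I,B} \in \mathcal{P}(h)$, once the rows of $B$ are ordered so that $D_B \df \det((a_b,1)_{b\in B}) > 0$, the lifted determinant $\det((a_b, h(a_b), 1)_{b \in B}, (a_i, h(a_i), 1))$ is strictly positive for $i \in I$ and strictly negative for $i \in \overline{I \sqcup B}$ (nonzero by genericity of $h$). The three assertions of the lemma are then three reformulations of this single fact applied across the shared facet $\Pi_{J,C}$, with the signs $\sigma_{bb'}$, $\sigma_b$, $\sigma_{b'}$ playing the role of bookkeeping constants absorbing the various row orderings.

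Concretely, I fix an arbitrary ordering $(c_1, \ldots, c_d)$ of $C$ and, for Claim~1, apply the Delaunay test to $\Pi_{I,B}$ with $i = b'$. In the reference row order $(c_1, \ldots, c_d, b, b')$ the lifted determinant has sign $\sigma_{bb'}$ by definition, while passing to the positive-$D_B$ convention permutes only the first $d+1$ rows and multiplies the determinant by $\sigma_b$ (since in that reference order the unlifted $\det((a_c, 1)_{c\in C}, (a_b,1))$ has sign $\sigma_b$). The Delaunay positivity test therefore reads $\sigma_b \sigma_{bb'} > 0 \iff b' \in I$. Claim~2 follows symmetrically by applying the Delaunay test to $\Pi_{I',B'}$ with $i = b$: swapping the last two rows gives $\sigma_{b'b} = -\sigma_{bb'}$, and the positive-$D_{B'}$ convention introduces an additional $\sigma_{b'}$, so $-\sigma_{b'}\sigma_{bb'} > 0 \iff b \in I'$, which rearranges to the stated criterion.

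For Claim~3 I pick representative interior points $z \df \sum_{i \in I} v_i + \tfrac{1}{2} \sum_{b_1 \in B} v_{b_1}$ and $z' \df \sum_{i' \in I'} v_{i'} + \tfrac{1}{2} \sum_{b_1' \in B'} v_{b_1'}$, as in the proof of Proposition~\ref{prop:faces}; since $\scal{N_C}{z-z'}$ has a sign independent of the choices in the interiors, one such pair suffices. Using $\scal{N_C}{v_c} = 0$ for $c \in C$ together with $I \cap C = I' \cap C = \varnothing$ (from $I \cap B = I' \cap B' = \varnothing$), the difference collapses to $\scal{N_C}{\sum_{i \in I \setminus I'} v_i - \sum_{i' \in I' \setminus I} v_{i'}} + \tfrac{1}{2}(\sigma_b M_b - \sigma_{b'} M_{b'})$, where $M_b, M_{b'} > 0$ are the magnitudes of $\scal{N_C}{v_b}$ and $\scal{N_C}{v_{b'}}$ by the stated orientation of $N_C$. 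The four alternatives of Proposition~\ref{prop:faces} control the symmetric difference $I \triangle I'$: the ``same-order'' cases $I = I'$ and $I \sqcup \{b\} = I' \sqcup \{b'\}$ force $\sigma_b = -\sigma_{b'}$, while $I = I' \sqcup \{b'\}$ and $I' = I \sqcup \{b\}$ force $\sigma_b = \sigma_{b'}$. Direct substitution in each of the four cases yields a sign of $\pm\sigma_b$, and evaluating $\sigma_{bb'}\sigma_b\sigma_{b'}$ via Claims~1--2 in each of these cases produces the same sign, completing the verification.

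The whole argument is essentially careful bookkeeping: the main potential source of error is the tracking of row-ordering signs in Claims~1--2, while the four-case verification for Claim~3 is mechanical once the reduction to interior representatives is in place. No additional geometric input beyond Theorem~\ref{thm:constructiondel} and Proposition~\ref{prop:faces} is required.
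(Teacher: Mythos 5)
Your proposal is correct and takes essentially the same route as the paper's proof: both read off Claims~1--2 directly from the weighted Delaunay lifting property established in Theorem~\ref{thm:constructiondel} (tracking the row-reordering signs), and both evaluate Claim~3 on the representative barycentric points of \eqref{eqn:tilebary}. The only difference is stylistic: the paper first uses Claims~1--2 to absorb the four alternatives of Proposition~\ref{prop:faces} into a single algebraic expression $z - z' = \tfrac{1}{2}\left(\sigma_{bb'}\sigma_{b'}\,v_b + \sigma_{bb'}\sigma_b\,v_{b'}\right)$ and then argues the two scalar-product terms share a sign, whereas you verify the sign identity by direct substitution in each of the four cases.
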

\begin{proof}
The first two facts follow immediately from the Delaunay property \eqref{eqn:delcond}, since, if $\sigma_b>0$, then $b^\prime\in I$ if and only if $\sigma_{bb^\prime}>0$, and the same is true if both signs are reversed. The same reasoning applies to the condition $b\in I^\prime$ using $\sigma_{b^\prime b}=-\sigma_{bb^\prime}$ and $\sigma_{b^\prime}$. If we now consider the representative points $z\in\Pi_{I,B}$ and $z^\prime\in\Pi_{I^\prime,B^\prime}$ defined as in \eqref{eqn:tilebary}, we can express their difference as
\begin{equation}
    z-z^\prime=\frac{1}{2}\left(\sigma_{bb^\prime}\sigma_{b^\prime}v_b+\sigma_{bb^\prime}\sigma_b v_{b^\prime}\right),
\end{equation}
and therefore 
\begin{equation}
\label{eqn:signinterm}
    \sign(\scal{N_C}{z-z^\prime})=\frac{1}{2}\sign\left(\sigma_{bb^\prime}\sigma_{b^\prime}\scal{N_C}{(a_b,1)}+\sigma_{bb^\prime}\sigma_b\scal{N_C}{(a_{b^\prime},1)}\right)
\end{equation}
but since $b$ and $b^\prime$ are on the same side of $\aff(\{a_c\}_{c\in C})$ if and only if $0<\sigma_b\cdot\sigma_{b^\prime}=(\sigma_{bb^\prime}\sigma_b)\cdot(\sigma_{bb^\prime}\sigma_{b^\prime})$, the two terms in the sum  on the right hand side of \eqref{eqn:signinterm} always have the same sign, and we can thus rewrite \eqref{eqn:signinterm} as
\begin{equation}
    \frac{1}{2}\left(\sigma_{bb^\prime}\sigma_{b^\prime}\sign(\scal{N_C}{(a_b,1)})+\sigma_{bb^\prime}\sigma_b\sign(\scal{N_C}{(a_{b^\prime},1)})\right)=\sigma_{bb^\prime}\cdot\sigma_b\cdot\sigma_{b^\prime},
\end{equation}
since $\sign(\scal{N_C}{(a_b,1)})=\sigma_b$, and similarly for $b^\prime$. This completes the proof.
\end{proof}

In the case of regular tilings, Lemma \ref{lem:delwigo} can be used to build any induced tiling $\mathcal{P}_{I,B}$, its adjacency graph and the induced orientations simply by taking the collection $\mathcal{B}\df\{B^\prime\subseteq I\sqcup B:\norm{B^\prime}=d+1,\detp{B^\prime}\neq 0\}$
of all affinely independent subsets of size $d+1$ of $(a_i)_{i\in I\sqcup B}$, and using for each subset $B^\prime$ the signs $\sigma_{bb^\prime}$, $\sigma_b$ and $\sigma_{b^\prime}$, $b^\prime\in B^\prime$ to construct the associated subset $I^\prime$ and form the tile $\Pi_{I^\prime,B^\prime}\in\mathcal{P}_{I,B}$. The evaluation graph for $\Pi_{I,B}$ will then contain all the tiles directly adjacent to $\Pi_{I,B}$ in $\mathcal{P}_{I,B}$. Notice that, when all auxiliary functions are taken into account, the splines of degree zero do {\em not} constitute in general a triangulation of $\ch(A)$. However, it is still possible to build search trees capable of efficiently finding all the (possibly overlapping) simplices that contain a given point $x$, for example using structures such as bounding volumes hierarchies (BVH), of which the $R$-tree and $R^\star$-tree \cite{guttman1984r,beckmann1990r} are prominent examples. We illustrate the construction of auxiliary functions and the corresponding evaluation obtained via the process outlined above in Figs~\ref{fig:splineaux} and \ref{fig:splineeval} respectively.

\begin{figure}[htbp]
    \centering
    \subfloat{\includegraphics[width=0.40\linewidth]{./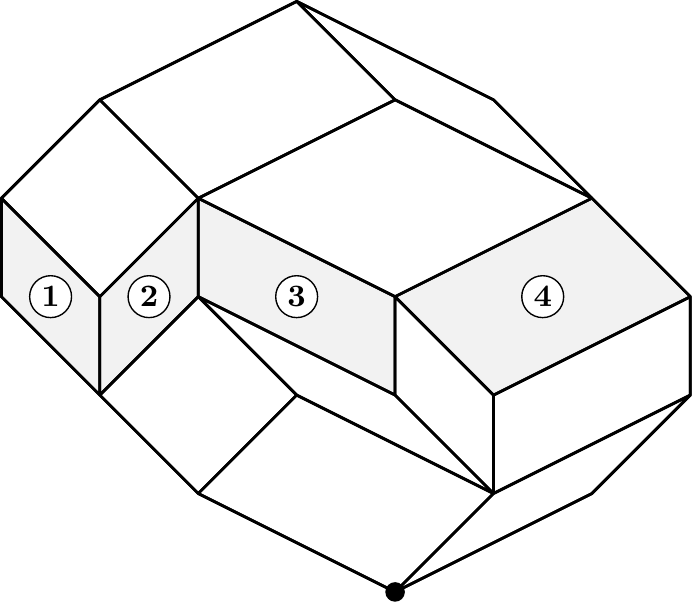}}\hspace{0.05\textwidth}%
    \subfloat{\includegraphics[width=0.40\linewidth]{./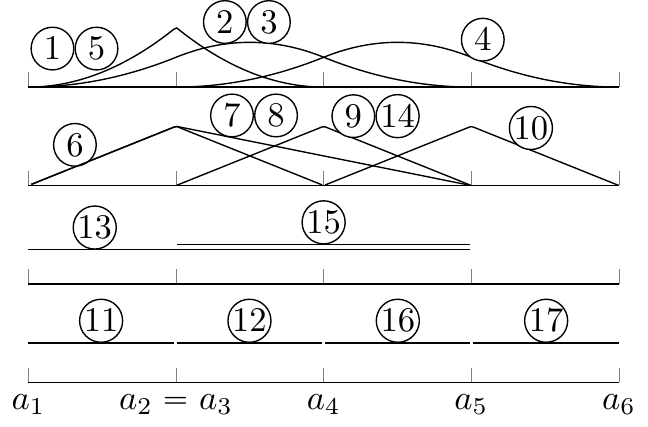}}%
   \newline
    \centering
    \subfloat{\includegraphics[width=0.20\linewidth]{./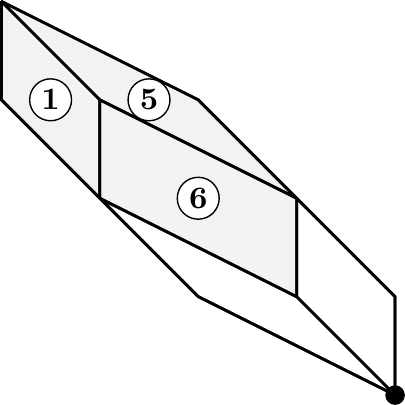}}\hspace{0.05\textwidth}%
    \subfloat{\includegraphics[width=0.20\linewidth]{./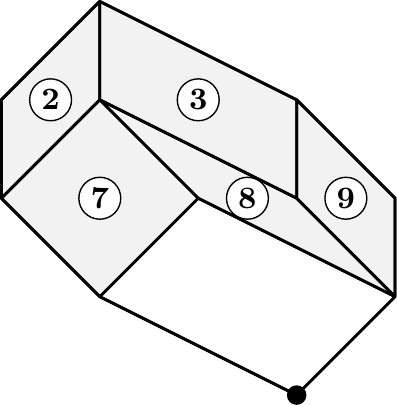}}\hspace{0.05\textwidth}%
    \subfloat{\includegraphics[width=0.20\linewidth]{./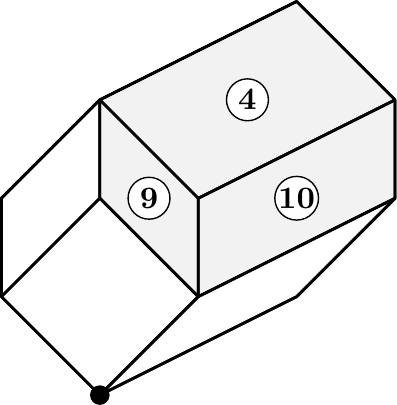}}\hspace{0.05\textwidth}%
\newline
    \centering
    \subfloat{\includegraphics[width=0.15\linewidth]{./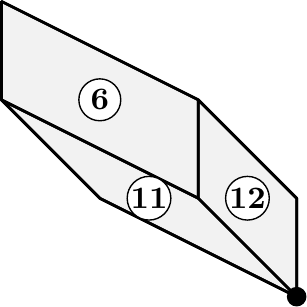}}\hspace{0.05\textwidth}%
    \subfloat{\includegraphics[width=0.20\linewidth]{./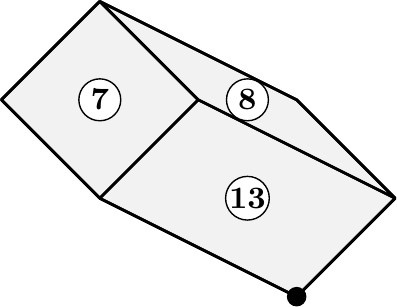}}\hspace{0.05\textwidth}%
    \subfloat{\includegraphics[width=0.10\linewidth]{./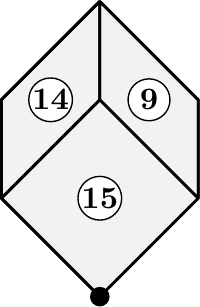}}\hspace{0.05\textwidth}%
    \subfloat{\includegraphics[width=0.15\linewidth]{./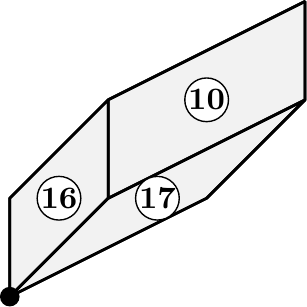}}%
    \caption{{\em Top left}: A regular fine zonotopal tiling and the associated spline space over the point configuration of Figure \ref{fig:zonobase}, with the tiles corresponding to the splines of degree $k=2$ (i.e., $\mathcal{P}^{(2)}$) highlighted and numbered from $1$ to $4$. {\em Top right}: corresponding spline functions and auxiliary functions, numbered $5$ through $17$, computed by the process of Section \ref{sec:del:eval}. {\em Bottom}: the induced zonotopal tilings $\mathcal{P}_{I,B}$ encountered during the construction of auxiliary spline functions. Highlighted tiles correspond to stored functions.}
    \label{fig:splineaux}
\end{figure}

\begin{figure}[htbp]
    \centering
   \hspace*{\fill}%
    \subfloat{\includegraphics[width=0.40\linewidth]{./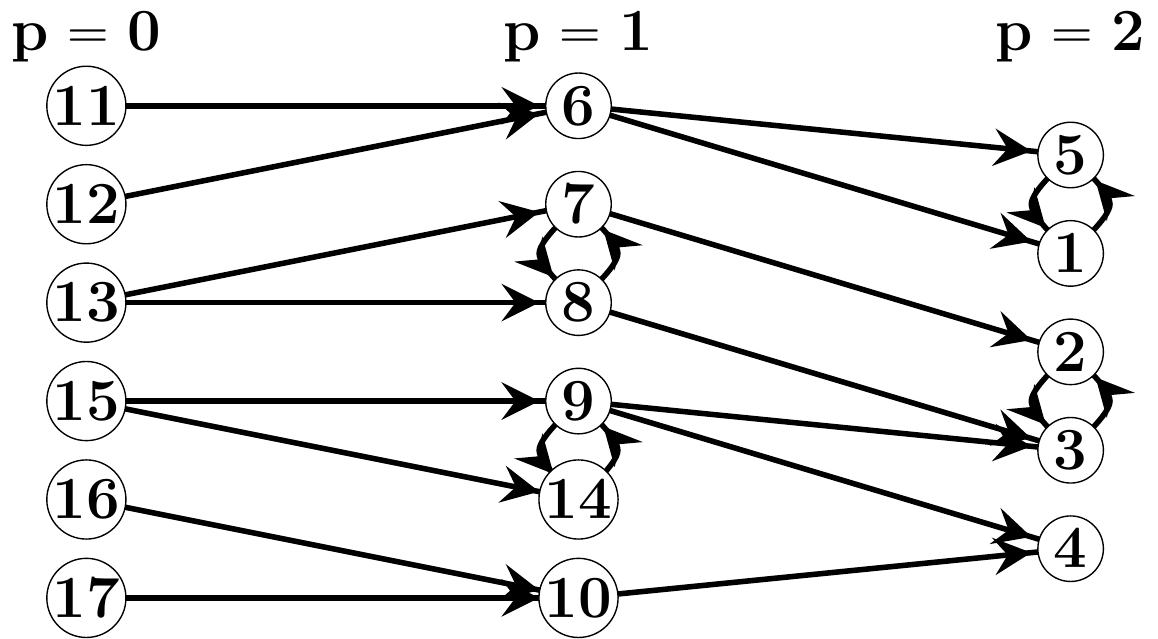}}\hspace*{\fill}%
    \newline
    \subfloat{\includegraphics[width=0.40\linewidth]{./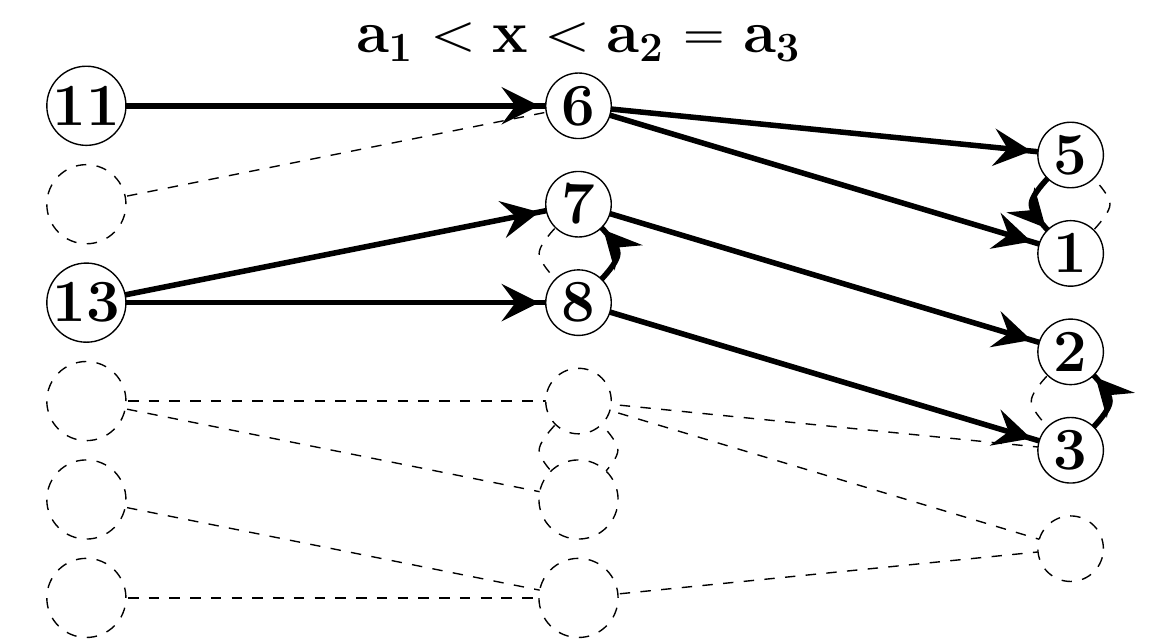}}\hspace{0.1\linewidth}%
    \subfloat{\includegraphics[width=0.40\linewidth]{./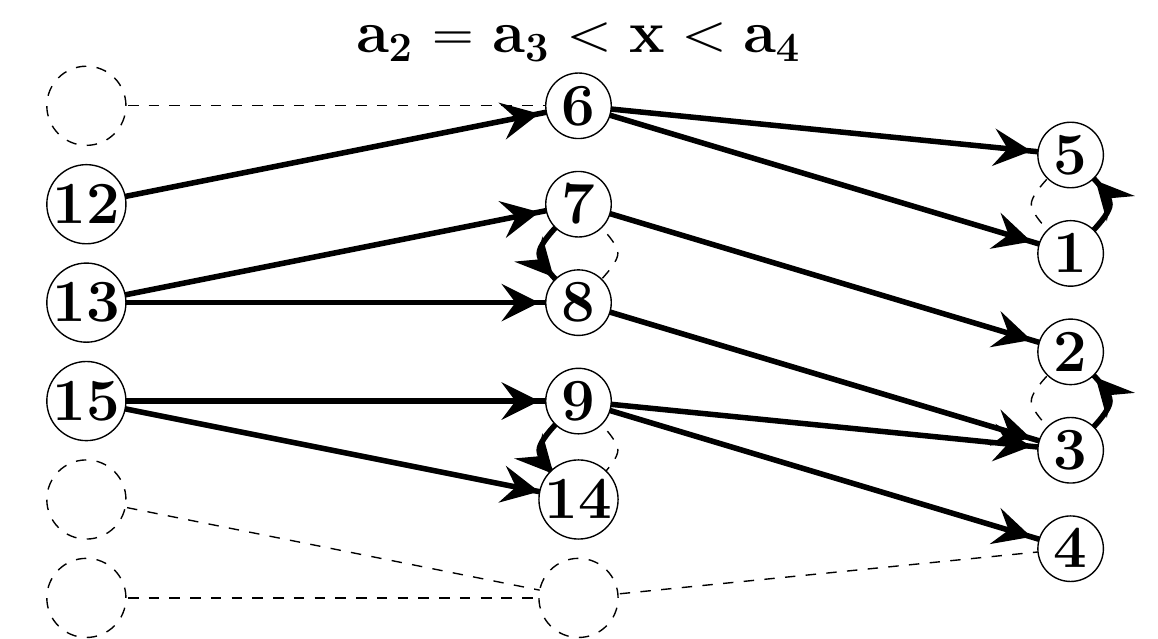}}\hfill%
    \subfloat{\includegraphics[width=0.40\linewidth]{./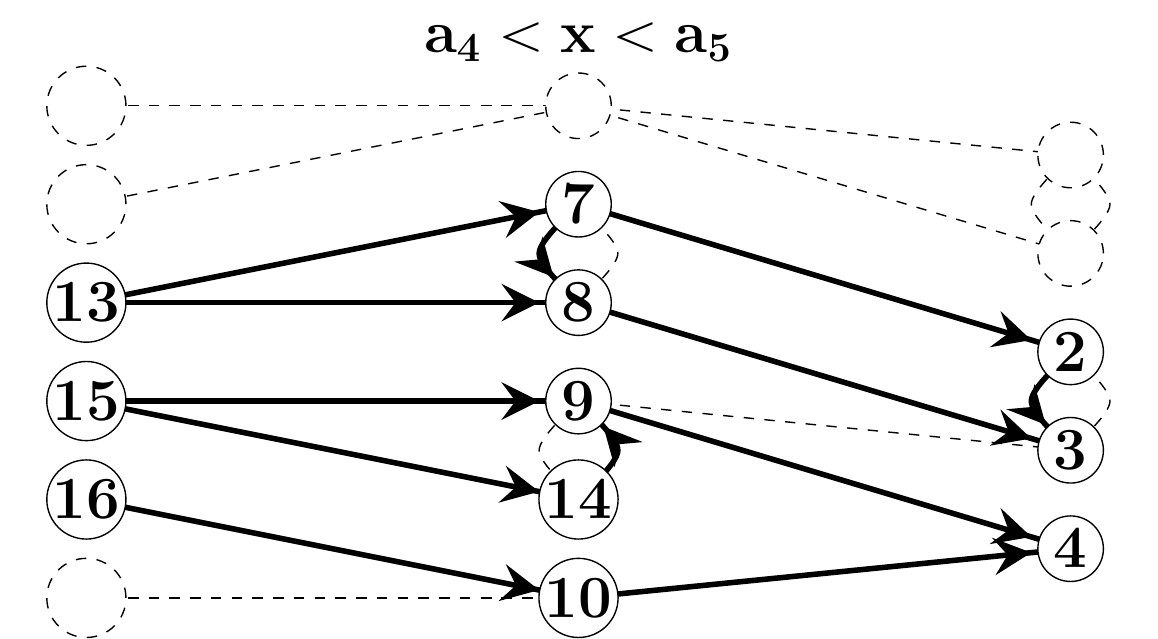}}\hspace{0.1\linewidth}%
    \subfloat{\includegraphics[width=0.40\linewidth]{./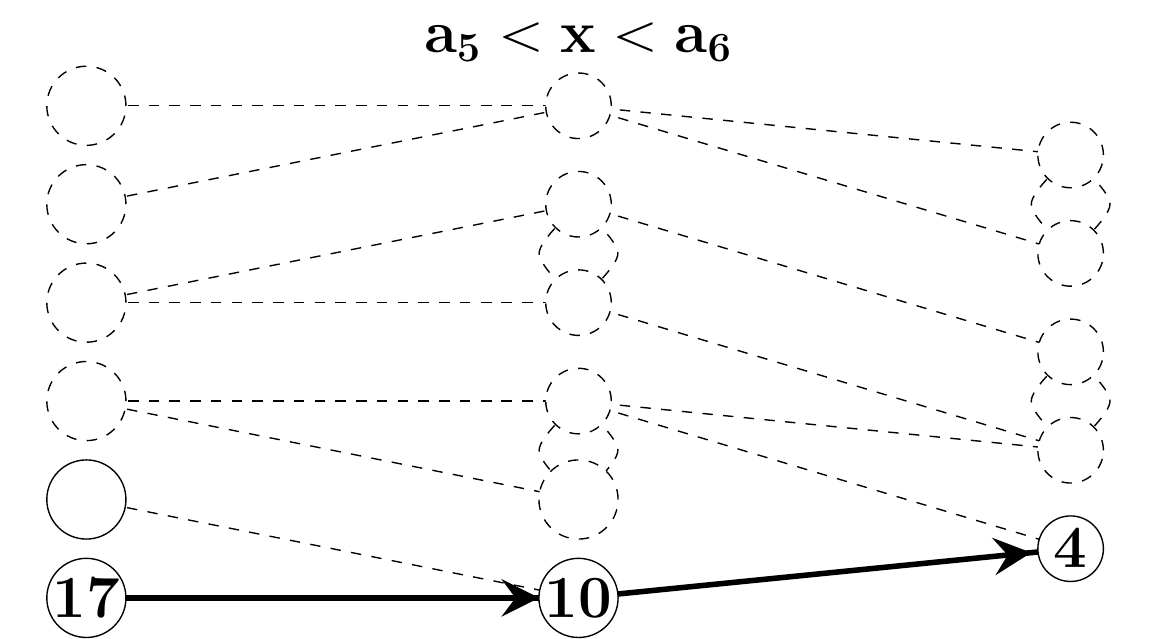}}\hfill%
    \caption{{\em Top}: the complete graph containing all the auxiliary functions obtained via the construction presented in Section \ref{sec:del:eval} in the case of the example of Figure \ref{fig:splineaux}. {\em Bottom}: The actual evaluation graph obtained when computing the value of the spline functions at different locations $x$.}
    \label{fig:splineeval}
\end{figure}

We end this section with a couple of final considerations. First, notice that it is not necessary to explicitly prove that the evaluation graph is acyclic, as this is evident from its construction. In particular, the evaluation graph for splines of order $k$ clearly generates a $k$-partite oriented graph, to which some connections between splines of the same order are added (Figure \ref{fig:splineeval}). Since the connections among this subset of tiles are the same as those in the full adjacency graph $\mathcal{G}$ of $\mathcal{P}$, no cycle can be created by the orientation $o_x$ induced by any point $x$.

Second, notice that in the special case where {\em every} point in $A$ is repeated at least $k+1$ times, the construction process of Theorems \ref{thm:constructiongen} and \ref{thm:constructiondel} yields the usual Bernstein-Bézier functions \cite{lyche2000p} over a triangulation of $\ch(A)$, and the evaluation graph reduces to the usual de Casteljau algorithm \cite{de1963courbes} over each simplex.

Finally, notice that, as can be gleaned from Figure \ref{fig:splineaux}, the procedure outlined here does not lead in general to a minimal amount of auxiliary spline functions. In particular, each tile $\Pi_{I,B}$ for which there is an index $i\in I$ such that $a_i\not\in\ch(\{a_b\}_{b\in B})$ can lead to an increased number of auxiliary functions. How often this happens is determined by the chosen height function $h$, either globally or locally in each induced tiling $\mathcal{P}_{I,B}$, and is related to the presence of {\em slivers}, i.e., simplices with skewed aspect ratios, in the associated weighted Delaunay triangulations. Some techniques exist to optimize the Delaunay height function in order to reduce the number of these elements, see e.g. \cite{edelsbrunner2002experimental,mullen2011hot}. We defer to a future work the investigation of how these techniques can help optimize the number of auxiliary functions required in the evaluation of simplex splines.

\section{Conclusions}
\label{sec:concl}
We have uncovered an interesting combinatorial structure capable of producing spaces of polynomial-reproducing multivariate (simplex) splines built atop any point configuration $A$, which ties them to the well studied fine zonotopal tilings of the associated zonotope $Z(V)$. This correspondence allows to generalize the set of known multivariate spline spaces and to adapt a known construction algorithm to a more general setting. When the tiling is regular, its adjacency graph provides a way to efficiently determine all the spline functions supported on any given point $x$, and to devise a recurrence evaluation scheme that reuses some intermediate results, thus providing a useful first step in the practical application of simplex spline bases in approximation and analysis.

Only fine zonotopal tilings have been explored in the present work. Possible connections between more general zonotopal tilings and other kinds of multivariate splines, such as box splines or more general polyhedral splines \cite{goodman1990polyhedral,de2013box2} might be possible by generalizing this restriction.

From a computational standpoint, it is possible that the correspondence uncovered in the present work can be used to obtain further optimized algorithms for multivariate splines. Two aspects in particular deserve a particular attention in our opinion. 

First, the evaluation scheme proposed in this work does not guarantee a minimal number of auxiliary functions. On the other hand, optimized weighted Delaunay triangulations coming from computer graphics applications (see e.g. \cite{edelsbrunner2002experimental,mullen2011hot}) could provide more suitable height functions, significantly improving the efficiency of the evaluation algorithm.

Second, the freedom given by the possibility of constructing spline bases over point sets with repeated knots can be exploited to build bases of splines with variable regularity and localized or arbitrarily-shaped discontinuities, with interesting applications in function approximation and numerical analysis.

\addtocontents{toc}{\protect\setcounter{tocdepth}{0}}
\section*{Acknowledgements}
This work is supported by the Inria - Total S.E. strategic action ``Depth Imaging Partnership" (\url{http://dip.inria.fr}).
\addtocontents{toc}{\protect\setcounter{tocdepth}{1}}
\printbibliography

\end{document}